\newtheorem{theorem}{Theorem}[section]
\newtheorem{lemma}[theorem]{Lemma}
\newtheorem{corollary}[theorem]{Corollary}
\theoremstyle{definition}
\newtheorem{definition}[theorem]{Definition}
\newtheorem{proposition}[theorem]{Proposition}
\newtheorem{conjecture}[theorem]{Conjecture}
\newtheorem{oproblem}[theorem]{Problem}
\begin{document}

\title[Rado Games for Equations with Radicals]{Maker-Breaker Rado games for equations with radicals}
\author[Gaiser]{Collier Gaiser}
\address[Gaiser]{Department of Mathematics, University of Denver, Denver, CO 80208}
\email{\href{mailto:collier.gaiser@du.edu}{collier.gaiser@du.edu}}
\thanks{}
\author[Horn]{Paul Horn}
\address[Horn]{Department of Mathematics, University of Denver, Denver, CO 80208}
\email{\href{mailto:paul.horn@du.edu}{paul.horn@du.edu}}
%\thanks{}

\subjclass[2020]{91A46,05D10,11D72}

\keywords{Rado games, extremal combinatorics, homogeneous equations, fractional powers}

%\date{\today}

%\dedicatory{}

\begin{abstract}
We study two-player positional games where Maker and Breaker take turns to select a previously unoccupied number in $\{1,2,\ldots,n\}$. Maker wins if the numbers selected by Maker contain a solution to the equation
\[
x_1^{1/\ell}+\cdots+x_k^{1/\ell}=y^{1/\ell}
\]
where $k$ and $\ell$ are integers with $k\geq2$ and $\ell\neq0$, and Breaker wins if they can stop Maker. Let $f(k,\ell)$ be the smallest positive integer $n$ such that Maker has a winning strategy when $x_1,\ldots,x_k$ are not necessarily distinct, and let $f^*(k,\ell)$ be the smallest positive integer $n$ such that Maker has a winning strategy when $x_1,\ldots,x_k$ are distinct. 

When $\ell\geq1$, we prove that, for all $k\geq2$, $f(k,\ell)=(k+2)^\ell$ and $f^*(k,\ell)=(k^2+3)^\ell$; when $\ell\leq-1$, we prove that $f(k,\ell)=[k+\Theta_k(1)]^{-\ell}$ and $f^*(k,\ell)=[\exp(O_k(k\log k))]^{-\ell}$. Our proofs use elementary combinatorial arguments as well as results from number theory and arithmetic Ramsey theory.
\end{abstract}
\maketitle
%\tableofcontents
\section{Introduction}\label{Section:Introduction}
Let $\mathcal{F}$ be a family of finite subsets of $\mathbb{N}:=\{1,2,\ldots\}$ and $n\in\mathbb{N}$. Maker-Breaker games played on $[n]:=\{1,2,\ldots,n\}$ with winning sets $\mathcal{F}$ are two-player positional games where Maker and Breaker take turns to select a previously unoccupied number in $[n]$. Maker goes first. Maker wins if they can occupy a set in $\mathcal{F}$ and Breaker wins otherwise. The van der Waerden games introduced by Beck \cite{Beck1981} are games of this type. In van der Waerden games, $\mathcal{F}$ is the set of $k$-term arithmetic progressions for a fixed $k$. These games were motivated by a result of van der Waerden's theorem \cite{vanderWaerden1927} which says that if $\mathbb{N}$ is partitioned into two classes, then one of them contains arbitrarily long arithmetic progressions. By the compactness principle \cite[Chapter 1]{GrahamButler2015} (see also \cite[Section 2.1]{LandmanRobertson2014}) and strategy stealing \cite[Section 5]{Beck2008} (see also \cite[Chapter 1]{HKSS2014}), Maker can win the van der Waerden games if $n$ is large enough. Therefore, one would naturally want to find the smallest $n$ such that Maker can win the van der Waerden games. Beck \cite{Beck1981} proved that, for any given $k$, the smallest $n$ such that Maker has a winning strategy for the van der Waerden games is between $2^{k-7k^{7/8}}$ and $k^32^{k-4}$.

Recently, Kusch, Ru\'{e}, Spiegel, and Szab\'{o} \cite{KRSS2019} studied a generalization of van der Waerden games called Rado games. In Rado games, $\mathcal{F}$ is the set of solutions to a system of linear equations. By Rado's theorem \cite{Rado1933}, if $n$ is large enough, then Maker is guaranteed to win the Rado games if the system of linear equations satisfies the so-called column condition \cite[Chapter 10]{GrahamButler2015}. Kusch, Ru\'{e}, Spiegel, and Szab\'{o} allowed maker to select $q\geq1$ numbers each round and derived asymptotic thresholds of $q$ for Breaker's to win. Their result on 3-term arithmetic progressions was later improved by Cao et al. \cite{CCELSXY2022}. Hancock \cite{Hancock2019} replaced $[n]$ with a random subset of $[n]$ where each number is included with probability $p$ and proved asymptotic thresholds of $p$ for Breaker or/and Maker to win. However, unlike the van der Waerden games, the smallest $n$ such that Maker wins for the unbiased and deterministic Rado games are left unstudied. 

In this paper, we study the smallest positive integer $n$ such that Maker wins the Rado games on $[n]$ when $\mathcal{F}$ is the set of solutions to the equation
\begin{equation}\label{Equation:Fractional-Coefficients1}
x_1^{1/\ell}+\cdots+x_k^{1/\ell}=y^{1/\ell}
\end{equation}
where $k$ and $\ell$ are integers with $k\geq2$ and $\ell\neq0$. \cref{Equation:Fractional-Coefficients1} is connected with results in arithmetic Ramsey theory \cite{GrahamButler2015,LandmanRobertson2014}. In arithmetic Ramsey theory, a system of equations $E(x_1,\ldots,x_k,y)=0$ in variables $x_1,\ldots,x_k,y$ is called \textit{partition regular} if whenever $\mathbb{N}$ is partitioned into a finite number of classes, one of them contains a solution to $E(x_1,\ldots,x_k,y)=0$. In 1991, Lefmann \cite{Lefmann1991} proved that, among other things, \cref{Equation:Fractional-Coefficients1} is partition regular for all $\ell\in\mathbb{Z}\backslash\{0\}$. In the same year, Brown and R\"{o}dl \cite{BrownRodl1991} proved that if a system $E(x_1,\ldots,x_k,y)=0$ of homogeneous equations is partition regular, then the system $E(1/x_1,\ldots,1/x_k,1/y)=0$ is also partition regular.

To state our results, we first define the games we study in detail. Let $A\subseteq\mathbb{N}$ be a finite set and let $e(x_1,\ldots,x_k,y)=0$ be an equation in variables $x_1,\ldots,x_k,y$. The Maker-Breaker Rado games denoted $G(A,e(x_1,\ldots,x_k,y)=0)$ and $G^*(A,e(x_1,\ldots,x_k,y)=0)$ have the following rules:
\begin{itemize}
\item[(1)] Maker and Breaker take turns to select a number from $A$. Once a number is selected by a player, neither players can select that number again. Maker starts the game.
\item[(2)] Maker wins the $G(A,e(x_1,\ldots,x_k,y)=0)$ game if a collection of the numbers chosen by Maker form a solution to $e(x_1,\ldots,x_k,y)=0$ where $x_1,\ldots,x_k$ are \textit{not} necessarily distinct; and Maker wins the $G^*(A,e(x_1,\ldots,x_k,y)=0)$ game if a collection of the numbers chosen by Maker form a solution to $e(x_1,\ldots,x_k,y)=0$ where $x_1,\ldots,x_k$ are distinct.
\item[(3)] Breaker wins if Maker fails to occupy a solution to $e(x_1,\ldots,x_k,y)=0$.
\end{itemize}
We use the following shorter notations for games with \cref{Equation:Fractional-Coefficients1}:
\[
G([n],k,\ell):=G\left([n],x_1^{1/\ell}+\cdots+x_k^{1/\ell}=y^{1/\ell}\right)
\]
and
\[
G^*([n],k,\ell):=G^*\left([n],x_1^{1/\ell}+\cdots+x_k^{1/\ell}=y^{1/\ell}\right).
\]

We say that a player wins a game if there is a winning strategy which guarantees that this player wins no matter what the other player does. A \textit{winning strategy} is a set of instructions which tells the player what to do each round given what had been previously played by both players. Let $f(k,\ell)$ be the smallest positive integer $n$ such that Maker wins the $G([n],k,\ell)$ game and let $f^*(k,\ell)$ be the smallest positive integer $n$ such that Maker wins the $G^*([n],k,\ell)$ game.

For $\ell\geq1$, we are able to find exact formulas for $f(k,\ell)$ and $f^*(k,\ell)$.

\begin{theorem}\label{Theorem:Main-PositivePower-NotDistinct}
For all integers $k\geq2$ and $\ell\geq1$, we have $f(k,\ell)=(k+2)^\ell$.
\end{theorem}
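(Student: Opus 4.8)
The plan is to reduce the game to a purely combinatorial statement about an explicitly described family of winning sets, using a classical fact from number theory, and then to analyze Maker's and Breaker's strategies on that family directly.

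\emph{Reduction.} First I would pin down the integer solutions of \eqref{Equation:Fractional-Coefficients1}: a tuple $(x_1,\dots,x_k,y)$ of positive integers is a solution if and only if there are positive integers $r,a_1,\dots,a_k$ with $x_i=ra_i^\ell$ for every $i$ and $y=r(a_1+\cdots+a_k)^\ell$. For the nontrivial direction, write $x_i=r_ia_i^\ell$ and $y=r_0b^\ell$ with each $r_i,r_0$ being $\ell$-free, so $x_i^{1/\ell}=a_ir_i^{1/\ell}$ and $y^{1/\ell}=br_0^{1/\ell}$, and \eqref{Equation:Fractional-Coefficients1} becomes a $\mathbb{Z}$-linear relation among the reals $s^{1/\ell}$ with $s$ ranging over $\ell$-free integers. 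By the classical theorem on linear independence of radicals (a Besicovitch-type result — this is the number-theoretic input the abstract alludes to), these numbers are linearly independent over $\mathbb{Q}$, so comparing coefficients forces every $r_i$ to equal $r_0=:r$ and forces $b=a_1+\cdots+a_k$. Hence the winning sets of $G(n,k,\ell)$ are exactly the sets $\{ra_1^\ell,\dots,ra_k^\ell,r(a_1+\cdots+a_k)^\ell\}$ contained in $[n]$; in particular $\{1,k^\ell\}$, $\{1,2^\ell,(k+1)^\ell\}$, and $\{1,2^\ell,(k+2)^\ell\}$ are winning sets whenever they fit inside $[n]$ (take $r=1$ with all $a_i=1$, with one $a_i=2$, and with two $a_i=2$, respectively; the last needs $k\ge 2$).

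\emph{Upper bound $f(k,\ell)\le(k+2)^\ell$.} Set $n=(k+2)^\ell$. For $k\ge 3$ I would have Maker open with $1$; since $\{1,k^\ell\}$ is a winning set in $[n]$, Breaker is forced to answer with $k^\ell$, as otherwise Maker completes $\{1,k^\ell\}$ next move. Maker then plays $2^\ell$, creating a double threat: $\{1,2^\ell,(k+1)^\ell\}$ and $\{1,2^\ell,(k+2)^\ell\}$ both lie in $[n]$, are already two-thirds owned by Maker, and have distinct missing elements $(k+1)^\ell$ and $(k+2)^\ell$, so Breaker can block only one and Maker wins on move three. The numbers $1<2^\ell<k^\ell<(k+1)^\ell<(k+2)^\ell$ are genuinely distinct precisely because $k\ge 3$; for $k=2$ one instead has Maker open with $2^\ell$ and win at once via the double threat from $\{1,2^\ell\}$ and $\{2^\ell,4^\ell\}$ (the all-ones solution scaled by $2^\ell$), which are winning sets in $[4^\ell]$ meeting only in $2^\ell$.

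\emph{Lower bound $f(k,\ell)>(k+2)^\ell-1$.} Put $N=(k+2)^\ell-1$ and use the description above to enumerate the winning sets inside $[N]$: if $\{ra_1^\ell,\dots,ra_k^\ell,rb^\ell\}\subseteq[N]$ with $b=a_1+\cdots+a_k\ge k$, then $rb^\ell\le N<(k+2)^\ell$ forces $b\le k+1$, so either $b=k$ (all $a_i=1$, a \emph{type-A} set $\{r,rk^\ell\}$ with $r<((k+2)/k)^\ell$) or $b=k+1$ (one $a_i=2$, a \emph{type-B} set $\{r,r2^\ell,r(k+1)^\ell\}$ with $r<((k+2)/(k+1))^\ell$). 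I would then have Breaker play the pairing strategy given by the pairs $(r,rk^\ell)$ and, when $k\ge 3$, also $(r2^\ell,r(k+1)^\ell)$, over all admissible $r$ (for $k=2$ every type-B set already contains a type-A set, so the first family suffices). Every type-A set contains $(r,rk^\ell)$ and every type-B set contains $(r2^\ell,r(k+1)^\ell)$, so Breaker wins as long as these pairs are pairwise disjoint; this is the technical heart, and it follows from the fact that every admissible index satisfies $r<((k+2)/k)^\ell\le(5/3)^\ell<2^\ell\le k^\ell$ when $k\ge 3$ — which separates the ``small'' coordinate of each pair-gadget from every ``large'' coordinate — together with divisibility arguments that rule out within-gadget and cross-gadget coincidences such as $rk^\ell=r'2^\ell$ or $rk^\ell=r'(k+1)^\ell$.

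\emph{Main obstacle.} The delicate part is the lower bound: one must enumerate \emph{every} solution inside $[(k+2)^\ell-1]$ — which is exactly where the structure lemma, hence the linear-independence-of-radicals input, is indispensable — and then verify that the proposed pairing genuinely consists of disjoint pairs, a claim that fails in its naive form when $k=2$ (there $2^\ell=k^\ell$) and so must be handled by splitting off the case $k=2$. The upper bound, by contrast, only requires exhibiting a few explicit solutions and checking a short forced sequence of moves, so it is the easier direction.
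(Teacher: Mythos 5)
Your proposal is correct and follows essentially the same route as the paper: the Besicovitch-type characterization of solutions (the paper's Corollary~\ref{Corollary:SolutionsFractional}), an upper bound via the forced move $k^\ell$ followed by the double threat $\{1,2^\ell,(k+1)^\ell\}$ and $\{1,2^\ell,(k+2)^\ell\}$ (with the separate $k=2$ opening at $2^\ell$), and a lower bound via a pairing strategy over the pairs $\{r,rk^\ell\}$ and $\{r2^\ell,r(k+1)^\ell\}$. The only cosmetic difference is that the paper factors the upper bound through a general transfer lemma $f(k,\ell)\le[f(k,1)]^\ell$ applied to the $\ell=1$ case, whereas you run the identical strategy directly on the $\ell$-th powers.
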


\begin{theorem}\label{Theorem:Main-PositivePower-Distinct}
For all integers $k\geq2$ and $\ell\geq1$, we have $f^*(k,\ell)=(k^2+3)^\ell$.
\end{theorem}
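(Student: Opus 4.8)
The plan is to pass to a purely additive Maker--Breaker game and then pin down its threshold exactly. The bridge is that, for $\ell\geq 1$, every solution of \cref{Equation:Fractional-Coefficients1} in positive integers $x_1,\ldots,x_k,y$ arises by fixing an $\ell$-power-free integer $d$ and positive integers $c_1,\ldots,c_k,c$ with $c_1+\cdots+c_k=c$, and setting $x_i=d\,c_i^{\ell}$ and $y=d\,c^{\ell}$; this is the lone number-theoretic input, resting on the linear independence over $\mathbb{Q}$ of the radicals $\{\sqrt[\ell]{d}:d\ \ell\text{-power-free}\}$ (equivalently, $[\mathbb{Q}(p_1^{1/\ell},\ldots,p_r^{1/\ell}):\mathbb{Q}]=\ell^{r}$ for distinct primes $p_i$). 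Write $g^{*}(k)$ for the least $m$ such that Maker wins the game on $[m]$ whose winning sets are the $(k+1)$-element sets $\{a_1,\ldots,a_k,\,a_1+\cdots+a_k\}$ with $a_1<\cdots<a_k$. For the upper bound Maker plays only inside the perfect $\ell$-th powers and ignores Breaker's other moves; as $\lfloor (g^{*}(k)^{\ell})^{1/\ell}\rfloor=g^{*}(k)$, Maker wins on $[g^{*}(k)^{\ell}]$, so $f^{*}(k,\ell)\leq g^{*}(k)^{\ell}$. For the matching lower bound, the characterization forces every winning set of $G^{*}(n,k,\ell)$ into a single ``block'' $B_d=\{d\,c^{\ell}:d\,c^{\ell}\leq n\}$, these being pairwise disjoint as $d$ runs over $\ell$-power-free integers, and on $B_d$ the game is a copy of the additive game on $[m_d]$ with $m_d=\lfloor(n/d)^{1/\ell}\rfloor\leq\lfloor n^{1/\ell}\rfloor$; if $n<g^{*}(k)^{\ell}$ then $m_d\leq g^{*}(k)-1$ for every $d$, and Breaker wins by answering, block by block, with the winning Breaker strategy for $[m_d]$. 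Hence everything reduces to proving $g^{*}(k)=k^{2}+3$.

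For $g^{*}(k)\leq k^{2}+3$ I would have Maker claim the smallest unclaimed number on each of its first $k$ moves, then one further small number, arriving at a double threat among sums just below $m=k^{2}+3$. The governing fact is that after $k$ greedy moves Maker owns $k$ distinct numbers of sum at most $k^{2}$, the extremal case being a greedy Breaker, against which Maker owns exactly $\{1,3,5,\ldots,2k-1\}$ of sum exactly $k^{2}$; Maker then threatens to claim that sum, costing Breaker a tempo, and since deleting the largest element of Maker's set shifts its realizable sum in small steps, one more Maker move yields a position with at least three distinct realizable winning sums that are $\leq m$, of which Breaker holds at most one --- a fork from which Maker wins. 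The single extra unit of room in $k^{2}+3$ over $k^{2}+2$ is exactly what upgrades Maker's last move from a lone threat (which Breaker would parry) to this fork; a short tempo count rules out Breaker having pre-empted enough sums earlier, and the finitely many small cases ($k=2,3$, where the greedy set is too short for the generic argument) are checked by hand --- e.g.\ for $k=2$, Maker plays $1$ and then a suitable $z\in\{3,\ldots,m-1\}$, so that claiming $z-1$ or $z+1$ completes $\{1,z-1,z\}$ or $\{1,z,z+1\}$.

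For $g^{*}(k)\geq k^{2}+3$ I would construct an adaptive Breaker strategy on $[k^{2}+2]$ that never lets Maker complete a winning set: Breaker divides its moves between shadowing Maker's grabs of small numbers --- so that Maker's owned numbers cannot form $k$ distinct parts of too small a sum --- and occupying the $O(1)$ candidate sums near $k^{2}+2$, always first answering whatever threat Maker has just raised, while maintaining an invariant forbidding Maker from simultaneously owning $k$ distinct numbers and their (at most $k^{2}+2$) sum. This step is the main obstacle, because no off-the-shelf tool applies: a pure pairing strategy fails already for $k=2$ on $[6]$ (no three disjoint pairs meet all six winning triples), and the Erd\H{o}s--Selfridge potential bound is useless since $[k^{2}+2]$ carries more than $2^{k}$ winning sets. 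So the bulk of the argument is a careful invariant-driven case analysis, dovetailed with the upper-bound tempo count, verifying that ``$+3$'' is simultaneously just enough for Maker while at ``$+2$'' Maker falls short by exactly one tempo.
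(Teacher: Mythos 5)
Your reduction to $\ell=1$ is exactly the paper's route: the upper bound by having Maker play on the $\ell$-th powers (this is \cref{Lemma:Linear-Radical}), and the lower bound by splitting $\{1,\dots,(k^2+3)^\ell-1\}$ into the disjoint blocks $B_d$ coming from the unique representation $x=dc^\ell$ with $d$ power-$\ell$ free, so that \cref{Corollary:SolutionsFractional} confines every winning set to one block and Breaker can simulate its $\ell=1$ strategy block by block. That part is correct and matches the paper. The problem is that the entire content of the theorem --- why the $\ell=1$ threshold is exactly $k^2+3$ --- is only sketched, and you concede as much for the lower bound.

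Concretely, two things are missing. For $f^*(k,1)\le k^2+3$, your tempo count ("at least three realizable sums $\le m$, of which Breaker holds at most one") is not justified: by mid-round $k+1$ Breaker has made $k$ moves and could in principle hold up to $k$ of the candidate $k$-sums. The actual argument is a trade-off: every move Breaker spends occupying a $k$-sum is a move not spent on small numbers, which lowers Maker's greedy elements and hence lowers the sums Maker can realize (the paper's Claims 1--3). The extreme case, where Breaker ignores $[2k+1]$ entirely and occupies all $k+1$ of the $k$-sums formed from $\{m_1,\dots,m_{k+1}\}=\{1,\dots,k+1\}$, is exactly where the constant is decided: Maker must then take $k+2$ to create the two fresh sums $\tfrac12k^2+\tfrac32k+1$ and $\tfrac12k^2+\tfrac32k+2$, and the whole scheme needs $k\ge4$ (so that every $k$-sum exceeds $2k$), with $k=2,3$ done by separate hand analyses. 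For $f^*(k,1)\ge k^2+3$ you offer only a proof strategy, not a proof. The paper's Breaker plays greedily for $k-1$ rounds, observes that any winning set omitting one of Maker's $k-1$ smallest elements already sums past $k^2+2$, and then branches on the sum $A\in\{(k-1)^2,\dots,(k-1)^2+3\}$ of Maker's elements in $[2k-2]$, finishing with one of three explicit pairing strategies on the at most three surviving candidate sums $k^2,k^2+1,k^2+2$ (paired with $2k-1,2k,2k+1$). Until that case analysis, or an equivalent invariant argument, is actually carried out, the value $k^2+3$ --- as opposed to $k^2+2$ or $k^2+4$ --- remains unproved.
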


Our proofs of \cref{Theorem:Main-PositivePower-NotDistinct,Theorem:Main-PositivePower-Distinct} involve showing that $f(k,1)=k+2$ and $f^*(k,1)=k^2+3$ using elementary combinatorial arguments, and that $f(k,\ell)\leq [f(k,1)]^\ell$ and $f^*(k,\ell)\leq [f^*(k,1)]^\ell$ using a result of Besicovitch \cite{Besicovitch1940} on the linear independence of integers with fractional powers.

For $\ell\leq-1$, our main results are the following:
\begin{theorem}\label{Theorem:Main-NegativePower-NotDistinct}
Let $k,\ell$ be integers with $\ell\leq-1$. Then $f(k,\ell)=[k+\Theta_k(1)]^{-\ell}$. More specifically, if $k\geq1/(2^{-1/\ell}-1)$, then $f(k,\ell)\geq(k+1)^{-\ell}$; and if $k\geq4$, then $f(k,\ell)\leq(k+2)^{-\ell}$.
\end{theorem}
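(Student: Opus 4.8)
The plan is to pass, via Besicovitch's theorem on the $\mathbb{Q}$-linear independence of fractional powers, from the equation with radicals to a purely combinatorial question about the ``reciprocal linear'' equation $\frac{1}{x_1}+\cdots+\frac{1}{x_k}=\frac{1}{y}$, and then to analyze that equation on short intervals by hand. Write $m=-\ell\ge1$, so the equation of $G(n,k,\ell)$ is $\sum_{i=1}^k x_i^{-1/m}=y^{-1/m}$. First I would establish the rigid shape of solutions: writing $x_i=s_it_i^m$ and $y=s_0t_0^m$ with each $s_i,s_0$ $m$-free and each $t_i,t_0$ a positive integer, one has $x_i^{-1/m}=s_i^{-1/m}t_i^{-1}$, and since $s^{-1/m}$ is a positive rational multiple of $r(s)^{1/m}$ for a bijection $s\mapsto r(s)$ of the $m$-free integers, Besicovitch's theorem forces $s_1=\cdots=s_k=s_0$ and then $\frac1{t_1}+\cdots+\frac1{t_k}=\frac1{t_0}$. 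Consequently the winning sets of $G(n,k,\ell)$ split, according to the common $m$-free part $s$, into families supported on the pairwise disjoint sets $B_s=\{s\,t^m:s\,t^m\le n\}$, and the game on $B_s$, under the identification $s\,t^m\leftrightarrow t$, is exactly the reciprocal linear game on the interval $[\,\lfloor(n/s)^{1/m}\rfloor\,]$. Since a Maker--Breaker game on a disjoint union of hypergraphs is a Maker win precisely when it is a Maker win on one of the components, both bounds reduce to understanding when Maker wins $\frac1{x_1}+\cdots+\frac1{x_k}=\frac1y$ on an interval.

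For the lower bound I take $n=(k+1)^{-\ell}-1=(k+1)^m-1$. Because $k^m\le n<(k+1)^m$, the ``principal'' set $B_1$ is a copy of $[k]$; and for $s\ge2$ the hypothesis $k\ge 1/(2^{-1/\ell}-1)$, which is exactly $2k^m\ge(k+1)^m$, makes $B_s$ a copy of an interval $[T]$ with $T\le k-1$. On an interval $[T]$ with $T\le k-1$ the equation has no solution at all: any solution satisfies $\sum_i 1/x_i=1/y\le1$, while with all $x_i\le k-1$ one gets $\sum_{i=1}^k 1/x_i\ge k/(k-1)>1$. Hence every winning set on $[(k+1)^m-1]$ lies inside $B_1\cong[k]$, and on $[k]$ the equation has the single solution $x_1=\cdots=x_k=k$, $y=1$ --- the two-element set $\{k^m,1\}$ --- which Breaker blocks by taking whichever of these two numbers Maker does not. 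This yields $f(k,\ell)\ge(k+1)^{-\ell}$, and combined with the upper bound gives $f(k,\ell)=[k+\Theta(1)]^{-\ell}$.

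For the upper bound I would have Maker confine play to the principal set $B_1=\{t^m:t\le k+2\}\subseteq[(k+2)^{-\ell}]$, which is a copy of $[k+2]$; a Maker win of the reciprocal linear game on $[k+2]$, transported to $B_1$ and with Breaker's moves outside $B_1$ simply ignored, is a Maker win of $G((k+2)^{-\ell},k,\ell)$. On $[k+2]$ one first notes (using $\sum 1/x_i\ge k/(k+2)>\tfrac12$ for $k\ge3$) that every solution has $y=1$ and $x_i\in\{2,\dots,k+2\}$ with $\sum_{i=1}^k 1/x_i=1$; the solution $x_1=\cdots=x_k=k$ gives the pair $\{1,k\}$. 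Maker plays $1$ first; if Breaker does not answer $k$, Maker plays $k$ and wins on $\{1,k\}$; if Breaker answers $k$, Maker switches to two ``secondary'' length-$k$ unit-fraction identities $\sum_i 1/t_i=1$ with all $t_i\in\{2,\dots,k+2\}\setminus\{k\}$, chosen to share one value $a$ but to differ in a single further value $b$ versus $b'$, so that after playing $a$ Maker threatens both $\{1,a,b\}$ and $\{1,a,b'\}$ and Breaker can kill only one. The step I expect to be the main obstacle is constructing these secondary identities uniformly for every $k\ge4$ (and checking the few small cases directly): one needs, for each such $k$, two $k$-term Egyptian-fraction representations of $1$ avoiding the denominator $k$ and using only denominators $\le k+2$, overlapping in all but one denominator --- for instance families of the form ``$p$ copies of $a$ and $k-p$ copies of $b$'' with $pa^{-1}+(k-p)b^{-1}=1$, the value $a$ common to both families while $b$ varies. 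Obtaining a clean description of such configurations valid for all $k\ge4$, and verifying the boundary cases, is where the real work lies; granting this combinatorial lemma about the reciprocal linear game, the reduction above assembles the two bounds.
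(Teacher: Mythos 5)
Your lower bound is complete and correct, and in fact takes a somewhat cleaner route than the paper: you decompose $[n]$ into the classes $B_s$ indexed by the $m$-free part $s$, show every winning set lives in a single class via Besicovitch (the paper only proves this structure theorem for $\ell\ge1$ and, for $\ell\le-1$, argues more directly that any solution must have $y=1$ and all $x_i$ perfect $(-\ell)$-th powers), and then observe that for $s\ge2$ the class is too short to contain any solution while $B_1\cong[k]$ carries only the pair $\{1,k^{-\ell}\}$, which Breaker blocks by pairing. Your reduction of the upper bound to the reciprocal linear game on $[k+2]$ is likewise sound and is essentially the paper's Lemma~\ref{Lemma:Linear-Radical}.

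The genuine gap is exactly the step you flag: you never construct the two ``secondary'' $k$-term Egyptian-fraction representations of $1$ with denominators in $\{2,\dots,k+2\}\setminus\{k\}$ that overlap in all but one denominator, and without them Maker has no double threat once Breaker answers $k$. This construction is the actual content of the upper bound, and it genuinely requires arithmetic input: the paper splits on the multiplicative structure of $k+1$. If $k+1=AB$ with $1<A<B$, then $\tfrac{(A-1)B}{AB}+\tfrac{B-1}{A(B-1)}=1$ and $\tfrac{A(B-1)}{AB}+\tfrac{A-1}{(A-1)B}=1$ give two $k$-term representations sharing the denominator $k+1$ and differing in $A(B-1)$ versus $(A-1)B$, so Maker even wins on $[k+1]$. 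If instead $k+1$ is a prime or the square of a prime ($\ge5$), then $k$ is even and the paper moves up to $k+2$, using $\tfrac{(k-2)/2}{k-2}+\tfrac{(k+2)/2}{k+2}=1$ and $\tfrac{2}{(k+2)/2}+\tfrac{k-2}{k+2}=1$, i.e.\ the common denominator is $k+2$ and the varying ones are $k-2$ and $(k+2)/2$. Both constructions fit your proposed template (``$p$ copies of $a$ and $k-p$ copies of $b$''), but the choice of $a,b,b'$ depends on whether $k+1$ factors nontrivially, and this case split is why the bound is $(k+2)^{-\ell}$ rather than $(k+1)^{-\ell}$. As written, your argument establishes the lower bound and the framework for the upper bound, but not the upper bound itself.
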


\begin{theorem}\label{Theorem:Main-NegativePower-Distinct}
Let $k,\ell$ be integers with $\ell\leq-1$. Then $f^*(k,\ell)=[\exp(O_k(k\log k))]^{-\ell}$.
\end{theorem}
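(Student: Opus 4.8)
The plan is to reduce the theorem to the case $\ell=-1$ and then handle that case by an explicit Maker strategy. Write $m=-\ell\ge 1$. First I would record the elementary reduction $f^*(k,\ell)\le [f^*(k,-1)]^m$: since $(u^m)^{1/\ell}=(u^m)^{-1/m}=u^{-1}$, a distinct solution $(u_1,\dots,u_k,v)$ of $u_1^{-1}+\cdots+u_k^{-1}=v^{-1}$ in $[N]$ yields the distinct solution $(u_1^m,\dots,u_k^m,v^m)$ of \cref{Equation:Fractional-Coefficients1} with exponent $\ell$ in $[N^m]$. If Maker wins $G^*(N,k,-1)$, then in $G^*(N^m,k,-m)$ she simulates a shadow copy of the $\ell=-1$ game on $[N]$: each shadow move $j$ of hers is played as $j^m$, and each Breaker move on $[N^m]$ that is not a perfect $m$th power is treated in the shadow game as a pass (a pass only helps Maker, so her shadow strategy still wins). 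The resulting shadow solution pulls back to a distinct solution of the exponent-$\ell$ equation inside Maker's set, giving the reduction. Combined with the immediate inequality $f^*(k,\ell)\ge f(k,\ell)=[k+\Theta(1)]^{-\ell}$ from \cref{Theorem:Main-NegativePower-NotDistinct} (every distinct solution is a solution), the theorem reduces to the statement $f^*(k,-1)\le \exp(O(k\log k))$.

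For the distinct unit-fraction game I would give Maker an adaptive ``chain of splittings'' built around the identity
\[
\frac1M=\frac{1}{M(2^l+1)/2^l}+\frac{1}{M(2^l+1)},\qquad 2^l\mid M,\ l\ge 1,
\]
whose two denominators are distinct and whose right-hand side has the same $2$-adic valuation as $M$. On her first move Maker claims $M_0=2^a$ with $a$ a suitable multiple of $k$ (say $a=3k$); then every ``current denominator'' $M_i$ she produces still satisfies $v_2(M_i)=a$, so each splitting step offers $a=\Theta(k)$ admissible exponents $l$ and hence $\Theta(k)$ candidate leaves $x_i^{(l)}=M_{i-1}(1+2^{-l})\in(M_{i-1},2M_{i-1}]$. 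Running $k-1$ such steps produces $\frac1{M_0}=\sum_{i=1}^{k-1}\frac1{x_i}+\frac1{M_{k-1}}$, so $M_{k-1}$ is the final denominator, which Maker also claims (as $x_k$); at that last step she chooses $l$ so that both $x_{k-1}^{(l)}$ and $M_{k-1}$ are still free, which is possible because only $O(k)$ numbers have been blocked so far. Since the candidate-leaf intervals $(M_{i-1},2M_{i-1}]$ are pairwise disjoint (the $M_i$ grow by a factor at least $3$), no candidate leaf can coincide with an earlier Maker number, so the only obstruction at step $i$ is Breaker's blocking; as Breaker makes at most $k$ moves in total, at each step Maker can take, say, the smallest unblocked $l$, and a short bookkeeping estimate gives $\sum_i l_i=O(k)$. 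Hence all of $M_0,x_1,\dots,x_{k-1},M_{k-1}$ lie below $2^{a}\prod_i(2^{l_i}+1)=2^{O(k)}$, so Maker wins $G^*(n,k,-1)$ once $n$ exceeds this bound, and $f^*(k,-1)\le 2^{O(k)}\le\exp(O(k\log k))$.

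The heart of the argument is the tension inside the splitting game between robustness and size. Robustness forces Maker to keep many legal ways to split the current denominator — that is, to keep a large power of $2$ dividing it — so that Breaker's $\le k$ blocking moves cannot close off every option; but that fixed power of $2$, multiplied by the unavoidable geometric growth over $k$ steps, must still stay below the target. The displayed identity is chosen so that $v_2$ is an exact invariant and each step multiplies the denominator by only $2^{l+1}$, with $\sum l_i=O(k)$ after the bookkeeping, which is precisely what makes the two constraints compatible; verifying this balance, together with the distinctness accounting and the ``keep the final denominator free'' step, is the one place where real care is needed. If the explicit strategy became unwieldy, I would instead show that $[n]$ carries very many distinct unit-fraction solutions with a spread-out incidence pattern (e.g.\ by multiplying one short representation $1=\sum 1/t_i$ by all admissible multipliers) and then invoke a general Maker-win criterion of Beck type, or import a quantitative version of Lefmann's partition-regularity theorem for \cref{Equation:Fractional-Coefficients1}.
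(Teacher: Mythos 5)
Your reduction from general $\ell\le-1$ to $\ell=-1$ is sound and matches the paper's \cref{Lemma:Linear-Radical} (the paper has Maker assign Breaker's non-$m$th-power moves to arbitrary unclaimed shadow numbers rather than ``passes,'' but these are equivalent). Your route to $f^*(k,-1)\le\exp(O(k\log k))$, however, is genuinely different from the paper's --- the paper goes through a game-theoretic version of Brown--R\"odl (\cref{Theorem:GameBrownRodl}), winning the \emph{linear} game $x_1+\cdots+x_k=y$ on a set $A$ with small lcm and pulling back via $x\mapsto L/x$ --- and your direct splitting construction has a genuine gap at the endgame.

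The gap is the classical single-threat problem. Your strategy has Maker build the chain $\tfrac1{M_0}=\sum_{i=1}^{k-1}\tfrac1{x_i}+\tfrac1{M_{k-1}}$ and then ``also claim $M_{k-1}$.'' But once Maker commits to $l$ at step $k-1$ and claims $x_{k-1}^{(l)}$, the unique missing element of the winning set is $M_{k-1}=M_{k-2}(2^l+1)$, and Breaker moves \emph{before} Maker's next turn, so Breaker simply takes $M_{k-1}$. Checking that ``both $x_{k-1}^{(l)}$ and $M_{k-1}$ are still free'' when $l$ is chosen does not help: switching afterwards to another admissible $l'$ is impossible because the pairs $\bigl(x_{k-1}^{(l)},M_{k-2}(2^l+1)\bigr)$ are pairwise disjoint, so the number Maker already spent on $x_{k-1}^{(l)}$ belongs to no other completion; and since $\sum_{i=1}^{k-1}1/x_i+1/u=1/M_0$ determines $u$ uniquely, no single move of Maker's ever creates two simultaneous threats in your scheme. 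This is exactly the difficulty the paper's \cref{Lemma:Power1Distinct-Upper,Lemma:Power-1Distinct} are engineered to avoid: Maker first accumulates $k+1$ (or $k+2$) small numbers via a pairing strategy, so that $k+1$ distinct $k$-subsets give $k+1$ distinct target sums, more than Breaker can block. Your construction could likely be repaired in the same spirit (e.g.\ arrange a redundant chain so that after Maker's $k$th move at least two disjoint one-move completions survive), and if so it would even improve the bound to $f^*(k,-1)\le 2^{O(k)}$, beating the paper's $\exp(O(k\log k))$; but as written the strategy loses to a Breaker who just blocks the forced last element. (A minor point: only the running denominator $M(2^l+1)$ keeps the $2$-adic valuation of $M$; the leaf $M(2^l+1)/2^l$ has valuation $v_2(M)-l$. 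This does not affect your argument, since only the running denominator needs to remain divisible by a high power of $2$.)
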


The proof of \cref{Theorem:Main-NegativePower-Distinct} involves showing that $f^*(k,-1)=\exp(O_k(k\log k))$ using a game theoretic variant of a theorem in arithmetic Ramsey theory by Brown and R\"{o}dl \cite{BrownRodl1991}.

Our results indicate that it is ``easier" to form a solution to \cref{Equation:Fractional-Coefficients1} strategically compared to their counterparts in arithmetic Ramsey theory. To illustrate this, let $R(k,\ell)$ be the smallest positive integer $n$ such that if $[n]$ is partitioned into two classes then one of them has a solution to \cref{Equation:Fractional-Coefficients1} with $x_1,\ldots,x_k$ not necessarily distinct, and let $R^*(k,\ell)$ be the smallest positive integer $n$ such that if $[n]$ is partitioned into two classes then one of them has a solution to \cref{Equation:Fractional-Coefficients1} with $x_1,\dots,x_k$ distinct. Note that if Maker and Breaker choose numbers in $[n]$, with $n\geq R(k,\ell)$ (respectively, $n\geq R^*(k,\ell)$), until there is no number left to choose, then the sets of numbers chosen by Maker and Breaker form a partition of $[n]$. If Maker does not win the game, then it means that the set of numbers chosen by Breaker contains a solution to \cref{Equation:Fractional-Coefficients1}. Since Maker goes first, by strategy stealing, Maker could follow Breaker's strategy and win the game. Therefore, we have $f(k,\ell)\leq R(k,\ell)$ and $f^*(k,\ell)\leq R^*(k,\ell)$. When $\ell\in\{-1,1\}$, some results on $R(k,\ell)$ and $R^*(k,\ell)$ are known.

For $\ell=1$, Beutelsapacher and Brestovansky \cite{BeutelspacherBrestovansky1982} proved that $R(k,1)=k^2+k-1$. The exact formula for $R^*(k,1)$ is not known, but Boza, Revuelta, and Sanz \cite{BRS2017} proved that, for $k\geq6$, $R^*(k,1)\geq(k^3+3k^2-2k)/2$. Hence, by \cref{Theorem:Main-PositivePower-NotDistinct,Theorem:Main-PositivePower-Distinct}, we have
\[
\lim_{k\to\infty}\frac{f(k,1)}{R(k,1)}=\lim_{k\to\infty}\frac{f^*(k,1)}{R^*(k,1)}=0.
\]

For $\ell=-1$, Myers and Parrish \cite{MyersParrish2018} calculated that $R(2,-1)=60$, $R(3,-1)=40$, $R(4,-1)=48$, and $R(5,-1)=39$; and the first author \cite{Gaiser2023} proved that $R(k,-1)\geq k^2$. So by \cref{Theorem:Main-NegativePower-NotDistinct}, we have
\begin{equation}\label{Equation:LimitUnitFraction}
    \lim_{k\to\infty}\frac{f(k,-1)}{R(k,-1)}=0.
\end{equation}

Unfortunately, we don't know a similar lower bound for $R^*(k,-1)$. However, we believe that Maker can still do better by selecting numbers strategically.
\begin{conjecture}
$\lim_{k\to\infty}f^*(k,-1)/R^*(k,-1)=0$.
\end{conjecture}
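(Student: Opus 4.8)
The conjecture asks that $f^*(k,-1)=o\big(R^*(k,-1)\big)$, so by strategy stealing it reduces to bounding the two‑colour Rado number $R^*(k,-1)$ from below by something growing faster than the upper bound on $f^*(k,-1)$. Since \cref{Theorem:Main-NegativePower-Distinct} gives $f^*(k,-1)\le\exp\big(O(k\log k)\big)$, it would suffice to prove the super‑exponential estimate $R^*(k,-1)=\exp\big(\omega(k\log k)\big)$ — that is, to exhibit $2$‑colourings of $[n]$ with $n$ super‑exponentially large in which neither colour class contains $k$ distinct numbers $x_1,\dots,x_k$ and a number $y$ with $1/x_1+\cdots+1/x_k=1/y$. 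The first observation is that the elementary construction behind $R(k,-1)\ge k^2$ is useless here: the threshold colouring (red on $[1,k-1]$, blue on $[k,n]$) has no monochromatic solution when $n<k^2$ — a blue solution would force $1/y=\sum_i 1/x_i\ge k/n$, hence $y<k$ — and so yields only $R^*(k,-1)\ge k^2$, far short of what is needed. Indeed even for small $k$ the size‑based obstruction evaporates: $\tfrac16+\tfrac1{10}+\tfrac1{15}=\tfrac13$ is a monochromatic blue solution once $n\ge 15$, so for $k=3$ this colouring gives nothing past $R^*(3,-1)\ge 15$. Any useful colouring must therefore exploit arithmetic structure, not size.

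I see two plausible routes. The probabilistic route is to colour $[n]$ uniformly at random: the expected number of monochromatic distinct solutions is $2^{-k}N_k(n)$, where $N_k(n)$ counts the tuples $(x_1,\dots,x_k,y)\in[n]^{k+1}$ with the $x_i$ distinct and $\sum_i 1/x_i=1/y$, so $R^*(k,-1)>n$ whenever $N_k(n)<2^k$. This converts the problem into a sharp upper bound for the number of $k$‑term distinct Egyptian‑fraction representations of reciprocals with all parts at most $n$ — a counting question squarely in the Erd\H{o}s--Graham circle of problems. The structural route is to build a colouring from the prime factorisation of $m$ — say the parity of the exponent of a fixed small prime, or a multiplicative character evaluated on the squarefree kernel — engineered so that the cleared‑denominator identity $\sum_i\big(\prod_{j\ne i}x_j\big)\,y=\prod_j x_j$ cannot hold monochromatically, using the divisibility it imposes together with the spread $y<\min_i x_i\le\max_i x_i$ and $\max_i x_i\ge ky$; one would then optimise the height $n$ over which such a colouring survives.

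The step I expect to be the genuine obstacle is obtaining \emph{any} lower bound on $R^*(k,-1)$ that beats the order of $f^*(k,-1)$: at present nothing better than the trivial polynomial bound is known, and both routes above hinge on the same poorly understood quantity — how many short distinct Egyptian‑fraction solutions live below a given height, and how rigidly their denominators are constrained. A complete proof would most likely pair a good counting bound for such representations with a colouring tailored to the shape of the smallest ones. As a fallback one could instead try to sharpen \cref{Theorem:Main-NegativePower-Distinct} on the Maker side — exploiting that Maker needs to realise only a single solution in Maker's colour whereas a Breaker colouring must destroy every solution in both colours, perhaps via a potential‑function strategy — but improving the current game‑theoretic adaptation of the Brown--R\"odl theorem appears to need a new idea as well.
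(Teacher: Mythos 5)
This statement is a \emph{conjecture} in the paper: the authors offer no proof, and they explicitly acknowledge the same bottleneck you identify, namely that no nontrivial lower bound for $R^*(k,-1)$ is known (they likewise pose finding even a nontrivial lower bound for $f^*(k,-1)$ as an open problem). Your write-up is therefore a correct diagnosis and a reasonable research plan, but it is not a proof of anything: both of your routes terminate at exactly the unresolved question, and the reduction ``it suffices to show $R^*(k,-1)=\exp(\omega(k\log k))$'' is only one sufficient condition (the ratio could also tend to $0$ because $f^*(k,-1)$ is much smaller than the upper bound of \cref{Theorem:Main-NegativePower-Distinct}, as your fallback notes). So the gap is total: the conjecture remains open after your argument, just as it does in the paper.

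One of your two proposed routes can in fact be ruled out as stated. The uniform random colouring via the first moment requires $N_k(n)<2^k$, but the equation $1/x_1+\cdots+1/x_k=1/y$ is homogeneous, so a single ``small'' base solution generates $\lfloor n/N\rfloor$ dilated copies, where $N$ is its largest entry. Such a base solution with $N=\exp(O(k))$ exists: taking $N=3\cdot 2^{k-2}$, the $k$ distinct divisors $3\cdot2^{k-3},3\cdot2^{k-4},\ldots,3,2,1$ of $N$ sum to $N$, which upon dividing by $N$ gives $1$ as a sum of $k$ distinct unit fractions with denominators at most $N$. Hence $N_k(n)\gtrsim n\,2^{-k}$, which already exceeds $2^k$ for $n\gg 4^k$ --- far below the scale $\exp(\omega(k\log k))$ you need. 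Any probabilistic construction would have to be built on the quotient by dilation (e.g.\ colouring by a completely multiplicative function or by the exponent pattern of the prime factorisation), which is really a version of your structural route; that route, and the counting of ``primitive'' short Egyptian-fraction solutions it requires, is where the genuinely new idea is needed.
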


This paper is organized as follows. We first prove some preliminary results in \cref{Section:PreliminaryResults}. The next four sections are devoted to proving \cref{Theorem:Main-PositivePower-NotDistinct,Theorem:Main-PositivePower-Distinct,Theorem:Main-NegativePower-NotDistinct,Theorem:Main-NegativePower-Distinct}. In \cref{Section:AribitraryCoefficients}, we study Rado games for linear equations with arbitrary coefficients. We discuss some future research directions in \cref{Section:Conclusion}.
\subsection{Asymptotic Notation}
We use standard asymptotic notation. For functions $f(k)$ and $g(k)$, $f(k)=O_k(g(k))$ if there exist constants $K$ and $C$ such that $|f(k)|\leq C|g(k)|$ for all $k\geq K$; $f(k)=\Omega_k(g(k))$ if there exist constants $K'$ and $c$ such that $|f(k)|\geq c|g(k)|$ for all $k\geq K'$; $f(k)=\Theta_k(g(k))$ if $f(k)=O_k(g(k))$ and $f(k)=\Omega_k(g(k))$; and $f(k)=o_k(g(k))$ if $\lim_{k\to\infty}f(k)/g(k)=0$.

We remind the readers that, throughout this paper, \textbf{we only use asymptotic notation for functions of $k$} where $\ell$ is neither a parameter nor a constant.
\section{Preliminaries}\label{Section:PreliminaryResults}
We prove some results which will be used to prove \cref{Theorem:Main-PositivePower-NotDistinct,Theorem:Main-PositivePower-Distinct,Theorem:Main-NegativePower-NotDistinct,Theorem:Main-NegativePower-Distinct}. Our first result shows that the games for equations with radicals can be partially reduced to games for equation without radicals, i.e., $\ell=1$ or $\ell=-1$.
\begin{lemma}\label{Lemma:Linear-Radical}
Let $k$ and $\ell$ be integers with $k\geq2$ and $\ell\neq0$. If $\ell\geq 1$, then
\[
f(k,\ell)\leq[f(k,1)]^\ell\text{ and }f^*(k,\ell)\leq[f(k,1)]^\ell.
\]
If $\ell\leq-1$, then
\[
f(k,\ell)\leq[f(k,-1)]^{-\ell}\text{ and }f^*(k,\ell)\leq[f(k,-1)]^{-\ell}.
\]
\end{lemma}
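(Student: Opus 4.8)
The plan is to exhibit an explicit map that lets Maker transfer a winning strategy from the $\ell=\pm1$ game on a smaller board to the general game on the $|\ell|$-th power of that board. The key observation is that if $n=f(k,1)$ (respectively $n=f(k,-1)$), then $f(k,\ell)\le n^\ell$ for $\ell\ge1$ (respectively $f(k,\ell)\le n^{-\ell}$ for $\ell\le-1$), and the same bounds hold with $f$ replaced by $f^*$ on the left, because the distinctness restriction only makes Maker's task harder, so any strategy that wins the non-distinct game on a set whose elements are genuinely distinct also wins the distinct game. Concretely, consider the injection $\varphi\colon[n]\to\mathbb{N}$ given by $\varphi(m)=m^{|\ell|}$, with image $S=\{1^{|\ell|},2^{|\ell|},\ldots,n^{|\ell|}\}\subseteq[n^{|\ell|}]$. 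A tuple $(x_1,\ldots,x_k,y)$ with every entry in $S$, say $x_i=a_i^{|\ell|}$ and $y=b^{|\ell|}$ with $a_i,b\in[n]$, satisfies $x_1^{1/\ell}+\cdots+x_k^{1/\ell}=y^{1/\ell}$ exactly when $a_1^{\operatorname{sgn}(\ell)}+\cdots+a_k^{\operatorname{sgn}(\ell)}=b^{\operatorname{sgn}(\ell)}$, i.e.\ exactly when $(a_1,\ldots,a_k,b)$ is a solution to the corresponding $\ell=1$ or $\ell=-1$ equation on $[n]$. Thus solutions inside $S$ are in bijection (via $\varphi$) with solutions of the base equation on $[n]$.

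Next I would describe the strategy transfer. Maker plays the $G(n^{|\ell|},k,\ell)$ game by running, in her head, a winning strategy $\sigma$ for the base game $G(n,k,\operatorname{sgn}(\ell))$ (which exists since $n=f(k,\operatorname{sgn}(\ell))$), using only the board $S$ and ignoring every move outside $S$. Formally: whenever it is Maker's turn, if Breaker's last move was some $m^{|\ell|}\in S$, Maker feeds $m$ to $\sigma$ as Breaker's move; if Breaker played outside $S$ (or this is the first move), Maker feeds $\sigma$ a ``pass'' — but since in a Maker-Breaker game passing only helps the opponent and $\sigma$ is defined as a response to arbitrary play histories, one cleaner bookkeeping device is: Maker simply treats any Breaker move outside $S$ as though Breaker had additionally made an arbitrary legal move in the simulated game, or equivalently restricts attention to the subgame on $S$ where Maker, moving first, can follow $\sigma$ and Breaker's moves in $S$ form a valid (possibly sparse) sequence of Breaker responses. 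Since $\sigma$ guarantees Maker occupies a solution tuple $(a_1,\ldots,a_k,b)$ in $[n]$ within the base game, Maker occupies $(a_1^{|\ell|},\ldots,a_k^{|\ell|},b^{|\ell|})$ in $S$, which by the bijection above is a genuine solution to \cref{Equation:Fractional-Coefficients1}. This shows $f(k,\ell)\le n^{|\ell|}$. For the $f^*$ bounds, note that the base winning strategy can be taken to be one for the non-distinct game $G$, yet the solution it produces, when pulled back through the injection $\varphi$, automatically has distinct $x$-coordinates whenever the base solution's $a_i$ are distinct; and in fact one checks that $f(k,1)=k+2$ and $f(k,-1)$ games admit winning strategies producing solutions with distinct $x_i$ (or, more simply, since $\varphi$ is injective, equal $a_i$ map to equal $x_i$, so distinctness is preserved and it suffices that $G$-strategies on $[n]$ with $n=f(k,1)$, when their output happens to be a distinct-variable solution, still work) — the safest route is to invoke that a $G$-winning strategy trivially certifies $f^*(k,\ell)\le f(k,\ell)$ only when solutions are distinct, so instead I would directly argue: a winning strategy for the \emph{non-distinct} base game on $[n]$ transferred as above produces a $G$-solution in $S$; since $\varphi$ is a bijection onto $S$, the non-distinctness among $x_i=a_i^{|\ell|}$ happens iff it happens among the $a_i$, so we obtain $f^*(k,\ell)\le[f^{*}(k,\operatorname{sgn}(\ell))]^{|\ell|}$, and then $f^*(k,\pm1)\le f(k,\pm1)$ would need to be false in general — hence the statement as written uses $f(k,1)$, not $f^*(k,1)$, on the right, precisely because the transferred solution may repeat $x_i$'s; so what we actually prove is that a non-distinct base solution lifts to a (possibly non-distinct) solution of the radical equation, giving $f^*(k,\ell)\le f(k,\ell)\le[f(k,\operatorname{sgn}(\ell))]^{|\ell|}$ once we know $f^*(k,\ell)\le f(k,\ell)$ — which is immediate since every distinct solution is a solution.

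The main obstacle, and the only place requiring care, is the bookkeeping for Breaker's moves that fall outside $S$: one must make sure that ignoring them does not corrupt the simulated strategy $\sigma$. I expect to handle this with the standard observation that in Maker-Breaker games extra Breaker moves never hurt Breaker, so any winning strategy for Maker on the board $[n]$ remains winning if Breaker is additionally allowed to ``waste'' moves — hence Maker may legitimately phantom-fill $\sigma$'s record of Breaker's play with arbitrary available elements of $[n]$ to match the real-time pace, and $\sigma$ still delivers a Maker solution in $[n]$, which lifts to $S$. The remaining verification — that the algebraic equivalence $x_i=a_i^{|\ell|},\,y=b^{|\ell|}$ with $\sum x_i^{1/\ell}=y^{1/\ell}$ is equivalent to $\sum a_i^{\operatorname{sgn}(\ell)}=b^{\operatorname{sgn}(\ell)}$ — is a one-line computation using $\bigl(a^{|\ell|}\bigr)^{1/\ell}=a^{|\ell|/\ell}=a^{\operatorname{sgn}(\ell)}$, valid since $a$ is a positive integer so no branch-of-root ambiguity arises. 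No number-theoretic input (Besicovitch) is needed here; that enters only in the matching lower-bound direction, which is not part of this lemma.
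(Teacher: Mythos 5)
Your transfer argument for $f(k,\ell)$ is exactly the paper's proof: simulate a winning strategy for the base game $G(f(k,\operatorname{sgn}(\ell)),k,\operatorname{sgn}(\ell))$ on the image of $m\mapsto m^{|\ell|}$, and whenever Breaker plays off that image, feed the simulated strategy an arbitrary unused element of the base board (the paper's $b_i'$). That part is correct and complete, including the observation that extra Breaker moves cannot hurt Maker.

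The trouble is in your handling of the $f^*$ inequalities, where you talk yourself out of the right argument and into a wrong one. Two points. First, the lemma as printed contains a typo: the intended right-hand sides are $[f^*(k,1)]^\ell$ and $[f^*(k,-1)]^{-\ell}$, as one sees from how the lemma is invoked later (``$f^*(k,\ell)\leq[f^*(k,1)]^\ell=(k^2+3)^\ell$'' and ``$f^*(k,\ell)\leq[f^*(k,-1)]^{-\ell}$''); the literal statement with $f(k,1)$ on the right is false already at $\ell=1$, since it would read $k^2+3\le k+2$. Your first derivation --- run the same simulation starting from a winning strategy for the \emph{distinct} base game, and note that because $m\mapsto m^{|\ell|}$ is injective the lifted solution has distinct $x_i$ exactly when the base solution has distinct $a_i$ --- gives precisely $f^*(k,\ell)\le[f^*(k,\operatorname{sgn}(\ell))]^{|\ell|}$, which is the correct and intended claim. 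You should have stopped there. Second, your closing attempt to match the printed statement rests on ``$f^*(k,\ell)\le f(k,\ell)$, which is immediate since every distinct solution is a solution.'' That implication runs the other way: because every distinct solution is a solution, winning the distinct game is \emph{harder} for Maker, so $f(k,\ell)\le f^*(k,\ell)$ (compare $f(k,1)=k+2$ with $f^*(k,1)=k^2+3$). Delete that chain, keep your injectivity argument, and state the conclusion with $f^*$ on both sides.
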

\begin{proof}
We prove that if $\ell\geq1$, then $f(k,\ell)\leq[f(k,1)]^\ell$. The other inequalities can be proved similarly.

Write $M=f(k,1)$ and let $\mathcal{M}$ be a Maker's winning strategy for the $G([M],k,1)$ game. Notice that if $(x_1,\ldots,x_k,y)=(a_1,\ldots,a_k,b)$ is a solution to $x_1+\cdots+x_k=y$, then $(x_1,\ldots,x_k,y)=(a_1^\ell,\ldots,a_k^\ell,b^\ell)$ is a solution to $x_1^{1/\ell}+\cdots+x_k^{1/\ell}=y^{1/\ell}$.

For $i=1,2,\ldots$, let $m_i\in[M^\ell]$ be the number chosen by Maker and let $b_i\in[M^\ell]$ be the number chosen by Breaker in round $i$. We define a strategy for Maker recursively. We note that Maker focuses on the set $\{1^\ell,2^\ell,\ldots,M^\ell\}$ in this strategy. In round 1, if $\mathcal{M}$ tells Maker to choose $a_1$ for the $G([M],k,1)$ game, then set $m_1=a_1^\ell$. If $b_1=z_1^\ell$ for some $z_1\in[M]$, then set $b_1'=z_1$; otherwise, arbitrarily set $b_1'$ equal to some number in $M\backslash\{a_1\}$. In round $i\geq2$, given $a_1,a_2,\ldots,a_{i-1},b_1',b_2',\ldots,b_{i-1}'$, if $\mathcal{M}$ tells Maker to choose $a_i$, then set $m_i=a_i$. This is possible because $\mathcal{M}$ is a winning strategy. If $b_i=z_i^\ell$ for some $z_i\in[M]$, then set $b_i'=z_i$; otherwise, arbitrarily set $b_i'$ equal to some number in $M\backslash\{a_1,a_2,\ldots,a_{i-1},a_i,b_1',b_2',\ldots,b_{i-1}'\}$.

Now since $\mathcal{M}$ is a winning strategy, there exists $t$ such that $\{a_1,a_2,\ldots,a_t\}$ has a solution to $x_1+\cdots+x_k=y$. Hence $\{m_1,m_2,\ldots,m_t\}=\{a_1^\ell,a_2^\ell,\ldots,a_t^\ell\}$ has a solution to $x_1^{1/\ell}+\cdots+x_k^{1/\ell}=y^{1/\ell}$. Therefore, Maker wins the $G([M^\ell],k,\ell)$ game.
\end{proof}

\cref{Theorem:Main-PositivePower-NotDistinct,Theorem:Main-PositivePower-Distinct} indicate that these inequalities in \cref{Lemma:Linear-Radical} are actually equalities when $\ell\geq 2$. This is due to a result of Besicovitch \cite{Besicovitch1940}. To state this result, we first need the following definition.

\begin{definition}
Let $a\in\mathbb{N}\backslash\{1\}$. We say that $a$ is \textit{power-$\ell$ free} if $a=b^\ell c$, with $b,c\in\mathbb{N}$, implies $b=1$. 
\end{definition}

\begin{theorem}[Besicovitch \cite{Besicovitch1940}]\label{Theorem:LinearIndependence}
For all positive integers $\ell\geq2$, the set
\[
A(\ell):=\{a^{1/\ell}:a\in\mathbb{N}\backslash\{1\}\text{ and }a \text{ is power-}\ell \text{ free}\}
\]
is linearly independent over $\mathbb{Z}$. That is, if $a_1,\ldots,a_m\in A(\ell)$ and $c_1,\ldots,c_m\in\mathbb{N}$ satisfy $c_1a_1+\cdots+c_ma_m=0$, then $c_1=\cdots=c_m=0$.
\end{theorem}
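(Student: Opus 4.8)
The plan is to establish the equivalent statement that $A(\ell)$ is linearly independent over $\mathbb{Q}$ — which is what linear independence over $\mathbb{Z}$ means for a set of real numbers, since any rational relation clears denominators to an integer one. Given $a_1,\dots,a_m\in A(\ell)$, let $p_1,\dots,p_r$ be the primes dividing $a_1\cdots a_m$. Since each $a_i$ is power-$\ell$ free, no prime divides any $a_i$ to an exponent $\geq\ell$, so $a_i=\prod_{j=1}^r p_j^{f_{ij}}$ with every $f_{ij}\in\{0,1,\dots,\ell-1\}$; hence $a_i^{1/\ell}=\prod_{j=1}^r p_j^{f_{ij}/\ell}$, and distinct $a_i$ yield distinct exponent vectors $(f_{i1},\dots,f_{ir})$. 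It therefore suffices to prove that the $\ell^r$ ``monomials'' $\prod_{j=1}^r p_j^{f_j/\ell}$, indexed by $(f_1,\dots,f_r)\in\{0,\dots,\ell-1\}^r$, are linearly independent over $\mathbb{Q}$. These monomials span $L:=\mathbb{Q}(p_1^{1/\ell},\dots,p_r^{1/\ell})$ over $\mathbb{Q}$: their $\mathbb{Q}$-span is closed under multiplication (reduce exponents modulo $\ell$, pulling out integer factors), contains $\mathbb{Q}$ and every generator, and is a finite-dimensional domain, hence a field. So $\dim_{\mathbb{Q}}L\leq\ell^r$, with equality exactly when the monomials form a basis, i.e.\ are linearly independent. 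The theorem thus reduces to the degree formula $[\mathbb{Q}(p_1^{1/\ell},\dots,p_r^{1/\ell}):\mathbb{Q}]=\ell^r$ for distinct primes $p_1,\dots,p_r$.

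I would prove this degree formula by induction on $r$, the case $r=0$ being trivial. Writing $L_r:=\mathbb{Q}(p_1^{1/\ell},\dots,p_r^{1/\ell})$, of degree $\ell^r$ by hypothesis, the inductive step requires $[L_r(p_{r+1}^{1/\ell}):L_r]=\ell$, i.e.\ that $x^\ell-p_{r+1}$ is irreducible over $L_r$. Here I would use that $L_r\subseteq\mathbb{R}$: if the minimal polynomial of the positive real root $p_{r+1}^{1/\ell}$ over $L_r$ has degree $d$, then over $\mathbb{C}$ it is a product of $d$ of the factors $x-\zeta^k p_{r+1}^{1/\ell}$ (with $\zeta$ a primitive $\ell$-th root of unity), so its constant term has absolute value $p_{r+1}^{d/\ell}$ and lies in $L_r\subseteq\mathbb{R}$; hence $p_{r+1}^{d/\ell}\in L_r$. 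Since also $p_{r+1}=p_{r+1}^{\ell/\ell}\in L_r$, a Bézout computation gives $p_{r+1}^{\gcd(d,\ell)/\ell}\in L_r$, so if $\gcd(d,\ell)<\ell$ then $p_{r+1}^{1/q}\in L_r$ for some prime $q\mid\ell$. Thus it is enough to prove $p_{r+1}^{1/q}\notin L_r$ for every prime $q\mid\ell$, which forces $d=\ell$ and irreducibility. (Because $L_r$ is real, no element has negative fourth power, so the usual $4\mid n$ caveat in the irreducibility test for $x^n-a$ plays no role.)

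It remains to prove $p_{r+1}^{1/q}\notin L_r$ for each prime $q\mid\ell$. When $p_{r+1}\nmid\ell$ this is straightforward: $p_1,\dots,p_r$ are units at $p_{r+1}$ and $p_{r+1}$ is unramified in $L_r$, so for any prime $\mathfrak{P}$ of $L_r$ over $p_{r+1}$ we have $v_{\mathfrak{P}}(p_{r+1})=1$, and $p_{r+1}=\beta^q$ with $\beta\in L_r$ would give $q\mid 1$. The case $p_{r+1}\mid\ell$ is the crux of the whole argument: now $p_{r+1}$ may ramify in $L_r$, the $p_{r+1}$-adic valuation no longer obstructs $p_{r+1}$ from being a $q$-th power (for instance $2$ is not a square in $\mathbb{Q}(\sqrt{3})$, yet its $2$-adic valuation there is even), and one must argue more carefully. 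I would handle it by Besicovitch's original elementary induction, which expands a hypothetical $\beta$ with $\beta^q=p_{r+1}$ in the monomial basis of $L_r$ supplied by the inductive hypothesis and extracts a contradiction from the uniqueness of that expansion; an alternative is to adjoin a primitive $\ell$-th root of unity and invoke Kummer theory, reducing the claim to the statement that no nontrivial product $\prod_i p_i^{c_i}$ with $0\le c_i<\ell$ is an $\ell$-th power in $\mathbb{Q}(\zeta_\ell)^\times$ — where the primes $p_i\nmid\ell$ are dispatched by ideal factorization and only the finitely many primes dividing $\ell$ require real work. This ramification-at-divisors-of-$\ell$ step is where essentially all of the difficulty lies; everything else is bookkeeping.
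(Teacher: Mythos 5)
First, a point of comparison that matters here: the paper does not prove this statement at all. It is quoted as Besicovitch's theorem, used as a black box, with the remark that Besicovitch's original proof is elementary and that Richards later gave a Galois-theoretic proof. So there is no in-paper argument to measure you against; your proposal has to stand on its own, and it follows the Richards/Mordell field-degree route rather than Besicovitch's elementary induction.

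Your reductions are correct as far as they go: passing to the $\ell^r$ monomials $\prod_j p_j^{f_j/\ell}$, the equivalence of their linear independence with the degree formula $[\mathbb{Q}(p_1^{1/\ell},\dots,p_r^{1/\ell}):\mathbb{Q}]=\ell^r$, the constant-term/B\'ezout argument showing that reducibility of $x^\ell-p_{r+1}$ over the real field $L_r$ forces $p_{r+1}^{1/q}\in L_r$ for some prime $q\mid\ell$, and the valuation argument when $p_{r+1}\nmid\ell$. The genuine gap is that you never prove the case $p_{r+1}\mid\ell$: you correctly identify it as the crux and then only name two possible strategies (Besicovitch's expansion in the monomial basis, or Kummer theory over $\mathbb{Q}(\zeta_\ell)$) without executing either. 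This case cannot be waved away: for $\ell=6$, for instance, the prime $3$ is totally ramified in $\mathbb{Q}(2^{1/3})\subseteq\mathbb{Q}(2^{1/6})$, so $v_{\mathfrak{P}}(3)$ is divisible by $3$ and the valuation obstruction you rely on elsewhere vanishes exactly where it is needed; this is precisely where Besicovitch's paper does its real work. As written, the proposal is an accurate roadmap whose load-bearing step is a pointer to the literature rather than an argument, so it does not yet constitute a proof.
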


Besicovitch \cite{Besicovitch1940} actually provided an elementary proof of a stronger result, but \cref{Theorem:LinearIndependence} is enough for our purposes. For interested readers, we note that Richards \cite{Richards1974} proved a similar result to the one in \cite{Besicovitch1940}, but using Galois theory instead. A direct consequence of \cref{Theorem:LinearIndependence} is the following result which will be used in proving \cref{Theorem:Main-PositivePower-NotDistinct,Theorem:Main-PositivePower-Distinct}.

\begin{corollary}\label{Corollary:SolutionsFractional}
Let $k\geq2$ and $\ell\geq1$ be integers. The solutions to $x_1^{1/\ell}+\cdots+x_k^{1/\ell}=y^{1/\ell}$ are of the form $(x_1,\ldots,x_k,y)=(ca_1^\ell,\ldots,ca_k^\ell,cb^\ell)$ where $a_1,\ldots,a_k,b,c\in\mathbb{N}$, $a_1+\cdots+a_k=b$, and $c$ is power-$\ell$ free.
\end{corollary}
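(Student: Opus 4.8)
The plan is to prove both inclusions. The direction ``every tuple of the stated form is a solution'' is immediate and already appears (for $c=1$) inside the proof of \cref{Lemma:Linear-Radical}: if $a_1+\cdots+a_k=b$ and $c\in\mathbb{N}$, then $\sum_{i=1}^k (ca_i^\ell)^{1/\ell}=c^{1/\ell}\sum_{i=1}^k a_i=c^{1/\ell}b=(cb^\ell)^{1/\ell}$, so in fact no hypothesis on $c$ is needed for this implication. I would also dispose of the case $\ell=1$ at once (there every solution has the stated form with $c=1$), so that the substance of the argument concerns $\ell\geq2$, where \cref{Theorem:LinearIndependence} is available.

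For the converse, let $(x_1,\ldots,x_k,y)$ be a solution with $\ell\geq2$. First I record the standard fact that every positive integer $m$ has a representation $m=c\,a^\ell$ with $a\in\mathbb{N}$ and $c$ power-$\ell$ free (allowing $c=1$): take $a$ maximal with $a^\ell\mid m$. Applying this, write $x_i=c_i a_i^\ell$ and $y=c_0 b^\ell$. Since then $x_i^{1/\ell}=a_i c_i^{1/\ell}$ and $y^{1/\ell}=b\,c_0^{1/\ell}$, the equation becomes $\sum_{i=1}^k a_i c_i^{1/\ell}=b\,c_0^{1/\ell}$.

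Next I group the left-hand side according to the value of the radical part: for each distinct value $d$ among $c_0,c_1,\ldots,c_k$, collect the terms with $c_i=d$, so the relation reads $\sum_d \bigl(\sum_{i:\,c_i=d} a_i\bigr)d^{1/\ell}=b\,c_0^{1/\ell}$. Moving everything to one side produces a $\mathbb{Z}$-linear relation among distinct elements of $A(\ell)\cup\{1\}$. By the $\mathbb{Z}$-linear independence of this set (which follows from the stronger form of Besicovitch's theorem referenced above, or, given \cref{Theorem:LinearIndependence}, from the elementary fact that $1$ lies outside the $\mathbb{Q}$-span of $A(\ell)$), every coefficient vanishes. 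For any value $d\neq c_0$ the corresponding coefficient is $\sum_{i:\,c_i=d} a_i$, a sum of positive integers, hence nonzero; so no such $d$ occurs, i.e.\ $c_1=\cdots=c_k=c_0=:c$. Writing $c$ for this common (power-$\ell$ free) value, the one remaining coefficient equation is $a_1+\cdots+a_k-b=0$, which is exactly the assertion.

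The only genuine subtlety is the possibility that some $x_i$ or $y$ is a perfect $\ell$-th power, in which case its radical part is $1=1^{1/\ell}$; this is precisely why one needs linear independence of $A(\ell)\cup\{1\}$ rather than of $A(\ell)$ alone, and it is the single point at which the bare statement of \cref{Theorem:LinearIndependence} must be supplemented. Everything else is routine bookkeeping with the canonical decomposition $m=c\,a^\ell$.
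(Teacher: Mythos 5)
Your proof is correct and follows essentially the same route as the paper's: decompose each integer canonically as a power-$\ell$-free part times an $\ell$-th power, group terms by radical part, and invoke Besicovitch's linear independence to force all radical parts to coincide, leaving $a_1+\cdots+a_k=b$. The one point where you go beyond the paper's own argument is your explicit handling of terms whose radical part is $1$ (i.e.\ perfect $\ell$-th powers), which requires independence of $A(\ell)\cup\{1\}$ rather than of $A(\ell)$ alone; this is a genuine gap that the paper's proof silently elides, and your appeal to the stronger form of Besicovitch's theorem is the right fix (your alternative claim that $1\notin\mathbb{Q}\text{-span}(A(\ell))$ is ``elementary'' given only \cref{Theorem:LinearIndependence} is the one assertion you should not leave unjustified).
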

\begin{proof}
Suppose that $\alpha_1,\ldots,\alpha_k,\beta\in\mathbb{N}$ satisfy
\[
\alpha_1^{1/\ell}+\cdots+\alpha_k^{1/\ell}=\beta^{1/\ell}.
\]
We write $\alpha_i=c_ia_i^\ell$ for all $i=1,...,k$, and $\beta=db^\ell$ where $a_1,\ldots,a_k,c_1,\ldots,c_k,b,d\in\mathbb{N}$ and $c_1,\ldots,c_k,d$ are power-$\ell$ free. Then we have
\begin{equation}\label{Equation:AFractionalSolution}
a_1c_1^{1/\ell}+\cdots+a_kc_k^{1/\ell}-bd^{1/\ell}=0.
\end{equation}

We first show that $c_1=\cdots=c_k=d$. Suppose, for a contradiction, that $c_1,\ldots,c_k,d$ are not all the same. We split this into two cases.

Case 1: $d\neq c_i$ for all $i\in[k]$. After combining terms with same $\ell$-th roots, the left-hand side of \cref{Equation:AFractionalSolution} has at least two terms where one of them is $-bd^{1/\ell}$. Now by \cref{Theorem:LinearIndependence}, $b=0$ which is a contradiction.

Case 2: $d=c_i$ for some $i\in[k]$. Then there exists $j\in[k]\backslash\{i\}$ such that $c_j\neq c_i$. After combining terms with same $\ell$-th roots, the left-hand side of \cref{Equation:AFractionalSolution} has a term with $c_j^{1/\ell}$. This is because all the terms with $c_j^{1/\ell}$ contain only positive coefficients. By \cref{Theorem:LinearIndependence}, the coefficient of $c_j^{1/\ell}$ is zero after combining like terms. But this is impossible because the coefficient of $c_j^{1/\ell}$ is the sum of a subset of $\{a_1,...,a_k\}$ consisting only positive integers.

Hence we have $c_1=\cdots=c_k=d$. Therefore, $a_1+\cdots+a_k=b$.
\end{proof}
We note that Newman \cite{Newman1981} proved \cref{Corollary:SolutionsFractional} for the case $k=2$ without using \cref{Theorem:LinearIndependence}.

Next, we prove a game theoretic variant of a result by Brown and R\"{o}dl \cite[Theorem 2.1]{BrownRodl1991}. We note that an equation $e(x_1,\ldots,x_k,y)=0$ is \textit{homogeneous} if whenever $(x_1,\ldots,x_k,y)=(a_1,\ldots,a_k,b)$ is a solution to $e(x_1,\ldots,x_k,y)=0$, for all $m\in\mathbb{N}$, $(x_1,\ldots,x_k,y)=(ma_1,\ldots,ma_k,mb)$ is a also a solution to $e(x_1,\ldots,x_k,y)=0$.

\begin{theorem}\label{Theorem:GameBrownRodl}
Let $A$ be a finite subset of $\mathbb{N}$, $L$ the least common multiple of $A$, $k\in\mathbb{N}$, and $e(x_1,\ldots,x_k,y)=0$ a homogeneous equation. If Maker wins the $G(A,e(x_1,\ldots,x_k,y)=0)$ game, then Maker wins the $G([L],e(1/x_1,\ldots,1/x_k,1/y)=0)$ game. Similarly, if Maker wins the \\$G^*(A,e(x_1,\ldots,x_k,y)=0)$ game, then Maker wins the $G^*([L],e(1/x_1,\ldots,1/x_k,1/y)=0)$ game.
\end{theorem}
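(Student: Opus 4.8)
The plan is to transport a winning strategy for the first game onto the board $[L]$ via the correspondence $a\mapsto L/a$. Write $\phi\colon A\to[L]$ for the map $\phi(a)=L/a$; since every $a\in A$ divides $L$ and $a\ge 1$, this is a well-defined injection, with image $U:=\{L/a:a\in A\}\subseteq[L]$. The reason for this choice is that $\phi$ carries winning sets to winning sets: because $e$ is homogeneous, $(a_1,\ldots,a_k,b)$ is a solution of $e(x_1,\ldots,x_k,y)=0$ if and only if $(a_1/L,\ldots,a_k/L,b/L)$ is (rescale all entries), and the latter says exactly that $\bigl(\phi(a_1),\ldots,\phi(a_k),\phi(b)\bigr)$ is a solution of $e(1/x_1,\ldots,1/x_k,1/y)=0$. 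Since $\phi$ is injective, it also sends solutions with distinct $x_i$ to solutions with distinct $x_i$, which is what we need for the $G^*$ statement.

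Now let $\mathcal{M}$ be a winning strategy for Maker in $G(A,e(x_1,\ldots,x_k,y)=0)$. Maker plays $G(L,e(1/x_1,\ldots,1/x_k,1/y)=0)$ while running a private simulated instance of $G(A,e(x_1,\ldots,x_k,y)=0)$ in which Maker follows $\mathcal{M}$. Whenever $\mathcal{M}$ tells Maker to claim $a\in A$ in the simulation, Maker claims $\phi(a)=L/a$ on the real board. Whenever Breaker claims a cell $c$ on the real board, Maker updates the simulation: if $c=\phi(a)\in U$, simulated Breaker claims $a$; if $c\notin U$, simulated Breaker claims an arbitrary cell of $A$ still free in the simulation (a ``phantom'' move). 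Each round then produces exactly one simulated move for each player, so the simulation is a genuine play of $G(A,e(\ldots)=0)$; one checks the invariants that the cells Maker has claimed in the simulation are precisely the $\phi$-preimages of the cells Maker has claimed in $[L]$, and that $\phi(a)$ is still free on the real board whenever $\mathcal{M}$ asks Maker to play $a$. Because $\mathcal{M}$ is winning, at some round simulated Maker owns a solution $(a_1,\ldots,a_k,b)$ of $e=0$ (with distinct $a_i$ in the $G^*$ case), and then, by the first paragraph, Maker owns the solution $(L/a_1,\ldots,L/a_k,L/b)$ of $e(1/x_1,\ldots,1/x_k,1/y)=0$ on $[L]$. (Since $|A|\le L$, the simulated board cannot fill up strictly before the real one; and if it fills up, Maker has already won it.)

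The point that needs care — and the step I would write out most carefully — is the bookkeeping for the phantom moves, which is what keeps the second invariant valid. The hazard is that simulated Breaker places a phantom on some $a'\in A$ and, several rounds later, real Breaker claims $\phi(a')$; we then want to record $a'$ in the simulation, but it is already occupied there. To handle this I would track the set $P\subseteq A$ of cells currently holding a phantom, and, when real Breaker plays $\phi(a')$ with $a'\in P$, declare $a'$ no longer a phantom and hand simulated Breaker a fresh phantom on some other free cell; this keeps the simulated move counts balanced and guarantees that no simulated cell is ever claimed twice, after which the two invariants follow by an easy induction on rounds. (Alternatively, one may observe that the restriction of $G(L,e(1/x_1,\ldots,1/x_k,1/y)=0)$ to the sub-board $U$ is isomorphic, via $\phi$, to $G(A,e(\ldots)=0)$, and then invoke the standard monotonicity principle that enlarging the board and the family of winning sets never hurts Maker; I would note this but present the explicit simulation as the main argument, and the $G^*$ case is identical since $\phi$ preserves distinctness.)
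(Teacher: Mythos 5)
Your proposal is correct and follows essentially the same route as the paper: simulate a play of $G(A,e=0)$, send Maker's simulated move $a$ to $L/a$ on the real board, pull real Breaker moves of the form $L/a$ with $a\in A$ back into the simulation, and absorb all other Breaker moves as arbitrary ("phantom") simulated moves, then transfer the final solution via homogeneity. Your explicit bookkeeping for the case where real Breaker later claims $\phi(a')$ for a cell $a'$ already holding a phantom is a detail the paper's write-up passes over silently (it simply sets $b_i'=b_i$ whenever $b_i\in A$), so that extra care is welcome rather than a deviation.
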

\begin{proof}
Suppose that Maker wins the $G(A,e(x_1,\ldots,x_k,y)=0)$ game. Let $\mathcal{M}$ be a Maker's winning strategy. We consider the following Maker's strategy for the $G([L],e(1/x_1,\ldots,x_k,1/y)=0)$ game. In round 1, if $\mathcal{M}$ tells Maker to choose $m_1$ for the $G(A,e(x_1,\ldots,x_k,y)=0)$ game, then Maker chooses $L/m_1\in\{1,\ldots,L\}$. The rest of the strategy is defined inductively. For all rounds $i$, let $L/b_i$ be the number chosen by Breaker and $L/m_i$ be the number chosen by Maker where $m_i\in\{1,\ldots,L\}$. If $b_i\in A$, then we set $b_i'=b_i$; if $b_i\notin A$, then arbitrarily set $b_i'$ equal to some number in $A\backslash\{m_1,\ldots,m_i,b_1',\ldots,b_{i-1}'\}$. For all rounds $i\geq2$, given $\{m_1,\ldots,m_{i-1},b_1',\ldots,b_{i-1}'\}$, if $\mathcal{M}$ tells Maker to choose $m_i$ for the $G(A,e(x_1,\ldots,x_k,y)=0)$ game, then Maker chooses $L/m_i$ for the $G([L],e(1/x_1,\ldots,1/x_k,1/y)=0)$ game. This process is possible because $\mathcal{M}$ is a winning strategy.

Since $\mathcal{M}$ is a winning strategy, in some round $t$, there exists a subset $\{a_1,\ldots,a_s\}$ of $\{m_1,\ldots,m_t\}$ which form a solution to $e(x_1,\ldots,x_k,y)=0$. By homogeneity, $\{L/a_1,\ldots,L/a_{s}\}$ form a solution to $e(1/x_1,\ldots,1/x_k,1/y)=0$. So Maker wins the $G([L],e(1/x_1,\ldots,1/x_k,1/y)=0)$ game. 

The case for the $G^*([L],e(1/x_1,\ldots,1/x_k,1/y)=0)$ game can be proved in a similar way.
\end{proof}

The key feature of \cref{Theorem:GameBrownRodl} is that one can choose a set $A$ whose least common multiple $L$ is small. This was not used by Brown and R\"{o}dl \cite[Theorem 2.1]{BrownRodl1991}. For interested readers, we note that the first author \cite{Gaiser2023} recently improved a quantitative result by Brown and R\"{o}dl \cite[Theorem 2.5]{BrownRodl1991} with the help of this observation.

Finally, we also need the following definitions.

\begin{definition}
Given $m\in\mathbb{N}$ mutually disjoint subsets $\{s_1,t_1\}$, $\{s_2,t_2\}$, \ldots, $\{s_m,t_m\}$ of $\mathbb{N}$ with size $2$, the \textit{pairing strategy} over those disjoint subsets for a player is defined as follows: if their opponent chooses $s_i$ for some $i=1,2,\ldots,m$, then this player chooses $t_i$.
\end{definition}

\begin{definition}
Let $k\geq2$ be an integer and $a_1x_1+\cdots+a_kx_k=y$ a linear equation. Suppose, at some point of the $G^*([n],a_1x_1+\cdots+a_kx_k=y)$ game, Maker has claimed a set $A$ of at least $k$ integers. We call $a_1\alpha_1+\cdots+a_k\alpha_k$ a \textit{$k$-sum} for any $k$ distinct integers $\alpha_1,\ldots,\alpha_k\in A$.
\end{definition}
\section{Proof of \cref{Theorem:Main-PositivePower-NotDistinct}}\label{Section:Proof-MainTheorem-PositiveNotDistinct}
Let $k,\ell$ be integers with $k\geq2$ and $\ell\geq1$. We first show that $f(k,1)=k+2$.
\begin{lemma}\label{Lemma:Power1-NonDistinct}
For all integers $k\geq2$, we have $f(k,1)=k+2$.
\end{lemma}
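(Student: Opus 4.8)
The plan is to prove $f(k,1)=k+2$ by establishing the two inequalities $f(k,1)\le k+2$ (Maker wins on $[k+2]$) and $f(k,1)\ge k+2$ (Breaker wins on $[k+1]$).

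For the upper bound, I would exhibit an explicit winning strategy for Maker on $[k+2]$ for the equation $x_1+\cdots+x_k=y$ with the $x_i$ not necessarily distinct. The crucial observation is that since repetition is allowed, there are very cheap solutions: for instance $(1,1,\ldots,1,k)$ using only the numbers $1$ and $k$ (a solution with $y=k$), and more generally $\underbrace{1+\cdots+1}_{k}=k$, or $(1,\ldots,1,2,k+1)$ giving $y=k+1$, or $(1,\ldots,1,j,\ldots)$ type solutions with $y=k+2$. So Maker really only needs to capture a small ``sunflower'' of numbers. Concretely, Maker's first move is $1$; then, depending on Breaker's reply, Maker threatens to complete one of $1+\cdots+1=k$ (needs $\{1,k\}$), or $1+\cdots+1+2=k+1$ (needs $\{1,2,k+1\}$), or $1+\cdots+1+3=k+2$ or $1+\cdots+1+2+2=k+2$ (needs $\{1,3,k+2\}$ or $\{1,2,k+2\}$). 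After grabbing $1$, Maker has multiple double threats among the small targets $\{k,k+1,k+2\}$ paired with the small ``adjusters'' $\{2,3\}$; I would check that Breaker cannot block all of them in the available moves, using a short case analysis on Breaker's first one or two replies. The main bookkeeping is to list enough disjoint winning configurations that two of them remain available after each Breaker move.

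For the lower bound, I would give Breaker a pairing strategy on $[k+1]$. By \cref{Corollary:SolutionsFractional} (with $\ell=1$, so $c$ is vacuously free) the solutions to $x_1+\cdots+x_k=y$ in $[k+1]$ are extremely restricted: since $k$ ones already sum to $k$ and any larger $x_i$ pushes the sum past $k+1$, essentially the only solutions in $[k+1]$ are $1+\cdots+1=k$ (all $x_i=1$, $y=k$) and $1+\cdots+1+2=k+1$ (one $x_i=2$, rest $1$, $y=k+1$). Thus Maker can only win by owning $\{1,k\}$ or $\{1,2,k+1\}$; note all winning sets contain $1$. Breaker's strategy: respond to Maker's first move appropriately to deny $1$ if Maker has not taken it, and otherwise pair up $\{2,k+1\}$ and $\{k,\cdot\}$ so that Maker cannot complete either remaining configuration. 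I expect the clean way is: if Maker opens with $1$, Breaker takes $k$ (killing the first solution) and thereafter plays the pairing $\{2,k+1\}$ together with some throwaway pairing on the rest, so Maker can never own both $2$ and $k+1$; if Maker does not open with $1$, Breaker takes $1$ immediately and wins trivially. One must double-check that $2,k,k+1$ are genuinely distinct for $k\ge 2$ (true: $2<k$ fails only at $k=2$, where $2=k$, so the $k=2$ base case — solutions of $x_1+x_2=y$ in $[3]$, i.e.\ $1+1=2$ and $1+2=3$ — needs a separate one-line check that Breaker wins on $[3]$).

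The main obstacle is the upper-bound case analysis: making sure the collection of winning configurations Maker threatens after move $1$ is rich enough that no single Breaker move (and subsequently no pair of Breaker moves) destroys all remaining threats, while keeping the argument uniform in $k$. I would handle this by fixing the small ``core'' numbers $\{1,2,3,k,k+1,k+2\}$ (distinct once $k\ge 4$, with $k=2,3$ checked by hand) and organizing the threats as a small hypergraph on those $O(1)$ vertices, so that the verification reduces to a finite check independent of $k$.
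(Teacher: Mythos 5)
Your proposal is correct and follows essentially the same route as the paper: Maker opens with $1$ and, once Breaker blocks $k$, takes $2$ to create the double threat $k+1$ (from $(k-1)\cdot 1+2$) and $k+2$ (from $(k-2)\cdot 1+2+2$), while Breaker on $[k+1]$ exploits that the only solutions there are $1+\cdots+1=k$ and $1+\cdots+1+2=k+1$ and plays the pairings $\{1,k\}$ and $\{2,k+1\}$ (for $k\geq 3$). Just make sure your hand-check at $k=2$ notices that opening with $1$ actually loses on $[4]$ (after Breaker replies $2$, only the solution $\{1,3,4\}$ survives and it is a single threat), so Maker must instead open with $2$ and double-threaten $1+1=2$ and $2+2=4$, which is exactly the paper's separate $k=2$ case.
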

\begin{proof}
We first show that Maker wins the $G([k+2],k,1)$ game. Note that this will be proved in more full generality later in \cref{Theorem:AritraryCoefficients-NonDistinct}.

Case 1: $k=2$. Maker starts by choosing 2. Since $2+2=4$ and $1+1=2$, Maker wins the game in the next round by choosing either $1$ or $4$, whichever is available.

Case 2: $k>2$.  Maker starts by selecting 1. Notice that \[\underbrace{1+1+\cdots+1}_k=k\cdot1=k,\] \[\underbrace{1+1+\cdots+1}_{k-1}+2=(k-1)\cdot1+2=k+1,\] and \[\underbrace{1+1+\cdots+1}_{k-2}+2+2=(k-2)\cdot1+2\cdot2=k+2.\] If Breaker chooses $k$ in the first round, then Maker chooses 2 in round $2$ and wins the game in round $3$ by choosing either $k+1$ or $k+2$. If Breaker does not choose $k$ in round $1$, then Maker can win the game in round $2$ by choosing $k$.

Now we show that Breaker wins the $G([k+1],k,1)$ game. When $\ell=1$, the only possible solutions to \cref{Equation:Fractional-Coefficients1} in $\{1,\ldots,k+1\}$ are \[(x_1,x_2,\ldots,x_{k-1},x_k,y)=(1,1,\ldots,1,1,k)\] and \[(x_1,x_2,\ldots,x_{k-1},x_k,y)=(1,1,\ldots,1,2,k+1).\] If $k=2$, then Breaker wins the game by the pairing strategy over $\{1,2\}$. If $k\geq3$, then Breaker wins the game by the pairing strategy over $\{1,k\}$ and $\{2,k+1\}$.
\end{proof}

By \cref{Lemma:Linear-Radical,Lemma:Power1-NonDistinct}, we have $f(k,\ell)\leq[f(k,1)]^\ell=(k+2)^\ell$. It remains to show that $f(k,\ell)\geq (k+2)^\ell$. This is true for $\ell=1$ by \cref{Lemma:Power1-NonDistinct}. So we assume $\ell\geq2$. It suffices to show that Breaker wins the $G\left([(k+2)^\ell-1],k,\ell\right)$ game. To do this, we need a result on solutions to $x_1^{1/\ell}+\cdots+x_k^{1/\ell}=y^{1/\ell}$ in $\{1,2,\ldots,(k+2)^\ell-1\}$.

\begin{lemma}\label{Lemma:SolutionsTolPowerNonDistinct}
The only solutions to $x_1^{1/\ell}+\cdots+x_k^{1/\ell}=y^{1/\ell}$ in $\{1,2,\ldots,(k+2)^\ell-1\}$ are \[(x_1,\ldots,x_{k-2},x_{k-1},x_k,y)=(a,\ldots,a,a,a,ak^\ell),\]
and
\[
(x_1,\ldots,x_{k-2},x_{k-1},x_k,y)=(b,\ldots,b,b,b2^\ell,b(k+1)^\ell),
\]
where $a,b\in\{1,2,\ldots,2^{\ell}-1\}$ and are power-$\ell$ free. 
\end{lemma}
\begin{proof}
By \cref{Corollary:SolutionsFractional}, the only solutions to $x_1^{1/\ell}+\cdots+x_k^{1/\ell}=y^{1/\ell}$ in $\mathbb{N}$ are $(x_1,\ldots,x_k,y)=(c\alpha_1^\ell,\ldots,c\alpha_k^\ell,c\beta^\ell)$ where $\alpha_1,\ldots,\alpha_k,\beta,c\in\mathbb{N}$, $\alpha_1+\cdots+\alpha_k=\beta$, and $c$ is power-$\ell$ free. Restricted to the set $\{1,2,\ldots,(k+2)^\ell-1\}$, we must have $c\alpha_1^\ell,\ldots,c\alpha_k^\ell,c\beta^\ell\leq(k+2)^\ell-1$. It follows that $\alpha_1^\ell,\ldots,\alpha_k^\ell\in\{1^\ell,2^\ell,\ldots,(k+1)^\ell\}$ and hence $\alpha_1,\ldots,\alpha_k,\beta\leq k+1$. So $\alpha_1,\ldots,\alpha_k,\beta$ form a solution to $x_1+\cdots+x_k=y$ in $\{1,2,\ldots,k+1\}$. Since the only solutions to $x_1+\cdots+x_k=y$ in $\{1,2,\ldots,k+1\}$ are 
\[(x_1,\ldots,x_{k-1},x_k,y)=(1,\ldots,1,1,k),\] 
and
\[(x_1,\ldots,x_{k-1},x_k,y)=(1,\ldots,1,2,k+1),\] 
we have either \[(\alpha_1,\ldots,\alpha_{k-1},\alpha_k,\beta)=(1,\ldots,1,1,1,k)\] or \[(\alpha_1,\ldots,\alpha_{k-1},\alpha_k,\beta)=(1,\ldots,1,2,k+1).\]
Now since $c\beta^\ell\leq(k+2)^\ell-1$, we have
\[
c\leq\frac{(k+2)^\ell-1}{\beta^\ell}\leq\frac{(k+2)^\ell-1}{k^\ell}<\left(1+\frac{2}{k}\right)^\ell\leq 2^\ell.
\]
Hence $c\in\{1,2,\ldots,2^\ell-1\}$.
\end{proof}

If $k=2$, then Breaker wins the game by the pairing strategy over the sets $\{a,a2^\ell\}$ where $a\in\{1,2,\ldots,2^\ell-1\}$. If $k\geq3$, then Breaker wins the game by the pairing strategy over the sets $\{a,ak^\ell\}$ and $\{b2^\ell,b(k+1)^\ell\}$ where $a,b\in\{1,2,\ldots,2^\ell-1\}$. In these pairing strategies, if Maker chooses some $a$ or $b2^\ell$ so that $ak^\ell>(k+2)^\ell-1$ or $b(k+1)^\ell>(k+2)^\ell-1$, then Breaker arbitrarily chooses an available number in $\{1,2,\ldots,(k+2)^\ell-1\}$.
\section{Proof of \cref{Theorem:Main-PositivePower-Distinct}}\label{Section:Proof-MainTheorem-PositiveDistinct}
Let $k,\ell$ be integers with $k\geq2$ and $\ell\geq1$. We first establish that $f^*(k,1)=k^2+3$.
\begin{lemma}\label{Lemma:Power1Distinct-Upper}
For all integers $k\geq2$, we have $f^*\left(k,1\right)\leq k^2+3$.
\end{lemma}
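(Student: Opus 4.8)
The plan is to exhibit an explicit Maker strategy on $[k^2+3]$ that forces a solution to $x_1+\cdots+x_k=y$ with the $x_i$ pairwise distinct. The natural building block is the family of $k$-sums: if Maker owns $k$ distinct small integers, their sum is a candidate for $y$, and to win Maker needs to own both a full set of $k$ distinct summands and the corresponding sum. First I would isolate the most economical solution templates available in $[k^2+3]$: taking the summands to be $\{1,2,\ldots,k\}$ gives $y=\binom{k+1}{2}$, and swapping the largest summand $k$ for $k+1$, $k+2$, etc., shifts the sum up by $1,2,\ldots$; more generally, replacing $j\in\{1,\dots,k\}$ by a larger unused value gives a controlled family of sums clustered in a predictable interval. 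The point of the bound $k^2+3$ (versus $\binom{k+1}{2}+O(1)$, which is what a single template needs) is to give Maker enough room that several disjoint such templates coexist, so Breaker cannot block all of them with their one-move-per-round budget.

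The key steps, in order, would be: (1) Maker opens by claiming a cheap "hub" element (likely $1$, or a small set of small integers claimed over the first moves) that participates in many solution templates simultaneously, creating a large number of threats that share structure but have disjoint completion requirements. (2) Maintain the invariant that after each of Maker's moves there are at least two "live" threats — pairs (missing summand, or missing sum) such that Maker can complete one on the next move — which are mutually disjoint, so Breaker's single reply kills at most one. This is the standard double-threat / "Maker creates a fork" argument. (3) Carefully count: show that within $[k^2+3]$ the number of available sums of the form $(\text{sum of }k-1\text{ fixed small values})+t$ for varying $t$ in an unclaimed range, together with the number of available alternative summands, always exceeds twice the number of rounds elapsed, so Breaker can never have pre-emptively occupied enough points to prevent the fork. (4) Conclude that Maker completes some template, i.e. owns $k$ distinct integers and their sum, before Breaker can stop all threats, giving $f^*(k,1)\le k^2+3$.

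I would organize the threat bookkeeping around a fixed "skeleton" of $k-2$ small common summands (say $1,2,\ldots,k-2$, which Maker grabs early), leaving two free slots: the two remaining summands $a<b$ and the target $y=1+\cdots+(k-2)+a+b$. The constraint is $a,b$ distinct from each other and from the skeleton, $b\le y\le k^2+3$, which after the skeleton is secured reduces the whole game to a much smaller sub-game about choosing $\{a,b,y\}$ with $a+b = y - \binom{k-1}{2}$ — essentially a Schur-like triple game on a shifted interval of length $\Theta(k^2)$. The core of the proof is to show Maker wins that reduced triple game with the available room, again by a double-threat argument: after claiming a suitable $a$, Maker has many valid pairs $(b,y)$ with $b+\binom{k-1}{2}+?$ matching, and picks one leaving two disjoint completions.

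The main obstacle I expect is the bookkeeping of distinctness and disjointness of threats: because the templates overlap heavily (they share the skeleton, and a value can play the role of a summand in one threat and the sum in another), one must be careful that the two live threats Maker keeps really are completable by disjoint single moves and that Breaker's earlier (possibly adversarially placed) moves cannot have quietly removed the redundancy. Quantifying exactly how much slack beyond $\binom{k+1}{2}$ is needed — and verifying it is at most $k^2+3$, not more — is where the constant gets pinned down, and getting the small-$k$ cases ($k=2,3$) to obey the same bound will likely require separate explicit checks (as in the $\ell=1$ non-distinct lemma above). The matching lower bound $f^*(k,1)\ge k^2+3$, i.e. a Breaker pairing strategy on $[k^2+2]$, is presumably handled in a companion lemma, so here I would focus solely on the upper bound.
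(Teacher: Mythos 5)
Your outline has the right general flavor---greedy acquisition of small numbers followed by a double-threat among $k$-sums is indeed how the paper argues for $k\geq4$---but as written it has two genuine gaps. The first is the concrete organization you propose: Maker cannot ``grab the skeleton $\{1,2,\ldots,k-2\}$ early.'' Breaker answers every Maker move and can contest the small numbers, so all Maker can guarantee is that their $i$-th claimed number is at most $2i-1$ (this is exactly Claim 2 in the paper's proof); the sum of Maker's first $k$ numbers can therefore be pushed up to about $k^2$, not $\binom{k+1}{2}$. In particular the ``skeleton sum'' is not the fixed value $\binom{k-1}{2}$, your reduction to a clean Schur-like triple sub-game on $\{a,b,y\}$ with $a+b=y-\binom{k-1}{2}$ does not exist, and Breaker's ability to contest the small numbers is precisely why the answer is $k^2+3$ (matching the lower bound of the companion lemma) rather than $\binom{k+1}{2}+O(1)$.

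The second gap is that the decisive quantitative step---showing that at the moment Maker is ready to cash in there remain at least two pairwise disjoint unclaimed completions below $k^2+3$, even though Breaker may have invested all previous moves in blocking $k$-sums---is exactly what you defer (``carefully count,'' ``the main obstacle I expect is the bookkeeping''). The paper settles it with a specific strategy (take an available $k$-sum if one exists, otherwise take the smallest available number) together with a case analysis on the number $s$ of $k$-sums Breaker has already occupied, culminating in the extremal case where Breaker has taken all $k+1$ of the $k$-sums of Maker's first $k+1$ numbers; there one deduces that Breaker has touched nothing in $\{1,\ldots,k+2\}$, hence $m_i=i$, and Maker's move $m_{k+2}=k+2$ creates two fresh $k$-sums of size at most $\tfrac12 k^2+\tfrac32 k+2\leq k^2+3$, a fork Breaker cannot stop. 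Without that counting (and the separate explicit checks for $k=2,3$, which you do flag), the proposal is a plan rather than a proof.
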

\begin{proof}
It suffices to show that Maker wins the $G^*([k^2+3],k,1)$ game. For $i=1,2,...,\lceil n/2\rceil$, let $m_i$ denote the number selected by Maker in round $i$. For $j=1,2,\ldots,\lfloor n/2\rfloor$, let $b_j$ denote the number selected by Breaker in round $j$.

We first consider the case that $k=2$. Then $k^2+3=7$. Maker starts by choosing $m_1=1$. Then no matter what $b_1$ is, there are three consecutive numbers in $\{2,3,4,5,6,7\}$ available to Maker, say $\{a,b,c\}$. Maker sets $m_2=b$. Notice that $1+a=b$ and $1+b=c$. Since Breaker can only choose one of $a$ and $c$, Maker wins in round $3$ by setting $m_3=a$ or $m_3=c$.

Now suppose $k=3$. Then $k^2+3=12$. Maker starts by choosing $m_1=1$. We have 4 cases based on Breaker's choices.

Case 1: If $b_1\neq 2$, then Maker chooses $m_2=2$. Suppose Breaker has selected $b_2$. Now consider the 3-term arithmetic progressions of difference $m_1+m_2=3$:
\[
\{3,6,9\}, \{4,7,10\},~\text{and}~\{5,8,11\}.
\]
At the start of round 3, Breaker has chosen two numbers and hence one of these 3-term arithmetic progressions is available to Maker. Maker can set $m_3$ equal to the middle number of the available 3-term arithmetic progression and win the game in round $4$ by choosing either the smallest or the largest number of the same 3-term arithmetic progression.

Case 2: If $b_1=2$, then Maker chooses $m_2=3$. Suppose $b_2\neq4,8,12$. Since $\{4,8,12\}$ is a 3-term arithmetic progression of difference $m_1+m_2=4$, Maker can set $m_3=8$ and win the game in round $4$ by choosing either $4$ or $12$.

Case 3: If $b_1=2$, then Maker chooses $m_2=3$. Suppose $b_2=4$ or $8$. Then Maker sets $m_3=5$. If $b_3\neq9$, then Maker sets $m_4=9$. Since $m_1+m_2+m_3=1+3+5=9=m_4$, Maker wins the game. Suppose $b_3=9$. Then Maker sets $m_4=6$. Since $m_1+m_2+m_4=1+3+6=10$ and $m_1+m_3+m_4=1+5+6=12$, Maker wins in round 5 by choosing either 10 or 12.

Case 4: If $b_1=2$, then Maker chooses $m_2=3$. Suppose $b_2=12$. Then Maker sets $m_3=4$. If $b_3\neq8$, then Maker sets $m_4=8$. Since $m_1+m_2+m_3=1+3+4=8=m_4$, Maker wins the game. Suppose $b_3=8$. Then Maker sets $m_4=5$. Since $m_1+m_2+m_4=1+3+5=9$ and $m_1+m_3+m_4=1+4+5=10$, Maker wins in round 5 by choosing either 9 or 10.

Finally, we consider that $k\geq4$. We start with an observation.

\textbf{Claim 1.} Since $k\geq4$, all the $k$-sums are at least
\[
\sum_{i=1}^ki=\frac{1}{2}k^2+\frac{1}{2}k>2k.
\]

We prove that Maker can win with the following strategy: if a $k$-sum is available to Maker, then Maker chooses the $k$-sum and win the game; otherwise Maker selects the smallest number available. By this strategy, Maker will choose the smallest numbers possible for the first $k$ rounds and the smallest $k$-sum is $m_1+\cdots+m_k$.

\textbf{Claim 2.} $m_i\leq2i-1$ for $i=1,...,k$. Indeed, at the start of round $i$, Maker and Breaker have together chosen $2(i-1)=2i-2$ numbers. Hence, one of the numbers in $\{1,2,\ldots,2i-1\}$ is still available to Maker. So by Maker's strategy, we have $m_i\leq 2i-1$.

By Claim 2, we have
\[
\sum_{i=1}^km_i\leq1+3+\cdots+2k-1=k^2\leq k^2+3.
\]

If Breaker didn't choose $m_1+\cdots+m_k$ during the first $k$ rounds, then Maker chooses $m_1+\cdots+m_k$ in round $k+1$ and wins the game.

Now suppose that Breaker has selected $m_1+\cdots+m_k$ during the first $k$ rounds. Consider the middle of round $k+1$ when Maker has chosen $k+1$ numbers but Breaker has only chosen $k$ numbers where $s$, $1\leq s\leq k$, of them are $k$-sums. Since there are $2k+1$ numbers in $\{1,2,\ldots,2k+1\}$ and Breaker has chosen only $k$ numbers, we have $m_{k+1}\leq2k+1$ by Maker's strategy. Since $m_1,\ldots,m_{k+1}$ are distinct, the total number of $k$-sums is ${k+1\choose k}=k+1$. 

\textbf{Claim 3.} If Breaker has chosen $s$ $k$-sums during the first $k$ rounds and one of them is $\sum_{i=1}^km_i$, then $m_{k+1-s+j}\leq2(k+1-s+j)-1-j=2(k+1-s)+j-1$ for $j=1,2,\ldots,s$. By Claim 1, the $k$-sums are greater than $2k$. So if Breaker has chosen $s$ $k$-sums, then Breaker has chosen at most $k-s$ numbers in $\{1,2,\ldots,2k-s+1\}$. By Maker's strategy, Maker has chosen $k+1$ numbers in $\{1,2,\ldots,2k-s+1\}$. If $s=1$, then we have $m_{k+1}\leq 2k$ and the claim is true. If $s>1$, then by Maker's strategy, we have $m_{k+1}>m_k>\cdots>m_{k+1-s+1}$. Since $m_{k+1},\ldots,m_{k+1-s+1}\in\{1,2,\ldots,2k-s+1\}$, the claim is also true.

Now we split it into two cases based on the value of $s$ and what Breaker chooses in round $k+1$.

Case 1: $1\leq s\leq k-1$ or $s=k$ and Breaker does not choose a $k$-sum in round $k+1$. Then Breaker will have chosen at most $k$ $k$-sums at the beginning of round $k+2$. By Claim 2 and Claim 3, at the beginning of round $k+2$, there exists an unclaimed $k$-sum whose value is at most
\[
\begin{split}
\sum_{i=1}^{k+1-s-2}m_i+\sum_{i=k+1-s}^{k+1}m_i\leq&\sum_{i=1}^{k+1-s-2}(2i-1)+\sum_{j=0}^s[2(k+1-s)+j-1]\\=&(k-s-1)^2+(s+1)2(k+1-s)+\frac{s(s-1)}{2}-1\\=&k^2-\frac{1}{2}s^2+\frac{3}{2}s+2\leq k^2+3.
\end{split}
\]
Hence Maker chooses this $k$-sum in round $k+2$ and wins the $G^*([k^2+3],k,1)$ game.

Case 2: $s=k$ and Breaker chooses a $k$-sum in round $k+1$. In this cases, at the end of round $k+1$, Breaker has chosen all possible $k$-sums from $\{m_1,\ldots,m_{k+1}\}$. By Claim 1, the $k$-sums are greater than $2k$. Since $k+2\leq2k$ for $k\geq2$, Breaker didn't choose any number in $\{1,2,\ldots,k+2\}$. So $m_i=i$ for $i=1,2,\ldots,k+2$. Notice that the largest $k$-sum before round $k+2$ is
\[
\sum_{i=2}^{k+1}m_i=\sum_{i=1}^{k+1}i=\frac{(k+1)(k+2)}{2}-1=\frac{1}{2}k^2+\frac{3}{2}k.
\]

Setting $m_{k+2}=k+2$, Maker now has two larger $k$-sums which are untouched by Breaker:
\[
m_{k+2}+\sum_{i=2}^{k}m_i=k+2+\frac{k(k+1)}{2}-1=\frac{1}{2}k^2+\frac{3}{2}k+1
\]
and
\[
m_{k+1}+m_{k+2}+\sum_{i=2}^{k-1}m_i=k+1+k+2+\frac{(k-1)k}{2}-1=\frac{1}{2}k^2+\frac{3}{2}k+2.
\]
Since $k\geq4$, we have
\[
k^2+3\geq\frac{1}{2}k^2+\frac{3}{2}k+2.
\]
Hence Maker can win the $G^*([k^2+3],k,1)$ game in round $k+3$.
\end{proof}

\begin{lemma}\label{Lemma:Power1Distinct-Lower}
For all integers $k\geq2$, we have $f^*\left(k,1\right)\geq k^2+3$.
\end{lemma}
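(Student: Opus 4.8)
The plan is to show that \emph{Breaker} has a winning strategy in $G^*(k^2+2,k,1)$; since $f^*(k,1)$ is by definition the least board size on which Maker wins, this yields $f^*(k,1)\ge k^2+3$. So from now on I describe a Breaker strategy for the board $[k^2+2]$.

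First I would record the structure of the relevant winning sets. If $\{x_1<\cdots<x_k<y\}\subseteq[k^2+2]$ satisfies $x_1+\cdots+x_k=y$, then $x_i\ge x_1+(i-1)$ gives $kx_1+\binom k2\le y\le k^2+2$, so the smallest part obeys $x_1\le (k+1)/2+2/k$; similarly $x_k\le y-\binom k2\le\binom{k+1}{2}+2$ and $y\in[\binom{k+1}{2},k^2+2]$. Thus every solution uses a genuinely small number, and only a structured minority of solutions avoids the number $1$. The key elementary estimate is this: a solution all of whose parts are at least $2$ and which contains at most one element from each of the disjoint pairs $\{2,3\},\{4,5\},\ldots,\{2j,2j+1\}$ has its parts bounded below termwise by $2,4,\ldots,2j,\,2j+2,2j+3,\ldots,k+j+1$, hence has sum at least $j^2+j+\tfrac12(k-j)(k+3j+3)$. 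A one-line computation shows
\[
j^2+j+\tfrac12(k-j)(k+3j+3)>k^2+2\quad\Longleftrightarrow\quad \binom{k-j}{2}\le k-3 .
\]
Hence, taking $j=k-d$ with $d$ the largest integer satisfying $\binom d2\le k-3$ (so $d=\Theta(\sqrt k)$, and $j\ge1$ once $k\ge3$), the disjoint pairs $\{2,3\},\{4,5\},\ldots,\{2j,2j+1\}$ form a transversal of the family of all $1$-free solutions on $[k^2+2]$; for $k\in\{2,3\}$ one uses instead a small ad hoc pairing.

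Breaker's core strategy is then: claim the number $1$ at the first available turn, and on every other turn follow the pairing strategy over $\{2,3\},\{4,5\},\ldots,\{2j,2j+1\}$. Once Breaker owns $1$, every solution through $1$ is dead, and the pairing above kills every remaining ($1$-free) solution, so Breaker wins. Note that a \emph{pure} pairing strategy cannot work here — one can check directly that no system of disjoint pairs hits every winning set already when $k=2$ — so the single adaptive move ``take $1$'' is essential. The only delicate point is the opening: if Maker's first move lands inside one of the pairs, Breaker answers with that pair's partner (permanently neutralizing the pair) and then grabs $1$ on the next turn; and if Maker opens by taking $1$ itself, Breaker must switch to a different plan.

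That last case is where I expect the real difficulty to lie. When Maker owns $1$, all solutions $\{1,x_2,\ldots,x_k,y\}$ through $1$ are live, Maker has many available sum-values $y$, and no single pairing blocks them all (e.g.\ after Breaker takes $2$, the solution $\{1,3,5,\ldots,2k-1,k^2\}$ still evades the ``shifted'' pairing on $\{3,4\},\{5,6\},\ldots$). Here Breaker needs a genuinely adaptive mix — claiming a second small number, pairing certain ``dominoes'' $\{z,z+\binom k2\}$ among the possible sum-values, and an explicit finite case analysis for small $k$ paralleling the treatment of $k=2,3$ and $k\ge4$ in the proof of \cref{Lemma:Power1Distinct-Upper}. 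It is precisely in verifying that Breaker's allotment of moves suffices in this regime that the bound $n=k^2+2$ — one below the value $k^2+3$ at which Maker wins — must be used to its full strength.
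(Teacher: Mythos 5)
Your proposal has a genuine gap, and it sits exactly where the real work of the lemma is. Your core strategy is ``Breaker grabs $1$ and then plays the pairing over $\{2,3\},\{4,5\},\ldots,\{2j,2j+1\}$,'' with the termwise lower bound $j(j+1)+\tfrac12(k-j)(k+3j+3)>k^2+2$ killing all $1$-free solutions. That computation is fine, but Maker moves \emph{first} in this game, so Maker can simply open with $1$ (indeed, every Maker strategy in the upper-bound proofs does open with $1$ or with the smallest numbers available). Thus the case you defer -- ``if Maker opens by taking $1$ itself, Breaker must switch to a different plan'' -- is not a corner case but the generic one, and for it you offer only a sketch (``claim a second small number, pair certain dominoes $\{z,z+\binom k2\}$, finite case analysis''), not an argument; you even concede this is ``where the real difficulty lies.'' There is also a smaller flaw in your opening analysis: if Maker's first move lands inside a pair and Breaker answers inside that pair, Maker gets to move again before Breaker can ``grab $1$ on the next turn,'' so even that branch collapses into the unhandled case. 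As written, the proposal does not prove that Breaker wins $G^*(k^2+2,k,1)$.

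For comparison, the paper's proof confronts the situation in which Maker holds the small numbers head-on, without trying to own $1$: for the first $k-1$ rounds Breaker greedily takes the smallest available number, which forces Maker's $i$-th smallest element $a_i$ to satisfy $a_i\geq 2i-1$, so every distinct $k$-sum is at least $k^2$, and any $k$-sum omitting one of $a_1,\ldots,a_{k-1}$ is already at least $k^2+3$. Then in round $k$ Breaker either takes a leftover element of $[2k-2]$ (which pushes the minimum $k$-sum past $k^2+2$) or, if Maker owns all of its $k-1$ smallest elements in $[2k-2]$ with sum $A\in\{(k-1)^2,\ldots,(k-1)^2+3\}$, Breaker switches to one of three short pairing strategies matching the only surviving candidate $k$-th elements $2k-1,2k,2k+1$ with the only surviving targets $k^2,k^2+1,k^2+2$ (with $k=2$ treated separately). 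That adaptive ``greedy small numbers, then a tailored pairing on the few remaining threats'' mechanism is precisely the ingredient your proposal is missing; to complete your route you would have to supply an argument of comparable strength for the Maker-owns-$1$ regime, at which point you would essentially be reproducing the paper's proof.
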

\begin{proof}
It suffices to show that Breaker wins the $G([k^2+2],k,1)$ game. For $i=1,2,\ldots,\lceil n/2\rceil$, let $m_i$ denote the number selected by Maker in round $i$. For $j=1,2,\ldots,\lfloor n/2\rfloor$, let $b_j$ denote the number selected by Breaker in round $j$.

We first consider $k=2$. Then $k^2+2=2^2+2=6$. If $m_1=1$, then Breaker chooses $b_1=4$. Now Breaker wins by the pairing strategy over $\{2,3\}$ and $\{5,6\}$. If $m_1\neq1$, then Breaker chooses $b_1=1$. Now there are only two solutions available to Maker: $2+3=5$ and $2+4=6$. There are three cases:

Case 1: $m_1=2$. Then Breaker wins by the pairing strategy over $\{3,5\}$ and $\{4,6\}$.

Case 2: $m_1\neq1,2$, $b_1=1$, $m_2=2$. Then Breaker wins by the pairing strategy over $\{3,5\}$ and $\{4,6\}$.

Case 3: $m_1\neq1,2$, $b_1=1$, $m_2\neq2$. Then by choosing $b_2=2$, Breaker wins because the smallest numbers now available to Maker are $3$ and $4$, and $3+4=7>6$.

Now we consider $k\geq3$. Notice that we have $k^2-1\geq 2k+2$ when $k\geq3$. We will prove that Breaker wins with the following strategy: 
\begin{itemize}
    \item[(1)] in each round $i\in[k-1]$, Breaker chooses smallest number available;
    \item[(2)] and in round $k$, if there is an unclaimed number in $[2k-2]$, then Breaker chooses the unclaimed number; otherwise, Breaker's strategy depends on the sum of the numbers in $[2k-2]$ claimed by Maker, which is denoted by $S$:
        \begin{itemize}
        \item[•] If $S=(k-1)^2+3$, then Breaker chooses smallest numbers possible.
        \item[•] If $S=(k-1)^2+2$, then Breaker plays the pairing strategy over $\{2k-1,k^2+2\}$.
        \item[•] If $S=(k-1)^2+1$, then Breaker plays the pairing strategy over $\{2k-1,k^2+1\}$ and $\{2k,k^2+2\}$.
        \item[•] If $S=(k-1)^2$, then Breaker plays the pairing strategy over $\{2k-1,k^2\}$, $\{2k,k^2+1\}$, and $\{2k+1,k^2+2\}$.
        \end{itemize}
\end{itemize}

Let $a_1<a_2<a_3<\cdots<a_{s}$ with $s\leq\lceil n/2\rceil$ be the numbers chosen by Maker when the game ends.

\textbf{Claim 1:} $a_i\geq2i-1$ for $i=1,2,\ldots,k$, $a_{k+1}\geq2k$, and $a_{k+2}\geq2k+1$. Since $a_i\geq1=2\cdot1-1$, this is true for $i=1$. Now consider $2\leq i\leq k$. By Breaker's strategy, Breaker can select at least $i-1$ numbers in $\{1,\ldots,2(i-1)\}$. So Maker can select at most $i-1$ numbers in $\{1,\ldots,2(i-1)\}$. Hence $a_i\geq2(i-1)+1=2i-1$.

\textbf{Claim 2:} If $a_{k-1}>2k-2$, then Breaker wins. If this happens, then $a_{k-1}\geq2k-1$ and $a_k\geq2k$. Hence the smallest $k$-sum possible for Maker is
\[
\sum_{i=1}^{k}a_i\geq2k-1+2k+\sum_{i=1}^{k-2}(2i-1)=2k-1+2k+(k-2)^2=k^2+3>k^2+2
\]
and hence Breaker wins.

\textbf{Claim 3:} The smallest $k$-sum possible for Maker is $\sum_{i=1}^ka_i\geq\sum_{i=1}^k(2i-1)=k^2$. So Maker needs one of $k^2$, $k^2+1$, and $k^2+2$ to win.

\textbf{Claim 4:} If a $k$-sum does not contain all $\{a_1,...,a_{k-1}\}$, then Breaker wins. Indeed, if a $k$-sum does not contain all of $\{a_1,\ldots,a_{k-1}\}$, then the $k$-sum is at least
\[
a_k+a_{k+1}+\sum_{i=1}^{k-2}a_i\geq2k-1+2k+(k-2)^2=k^2+3>k^2+2.
\]

We first suppose that after Maker has chosen $m_1,\ldots,m_k$, there is an unclaimed number in $[2k-2]$. In this case, Breaker sets $b_k$ equal to some number in $[2k-2]$. Now Breaker has chosen $k$ numbers in $[2k-2]$ which implies that Maker can choose at most $k-2$ numbers in $[2k-2]$. Hence $a_{k-1}>2k-2$. By Claim 2, Breaker wins.

Now assume that all the numbers in $[2k-2]$ are claimed in the middle of round $k$ when Breaker has chosen $k$ numbers and Breaker has chosen $k-1$ numbers. In this case, we must have $a_1,\ldots,a_{k-1}\in[2k-2]$ and hence $\sum_{i=1}^{k-1}a_i=S$. We consider the solutions to $x_1+\cdots+x_k=y$, where $x_1,\ldots,x_k$ are distinct, such that Breaker has not occupied any number in them. By Claim 4, if a $k$-sum does not contain all numbers in $\{a_1,\ldots,a_{k-1}\}$, then Breaker wins. So we have the following cases:

Case 1: If $S=\sum_{i=1}^{k-1}a_i=(k-1)^2$, then there are three solutions to $x_1+\cdots+x_k=y$, where $x_1,\ldots,x_k$ are distinct, such that Breaker has not occupied any number in them: $\{a_1,\ldots,a_{k-1}, 2k-1,k^2\}$, $\{a_1,\ldots,a_{k-1},2k,k^2+1\}$, and $\{a_1,\ldots,a_{k-1},2k+1,k^2+2\}$. This is because if $S=\sum_{i=1}^{k-1}a_i=(k-1)^2$, then
\[
a_k+\sum_{i=1}^{k-1}a_i\geq2k-1+(k-1)^2=k^2,
\]
\[
a_{k+1}+\sum_{i=1}^{k-1}a_i\geq2k+(k-1)^2=k^2+1,
\]
\[
a_{k+2}+\sum_{i=1}^{k-1}a_i\geq2k+1+(k-1)^2=k^2+2,
\]
and
\[
a_{s}+\sum_{i=1}^{k-1}a_i\geq2k+1+1+(k-1)^2=k^2+3>k^2+2
\]
for $s\geq k+3$.

Case 2: If $S=\sum_{i=1}^{k-1}a_i=(k-1)^2+1$, then there are two solutions to $x_1+\cdots+x_k=y$, where $x_1,\ldots,x_k$ are distinct, such that Breaker has not occupied any number in them: $\{a_1,\ldots,a_{k-1},k^2+1\}$ and $\{a_1,\ldots,a_{k-1},a_{k+1},k^2+2\}$. This is because if $S=\sum_{i=1}^{k-1}a_i=(k-1)^2+1$, then
\[
a_k+\sum_{i=1}^{k-1}a_i\geq2k-1+(k-1)^2+1=k^2+1,
\]
\[
a_{k+1}+\sum_{i=1}^{k-1}a_i\geq2k+(k-1)^2+1=k^2+2,
\]
and
\[
a_{s}+\sum_{i=1}^{k-1}a_i\geq2k+1+(k-1)^2+1=k^2+3>k^2+2
\]
for $s\geq k+2$.

Case 3: If $S=\sum_{i=1}^{k-1}a_i=(k-1)^2+2$, then there is only one solution to $x_1+\cdots+x_k=y$, where $x_1,\ldots,x_k$ are distinct, such that Breaker has not occupied any number in them: $\{a_1,\ldots,a_k,k^2+2\}$. This is because if $S=\sum_{i=1}^{k-1}a_i=(k-1)^2+2$, then
\[
a_k+\sum_{i=1}^{k-1}a_i\geq2k-1+(k-1)^2+2=k^2+2,
\]
and
\[
a_{s}+\sum_{i=1}^{k-1}a_i\geq2k+(k-1)^2+2=k^2+3>k^2+2
\]
for $s\geq k+1$.

In Case 1, Breaker uses the pairing strategy  over$\{2k-1,k^2\}$, $\{2k,k^2+1\}$, and $\{2k+1,k^2+2\}$. Since these sets are pairwise disjoint, Breaker wins. Similarly, in Case 2, Breaker uses the pairing strategy over $\{2k-1,k^2+1\}$ and $\{2k,k^2+2\}$; and in Case 3, Breaker uses the pairing strategy over $\{2k-1,k^2+2\}$.
\end{proof}
By \cref{Lemma:Linear-Radical,Lemma:Power1Distinct-Upper,Lemma:Power1Distinct-Lower}, we have $f^*(k,\ell)\leq[f^*(k,1)]^\ell=(k^2+3)^\ell$. It remains to show that $f^*(k,\ell)\geq(k^2+3)^\ell$ for all $\ell\geq2$. To do this, it suffices to show that Breaker wins the $G([(k^2+3)^\ell-1],k,\ell)$ game. For all $c\in\{1,2,\ldots,2^\ell-1\}$, let 
\[
A(c)=\{c\cdot1^\ell,c\cdot2^\ell,\ldots,c\cdot(k^2+2)^\ell\}\cap\{1,2,\ldots,(k^2+3)^\ell-1\}.
\]
Notice that if $c,c'\in\{1,2,\ldots,2^\ell-1\}$ with $c\neq c'$, then $A(c)\cap A(c')=\emptyset$. By \cref{Corollary:SolutionsFractional}, every solution to $x_1^{1/\ell}+\cdots+x_k^{1/\ell}=y^{1/\ell}$, with $x_1,\ldots,x_k$ distinct, in $\{1,2,\ldots,(k^2+3)^\ell-1\}$ belongs to $A(c)$ for some $c\in\{1,2,\ldots,2^{\ell-1}\}$.

Let $\mathcal{B}$ be a Breaker's winning strategy for the $G^*([k^2+2],k,1)$ game. We define a Breaker's strategy for the $G([(k^2+3)^\ell-1],k,\ell)$ game recursively. For rounds $i=1,2,\ldots$, let $m_i$ be the number chosen by Maker and let $b_i$ be the number chosen by Breaker. Let $m_1=c_1a_1^\ell$ where $c_1$ is power-$\ell$ free. If $\mathcal{B}$ tells Breaker to choose $\alpha_1$ for the $G^*([k^2+2],k,1)$ game given that Maker has selected $a_1$, then Breaker sets $b_1=c_1\alpha_1^\ell$. Consider round $i\geq2$. Suppose Maker has chosen $m_1=c_1a_1^\ell, m_2=c_2a_2^\ell, \ldots, m_i=c_ia_i^\ell$ and Breaker has selected $b_1=c_1\alpha_1^\ell, b_2=c_2\alpha_2^\ell, \ldots,b_{i-1}=c_{i-1}\alpha_{i-1}^\ell$. Let $c_{j_1},c_{j_2},\ldots,c_{j_s}\in\{1,\ldots,i-1\}$ be all the indices such that
\[
c_{j_1}=c_{j_2}=\cdots=c_{j_s}=c_i.
\]
If $\mathcal{B}$ tells Breaker to choose $\alpha_i$ for the $G^*([k^2+2],k,1)$ game given that Maker has has selected $a_{j_1}, a_{j_2}, \ldots, a_{j_s}$, $a_i$ and Breaker has selected $b_{j_1}, b_{j_2}, \ldots, b_{j_s}$, then Breaker sets $b_i=c_i\alpha_i^\ell$.

Since $\mathcal{B}$ is a winning strategy for Breaker, Breaker can stop Maker from completing a solution set from each $A(c)$ and hence wins the game.

\section{Proof of \cref{Theorem:Main-NegativePower-NotDistinct}}\label{Section:Proof-MainTheorem-NegativeNotDistinct}
Let $k,\ell$ be integers with $k\geq2$ and $\ell\leq-1$. We start with an observation. 

\begin{lemma}\label{Lemma:MakerNeedsOneForPower-1}
If $n<2k^{-\ell}$ and Maker does not choose $1$ in the first round, then Breakers wins the $G([n],k,\ell)$ game.
\end{lemma}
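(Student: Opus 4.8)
The statement concerns $\ell \le -1$, so write $\ell = -|\ell|$ and note that the equation $x_1^{1/\ell} + \cdots + x_k^{1/\ell} = y^{1/\ell}$ becomes $x_1^{-1/|\ell|} + \cdots + x_k^{-1/|\ell|} = y^{-1/|\ell|}$. The plan is to show that if Maker's first move $m_1$ is not $1$, then every solution $(x_1,\ldots,x_k,y) \in [n]^{k+1}$ to this equation that Maker could hope to complete must satisfy $y \ge 2$, and moreover must use the value $1$ among its $x_i$'s at least once (in fact, enough times that $1$ is genuinely needed). Then Breaker wins by simply taking $1$ on the first move.

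\textbf{First step: $1$ is forced in any solution when $n < 2k^{-\ell}$.} Suppose $(x_1,\ldots,x_k,y)$ is a solution with all coordinates in $[n]$ and with $\min_i x_i \ge 2$. Since each $x_i^{1/\ell} = x_i^{-1/|\ell|} \le 2^{-1/|\ell|}$, the left-hand side is at most $k \cdot 2^{-1/|\ell|}$, so $y^{-1/|\ell|} \le k\,2^{-1/|\ell|}$, i.e. $y \ge 2 k^{-\ell} > n$, contradicting $y \in [n]$. Hence every solution within $[n]$ has some $x_i = 1$. This is the main engine of the argument; the inequality manipulation is routine once one is careful that $x \mapsto x^{-1/|\ell|}$ is decreasing.

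\textbf{Second step: Breaker's strategy.} On the first move Breaker takes $1$ (this is legal precisely because Maker did not take it). After that Breaker plays arbitrarily. Since the number $1$ is now permanently unavailable to Maker, and by the first step every solution contained in $[n]$ requires at least one of its $x$-coordinates to equal $1$, Maker can never occupy a complete solution set. Therefore Breaker wins the $G(n,k,\ell)$ game.

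\textbf{Main obstacle.} The only subtlety is bookkeeping with the direction of the exponent and with the two notions of "solution" — here $x_1,\ldots,x_k$ need not be distinct, so a solution with a repeated entry $1$ is allowed, but the key point is unaffected: \emph{some} $x_i$ must be $1$, and Breaker holding $1$ blocks it. One should also double-check the edge case where a solution might use $y = 1$; but $y = 1$ would force $x_1^{-1/|\ell|} + \cdots + x_k^{-1/|\ell|} = 1$ with each $x_i \ge 1$, which still cannot avoid needing large denominators unless some $x_i$ equals $1$ — and in any case the first-step bound already rules out $\min_i x_i \ge 2$ regardless of the value of $y$. No serious difficulty is expected; this is a short warm-up lemma whose role is to pin down that control of the single element $1$ is decisive in the negative-exponent regime.
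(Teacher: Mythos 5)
Your overall plan (Breaker takes $1$ on the first move and this blocks every solution) coincides with the paper's strategy, but the justification in your first step is wrong, both in the algebra and in the conclusion. From $\min_i x_i\ge 2$ you get $x_i^{1/\ell}\le 2^{1/\ell}$ and hence $y^{1/\ell}\le k\,2^{1/\ell}$; raising to the power $\ell<0$ reverses the inequality and gives $y\ge\bigl(k\,2^{1/\ell}\bigr)^{\ell}=2k^{\ell}=2/k^{-\ell}$, \emph{not} $y\ge 2k^{-\ell}$. Since $2k^{\ell}<2$, this bound is vacuous and produces no contradiction with $y\le n$. Worse, the conclusion you are aiming for --- that every solution in $[n]$ must use $1$ as one of the $x_i$ --- is simply false: $(x_1,\ldots,x_k,y)=(k^{-\ell},\ldots,k^{-\ell},1)$ is a solution, since $k\cdot(k^{-\ell})^{1/\ell}=k\cdot k^{-1}=1=1^{1/\ell}$, it lies in $[n]$ whenever $k^{-\ell}\le n<2k^{-\ell}$ (e.g.\ $k=2$, $\ell=-1$, $n=3$ gives $(2,2,1)$), and none of its $x_i$ equals $1$. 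Your closing remark that $y=1$ ``cannot avoid needing large denominators unless some $x_i$ equals $1$'' is refuted by this same example.

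The estimate has to be run in the opposite direction: use the \emph{upper} bound $x_i\le n<2k^{-\ell}$ on the $x$-coordinates of any solution Maker completes. Since $t\mapsto t^{1/\ell}$ is decreasing for $\ell\le-1$, each $x_i^{1/\ell}>(2k^{-\ell})^{1/\ell}=2^{1/\ell}k^{-1}$, so $y^{1/\ell}=\sum_i x_i^{1/\ell}>2^{1/\ell}$ and hence $y<2$, i.e.\ $y=1$ in \emph{every} solution contained in $[n]$. Breaker's first move $1$ therefore blocks the $y$-coordinate of every solution (not an $x$-coordinate), which is exactly the paper's argument. In short: the strategy you propose is the right one, but the reason it works is not the one you give, and as written the proof does not close.
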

\begin{proof}
Suppose $n<2k^{-\ell}$ and Maker does not choose $1$ in the first round. We show that Break wins the $G([n],k,\ell)$ game by choosing $1$ in the first round. Suppose, for a contradiction, that Maker wins. Let $(x_1,\ldots,x_k,y)=(a_1,\ldots,a_k,b)$ be a solution to \cref{Equation:Fractional-Coefficients1} in $\{1,2,\ldots,n\}$ completed by Maker. Then since $a_i\leq n<2k^{-\ell}$ for all $i=1,\ldots,k$, we have
\[
b^{1/\ell}=a_1^{1/\ell}+\cdots+a_k^{1/\ell}>k(2k^{-\ell})^{1/\ell}=2^{1/\ell}.
\]
So $b<2$ which is impossible.
\end{proof}

Now we prove the lower bound in \cref{Theorem:Main-NegativePower-NotDistinct}.
\begin{lemma}
If $k\geq1/(2^{-1/\ell}-1)$, then $f(k,\ell)\geq(k+1)^{-\ell}$.
\end{lemma}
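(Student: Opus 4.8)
The plan is to prove the statement directly in its game-theoretic form: under the hypothesis, Breaker wins $G((k+1)^{-\ell}-1,k,\ell)$, which is exactly $f(k,\ell)\geq(k+1)^{-\ell}$. Write $m=-\ell\geq 1$ and $N=(k+1)^{-\ell}=(k+1)^m$, so the board is $[N-1]$. The first step is to observe that every solution $(a_1,\dots,a_k,b)$ to \cref{Equation:Fractional-Coefficients1} that lies in $[N-1]$ must have $b=1$: since $1/\ell<0$ and $a_i\leq N-1<N$ for each $i$, we get $b^{1/\ell}=a_1^{1/\ell}+\cdots+a_k^{1/\ell}>kN^{1/\ell}=k/(k+1)$, hence $b<((k+1)/k)^m=(1+1/k)^m$. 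The hypothesis $k\geq 1/(2^{-1/\ell}-1)$ says precisely that $2^{1/m}\geq 1+1/k$, i.e.\ $(1+1/k)^m\leq 2$, so $b<2$ and therefore $b=1$.

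Next I would pin down all solutions on the board with $y=1$, that is, all $(x_1,\dots,x_k)\in[N-1]^k$ with $x_1^{-1/m}+\cdots+x_k^{-1/m}=1$. Homogenize by setting $v=x_1x_2\cdots x_k$ and $u_i=v/x_i$ (all in $\mathbb{N}$); then $u_1^{1/m}+\cdots+u_k^{1/m}=v^{1/m}\bigl(x_1^{-1/m}+\cdots+x_k^{-1/m}\bigr)=v^{1/m}$, so for $m\geq 2$ \cref{Corollary:SolutionsFractional} yields $a_1,\dots,a_k,b,c\in\mathbb{N}$ with $u_i=ca_i^m$, $v=cb^m$, and $a_1+\cdots+a_k=b$. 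Hence $x_i=v/u_i=(b/a_i)^m\in\mathbb{N}$, which forces $b/a_i\in\mathbb{N}$; writing $z_i=b/a_i$ we get $x_i=z_i^m$ and $1/z_1+\cdots+1/z_k=(a_1+\cdots+a_k)/b=1$. (When $m=1$, i.e.\ $\ell=-1$, this last identity holds directly with $z_i=x_i$, so no number theory is needed there.) Since $x_i=z_i^m\leq N-1<(k+1)^m$ forces $z_i\leq k$, the relation $1=\sum_{i=1}^k 1/z_i\geq\sum_{i=1}^k 1/k=1$ must hold with equality, so $z_i=k$ for every $i$. Thus the only solution on the board is $(k^m,\dots,k^m,1)$.

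To finish, note that $1$ and $k^m$ are distinct (since $k\geq 2$) and both lie in $[N-1]$ (since $k^m\leq(k+1)^m-1$), and Maker can complete a solution only by owning both of them. Hence Breaker wins via the pairing strategy over the single pair $\{1,k^m\}$, giving $f(k,\ell)\geq(k+1)^{-\ell}$. The only nonelementary ingredient is the homogenization step combined with \cref{Corollary:SolutionsFractional}, which is what pins every board solution down to a rescaling of the all-equal one; I expect that to be the crux, while the inequality bounding $b$ and the final pairing argument are routine. A detail to treat carefully is that \cref{Corollary:SolutionsFractional} is stated only for exponents $\geq 2$, so the case $m=1$ should be handled separately using the direct identity $\sum_i 1/x_i=1$.
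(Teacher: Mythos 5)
Your proof is correct. The overall skeleton matches the paper's: first show every solution on the board $[(k+1)^{-\ell}-1]$ has $y=1$ (your inequality $b<(1+1/k)^{-\ell}\leq 2$ is the same computation the paper packages as a separate lemma about boards of size less than $2k^{-\ell}$), then show the unique solution with $y=1$ is $(k^{-\ell},\ldots,k^{-\ell},1)$, then block it. Where you genuinely diverge is the middle step. The paper argues directly that each $a_i^{1/\ell}$ must be rational (asserting this via ``the sum of a rational and an irrational is irrational,'' which for $k\geq 2$ terms implicitly leans on Besicovitch's linear-independence theorem), concludes $a_i\in\{1,2^{-\ell},\ldots,k^{-\ell}\}$, and finishes with the bound. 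You instead homogenize by clearing denominators ($v=\prod x_i$, $u_i=v/x_i$) and invoke \cref{Corollary:SolutionsFractional} to classify the lifted solution, then descend to $\sum 1/z_i=1$ with $z_i\leq k$. Your route is slightly longer but makes the appeal to Besicovitch fully explicit and rigorous, and your separate elementary treatment of $\ell=-1$ is appropriate since the corollary's content is vacuous there. Your endgame (pairing over the single pair $\{1,k^{-\ell}\}$) is equivalent to the paper's ``Breaker answers Maker's forced $1$ with $k^{-\ell}$'' and is arguably cleaner, since it does not require first conditioning on Maker's opening move. No gaps.
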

\begin{proof}
Suppose $k\geq1/(2^{-1/\ell}-1)$. It suffices to show that that Breaker wins the $G([(k+1)^{-\ell}-1],k,\ell)$ game. By straightforward calculation, we have
\[
(k+1)^{-\ell}-1< 2k^{-\ell}.
\]
Hence, by \cref{Lemma:MakerNeedsOneForPower-1}, we can assume that Maker chooses $1$ in the first round and $b=1$. Now we show that the only solution to $x_1^{1/\ell}+\cdots+x_k^{1/\ell}=1$ in $\{1,2,\ldots,(k+1)^{-\ell}-1\}$ is $(x_1,\ldots,x_k)=(k^{-\ell},\ldots,k^{-\ell})$. This would imply that Breaker can choose $k^{-\ell}$ in the first round and win the game. Let $a_1,\ldots,a_k\in\{1,2,\ldots,(k+1)^{-\ell}-1\}$ with
\[
a_1^{1/\ell}+\cdots+a_k^{1/\ell}=1,
\]
and $a_1\leq\cdots\leq a_k$. Since the sum a rational number and an irrational number is irrational, $a_1^{1/\ell},\ldots,a_k^{1/\ell}$ are rational numbers. Since $a_1,\ldots,a_k\in\{1,2,\ldots,(k+1)^{-\ell}-1\}$, we have $a_1,\ldots,a_k\in\{1,2^{-\ell},\ldots,k^{-\ell}\}$. If $a_i<k^{-\ell}$ for some $i\in [k]$, then
\[
1=a_1^{1/\ell}+\cdots+a_k^{1/\ell}>k(k^{-\ell})^{1/\ell}=1
\]
which is impossible. Hence the only solution to $x_1^{1/\ell}+\cdots+x_k^{1/\ell}=1$ in $\{1,2,\ldots,(k+1)^{-\ell}-1\}$ is $(x_1,\ldots,x_k)=(k^{-\ell},\ldots,k^{-\ell})$ and Breaker wins the $G([(k+1)^{-\ell}-1],k,\ell)$ game.
\end{proof}

Now we prove the upper bound in \cref{Theorem:Main-NegativePower-NotDistinct}. By \cref{Lemma:Linear-Radical}, $f(k,\ell)\leq[f(k,-1)]^{-\ell}$. Hence, it suffices to show that for all $k\geq4$, $f(k,-1)\leq k+2$. The next two lemmas will establish this.

\begin{lemma}
If $k+1\neq p$ or $p^2$ for any prime $p$, then $f(k,-1)\leq k+1$.
\end{lemma}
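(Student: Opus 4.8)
The plan is to show that Maker can win the $G(k+1,k,-1)$ game by choosing $1$ on the first move and then completing the equation $x_1^{-1}+\cdots+x_k^{-1}=1^{-1}=1$ inside $\{1,2,\ldots,k+1\}$. The key point is to count the solutions to this unit-fraction equation in $[k+1]$. One obvious solution is $(x_1,\ldots,x_k,y)=(k,k,\ldots,k,1)$, since $k\cdot(1/k)=1$. Since all $x_i\le k+1$ and Maker will have already claimed $y=1$, Maker wants to finish by claiming $k$ on the second move. The arithmetic-Ramsey-type obstruction is that Breaker might have claimed $k$ on their first move; so I would identify a \emph{second} disjoint solution to fall back on. A natural candidate comes from splitting one copy of $1/k$: we have $\frac1{k}=\frac1{a}+\frac1{b}$ precisely when $(a-k)(b-k)=k^2$, and taking $a=k-1,b=k(k-1)$ gives $\frac1{k-1}+\frac1{k(k-1)}=\frac1{k-1}\cdot\frac{k}{k}=\frac1{k-1}\cdot\ldots$ — more usefully, $\frac1{k-1}-\frac1k=\frac1{k(k-1)}$, so $\frac1{k-1}+\frac1{k(k-1)}=\frac2k$ only when... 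I would instead use the hypothesis that $k+1$ is neither a prime $p$ nor $p^2$ to manufacture a distinct representation of $1$ as a sum of $k$ unit fractions each with denominator in $[k+1]$: write $k+1=rs$ with $1<r\le s<k+1$ and $r<k$, and then note $\frac1r=\frac1{r+1}+\frac1{r(r+1)}+\cdots$ type identities, or more directly replace the single term $\frac1k$ by $\frac1{k-1}$ together with adjusting another term, using that a suitable denominator $\le k+1$ is now available because $k+1$ is composite and not a prime square.

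More concretely, here is the structure I would carry out. \textbf{Step 1: Reduce to Breaker's first move being $k$.} Maker claims $1$. If Breaker's reply is anything other than $k$, Maker claims $k$ and wins immediately via $(k,\ldots,k,1)$. So assume Breaker claimed $k$ (and Maker has claimed $1$). \textbf{Step 2: Produce a backup solution avoiding the denominator $k$.} I need $k$ (not necessarily distinct) integers in $\{2,3,\ldots,k+1\}\setminus\{k\}$ whose reciprocals sum to $1$; since Maker already has $1=y$, Maker then needs to claim at most one new number per remaining round, and I must ensure enough slack that Breaker cannot block all remaining required numbers. The identity I would lean on is $1 = (k-2)\cdot\frac1k \ \text{is wrong}$; rather, start from $1=\frac{k-1}{k}+\frac1k$ and rewrite $\frac{k-1}{k}$ as a sum of $k-1$ equal-ish unit fractions differently. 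A cleaner route: use $\frac1{a}+\frac1{a+1}+\cdots$? The genuinely robust construction uses $1 = \frac{1}{2}+\frac{1}{2}$ scaled — but with $k$ terms I would write $1=(k-1)\frac1{k}+\frac1{k}$ and then use $\frac1k+\frac1k=\frac1{k-1}+\frac1{k(k-1)}$ when $k(k-1)\le k+1$, which fails for $k\ge3$; so this splitting does not stay in range, confirming that the case hypothesis on $k+1$ is essential rather than cosmetic.

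\textbf{Step 3 (the real argument): use compositeness of $k+1$.} Write $k+1=rs$ with integers $2\le r\le s$ and $r\ne s$ (possible since $k+1$ is not $1$, not prime, and not $p^2$), so in particular $r\le s-1$ and $s\le (k+1)/2$, hence $s\le k+1$ and also $s-1\le k$, $2s-1\le k+1$ in the relevant ranges for $k\ge4$. Then exploit an identity such as $\frac1r = \frac1{r}$ trivially, or better the telescoping $\frac1{s}+\frac1{s}+\cdots$ to realize $\frac{s}{k+1}=\frac1r$ using $s$ copies of $\frac1{k+1}$, combined with $(k-s)$ copies of $\frac1k$ plus one copy of $\frac1k$ replaced so that the total is $1$; the point is that having the extra ``small'' denominator $k+1$ available (legal here, unlike in the $G(k,k,-1)$ game) plus the factorization gives a representation of $1$ as $k$ unit fractions that does not use the denominator $k$ at all, or uses it in a way Breaker's single claimed $k$ cannot kill. \textbf{Step 4: Turn the two solutions into a Maker strategy.} With a primary solution $(k,\ldots,k,1)$ and a genuinely disjoint secondary solution $S$ with $1\in S$ and $k\notin S$, after Breaker's forced first move $k$, Maker pivots to building $S$: Maker already owns $1$, and each further round Maker claims the next needed distinct denominator of $S$, of which there are at most two or three distinct values; a short case check shows Breaker cannot block all of them before Maker completes $S$, because Maker moves first among the remaining $\le 2$ critical numbers. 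I expect Steps 2–3 to be the main obstacle: finding the explicit second representation of $1$ as $k$ unit fractions with denominators in $[k+1]\setminus\{k\}$ and verifying it lies in range is exactly where the number-theoretic hypothesis ``$k+1\ne p,p^2$'' must be used, and getting the identity to stay within $\{1,\ldots,k+1\}$ is delicate. Everything after that is a finite pairing/priority argument of the kind already used in \cref{Lemma:Power1-NonDistinct}.
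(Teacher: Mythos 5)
There is a genuine gap, and you partially flag it yourself. Your Step 1 (Maker takes $1$; if Breaker does not immediately take $k$, Maker takes $k$ and wins via $(k,\ldots,k,1)$) matches the paper. But the heart of the lemma is the explicit backup, and you never produce it: your candidate identities (splitting $\tfrac1k+\tfrac1k$ into $\tfrac1{k-1}+\tfrac1{k(k-1)}$, telescoping, etc.) are each noted by you to fall out of range, and you end by saying that finding the second representation ``is exactly where the number-theoretic hypothesis must be used'' without carrying it out. The paper's construction is short: write $k+1=AB$ with $A,B>1$ and $A\neq B$ (possible precisely because $k+1$ is neither $p$ nor $p^2$), and use the two solutions consisting of $(A-1)B$ copies of $AB$ together with $B-1$ copies of $A(B-1)$, and $A(B-1)$ copies of $AB$ together with $A-1$ copies of $(A-1)B$; both sums of reciprocals equal $1$, and $A(B-1)=k+1-A$ and $(A-1)B=k+1-B$ are distinct numbers in $[k-1]$.

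Your Step 4 also has a structural flaw independent of the missing identity: you plan for \emph{one} secondary solution $S$ with $k\notin S$ and argue Breaker ``cannot block all of them because Maker moves first.'' That is backwards. Maker must acquire \emph{every} distinct denominator of $S$, while Breaker only needs to grab \emph{one} of them; if $S$ requires two new numbers beyond $1$, Breaker simply takes the second one after Maker takes the first. What actually works (and what the paper does) is a branching pair of backup solutions sharing the stem $\{1,\,k+1=AB\}$ with two different completion numbers $A(B-1)\neq(A-1)B$: Maker secures $1$, then $AB$, and Breaker's single intervening move can kill at most one of the two completions, so Maker finishes in round $3$. Without both the explicit factorization-based identities and this two-branch endgame, the proof does not close.
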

\begin{proof}
Suppose $k+1\neq p$ or $p^2$ for any prime $p$. We will prove that Maker wins the $G([k+1],k,-1)$ game. In this case, we have $k+1=rs$ for some integers $r>1$ and $s>1$ with $r\neq s$. Then we have $(r-1)s\neq r(s-1)$, $(r-1)s<k<k+1$, and $r(s-1)<k<k+1$. Consider the following solutions in $\{1,2,\ldots,k+1\}$: 
\[
(x_1,x_2,\ldots,x_{k-1},x_k,y)=(k,k,\ldots,k,k,1),\]
\[
(x_1,\ldots,x_{(r-1)s},x_{(r-1)s+1},\ldots,x_k,y)=(rs,\ldots,rs,r(s-1),\ldots,r(s-1),1),
\]
and
\[
(x_1,\ldots,x_{r(s-1)},x_{r(s-1)+1},\ldots,x_k,y)=(rs,\ldots,rs,(r-1)s,\ldots,(r-1)s,1).
\]
Based on these solutions, Maker wins the $G([k+1],k,-1)$ game using the following strategy: Maker chooses 1 in the first round; if Breaker does not choose $k$ in the first round, then Maker chooses $k$ in the second round to win the game; otherwise, Maker will choose $k+1=rs$ in the second round and win the game by choosing either $r(s-1)$ or $(r-1)s$ in the third round.
\end{proof}

\begin{lemma}\label{Lemma:UpperPower-1Prime}
If $k+1=p$ or $p^2$ for some prime $p\geq5$, then $f(k,-1)\leq k+2$.
\end{lemma}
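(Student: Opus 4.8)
The plan is to show Maker wins the $G(k+2,k,-1)$ game. Because $k\geq 4$, every solution of $1/x_1+\cdots+1/x_k=1/y$ with all entries in $\{1,\dots,k+2\}$ has $y=1$: if $y\geq 2$ then $1/y\leq\tfrac12$ while $\sum_{i=1}^{k}1/x_i\geq k/(k+2)>\tfrac12$. So the game is equivalent to Maker trying to occupy a set of numbers, one of which is necessarily $1$, whose reciprocals (counted with multiplicity) sum to $1$. Maker's strategy opens by claiming $1$. If Breaker does not answer with $k$, then Maker claims $k$ in the next round and wins through the solution given by $k$ copies of $k$ together with $y=1$.

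Suppose instead Breaker claims $k$ on the first move, so that Maker must finish with a solution avoiding the value $k$; here I would build a double threat. For an integer $s$ with $3\leq s\leq k$ and $s\mid 2(k+2)$, set $d_s:=(s-2)(k+2)/s$; this is a positive integer (since $s\mid 2(k+2)$), and $k+2-s$ copies of $k+2$ together with $s-2$ copies of $d_s$ form a solution with exactly $k$ summands whose value set is $\{1,\,k+2,\,d_s\}$. A short estimate gives $2\leq d_s\leq k-1$, so the forbidden value $k$ never occurs, and distinct admissible $s$ yield distinct $d_s$. If two admissible values $s_1\neq s_2$ exist, Maker claims $k+2$ on the second move; Breaker's reply blocks at most one of $d_{s_1},d_{s_2}$, and Maker claims the survivor on the third move, completing the corresponding solution and winning.

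Everything thus reduces to the number-theoretic claim that $2(k+2)$ has two divisors in $[3,k]$ whenever $k+1\in\{p,p^2\}$ with $p\geq5$ prime. Since $k+1$ is odd, $4\mid 2(k+2)$ and $4\leq k$, furnishing one divisor. Writing $2(k+2)=4m$ with $m=\tfrac12(k+2)$ equal to $\tfrac{p+1}{2}$ or $\tfrac{p^2+1}{2}$: in the square case $m$ is an odd integer exceeding $1$ (as $p^2\equiv1\bmod4$), hence has an odd prime factor $q$; in the prime case the same is true unless $m$ is a power of $2$. Any such $q$ satisfies $q\neq4$ and $3\leq q\leq m<k$, so it is a second divisor in $[3,k]$. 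The only way this breaks down is $m$ a power of $2$, i.e.\ $p$ a Mersenne prime: then $2(k+2)$ is a power of $2$ whose divisors in $[3,k]$ are $4,8,16,\dots$, and two of them are present as soon as $k\geq16$; so just one small value, $k=6$ (with $p=7$), escapes the construction.

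The part I expect to require the most care is this divisor count together with the residual small case $k=6$, where the general double-threat argument does not apply and a separate, hands-on analysis of the (very short) list of solutions in $\{1,\dots,8\}$ is needed. By contrast, checking that each candidate solution above really is a solution and that the numbers Maker claims stay legal and pairwise distinct is routine arithmetic, so I would expect the bulk of the write-up to consist of that bookkeeping.
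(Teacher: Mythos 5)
Your construction is, at its core, the same as the paper's. The paper opens with $1$, threatens the all-$k$ solution, and after Breaker blocks $k$ plays $k+2$ and double-threatens via the two solutions $\bigl(\tfrac{k+2}{2},\tfrac{k+2}{2},k+2,\dots,k+2;1\bigr)$ and $\bigl(k-2,\dots,k-2,k+2,\dots,k+2;1\bigr)$; in your parametrization these are exactly $s=4$ and $s=(k+2)/2$, both of which divide $2(k+2)$ because $k$ is even. Your divisor analysis is more systematic, and it correctly detects the degeneracy that the paper overlooks: for $k=6$ the paper's two threats coincide, since $k-2=(k+2)/2=4$, so the paper's proof is also broken at that value. (Minor point: in the Mersenne case two powers of two already give two admissible divisors once $k\geq 14$, not $16$; the first Mersenne value after $k=6$ is $k=30$ anyway.)

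The genuine gap is the deferred case $k=6$ ($p=7$), which is not the routine residual check you expect: the inequality you are trying to prove is false there. In $\{1,\dots,8\}$ every solution of $1/x_1+\cdots+1/x_6=1/y$ has $y=1$ and all $x_i\in\{2,3,4,6,8\}$ (a term $1/7$, or $1$ to $4$ terms $1/5$, is ruled out by clearing denominators, and five terms $1/5$ already sum to $1$), and an exhaustive count yields exactly four solutions, with value sets $\{1,6\}$, $\{1,4,6,8\}$, $\{1,4,8\}$, and $\{1,3,6,8\}$. Each of these contains either both of $1,6$ or both of $4,8$, so Breaker wins $G(8,6,-1)$ by the pairing strategy over the disjoint pairs $\{1,6\}$ and $\{4,8\}$. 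Hence $f(6,-1)\geq 9$, and no hands-on analysis can close your residual case; the lemma (and consequently the $k\geq4$ clause of \cref{Theorem:Main-NegativePower-NotDistinct} at $k=6$, $\ell=-1$) needs to be amended there, e.g.\ by excluding $k=6$ or enlarging the board for that single value. For every other $k$ in the lemma's range your argument is sound and your two admissible divisors exist, so apart from this (shared, but in your case at least explicitly flagged) defect the proof goes through.
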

\begin{proof}
Suppose $k+1=p$ or $p^2$ for some prime $p\geq5$. We show that Maker wins the $G([k+2],k,-1)$ game. 

Since $k+1\geq5$ is odd, $k$ is even and $k\geq4$. Hence $(k+2)/2\neq k$. Consider the following solutions in $\{1,2,\ldots,k+2\}$:
\[
(x_1,x_2,\ldots,x_{k-1},x_k,y)=(k,k,\ldots,k,k,1),
\]
\[
(x_1,\ldots,x_{(k-2)/2},x_{(k-2)/2+1},\ldots,x_k,y)=(k-2,\ldots,k-2,k+2,\ldots,k+2,1),
\]
and
\[
(x_1,x_2,x_3,\ldots,x_k,y)=((k+2)/2,(k+2)/2,k+2,\ldots,k+2,1).
\]
Based on these solutions, Maker wins the $G([k+2],k,-1)$ game by the following strategy: Maker chooses $1$ in the first round; if Breaker does not choose $k$ in the first round, then Maker chooses $k$ in the second round to win the game; otherwise, Maker will choose $k+2$ in the second round and win the game by choosing either $(k+2)/2$ or $k-2$ in the third round.
\end{proof}
\subsection{Remarks}
The inequality in \cref{Lemma:UpperPower-1Prime} becomes equality when $k+1=p$ for some odd prime $p$.

\begin{theorem}
If $k+1=p$ for some odd prime $p$, then $f(k,-1)=k+2$.
\end{theorem}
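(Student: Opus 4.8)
The plan is to prove matching upper and lower bounds. The upper bound $f(k,-1)\le k+2$ is immediate from \cref{Lemma:UpperPower-1Prime} whenever $p\ge 5$; the only odd prime it does not cover is $p=3$, i.e.\ $k=2$, which I would handle by direct inspection of $G(4,2,-1)$: the only solutions of $1/x_1+1/x_2=1/y$ inside $\{1,2,3,4\}$ are $(2,2,1)$ and $(4,4,2)$, so Maker opens with $2$, thereby threatening to finish with either $1$ or $4$, and Breaker can block only one of them.

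The substance of the theorem is the lower bound $f(k,-1)\ge k+2$, equivalently that Breaker wins $G(k+1,k,-1)$ when $p:=k+1$ is an odd prime. I would reduce this to the structural claim that the \emph{unique} solution of \cref{Equation:Fractional-Coefficients1} with $\ell=-1$ lying in $\{1,\ldots,p\}$ is $(x_1,\ldots,x_k,y)=(k,\ldots,k,1)$. Granting this claim, Breaker simply plays the pairing strategy over the single pair $\{1,k\}$, which is a legitimate pair since $k\ge 2$, and this prevents Maker from ever occupying that solution; hence Breaker wins and $f(k,-1)\ge (k+1)+1=k+2$.

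The structural claim I would prove in three steps. First, $y=1$: since each $x_i\le p$ we get $1/y=\sum_{i=1}^k x_i^{-1}\ge k/p=(p-1)/p$, so $y\le p/(p-1)<2$. Second, every $x_i\ge 2$, since an $x_i=1$ would already force $\sum x_i^{-1}>1$. Third, no $x_i$ equals $p$: writing $a_j$ for the number of indices $i$ with $x_i=j$ (for $2\le j\le p$), we have $\sum_{j=2}^{p}a_j=k=p-1$ and $\sum_{j=2}^{p}a_j/j=1$, so multiplying by $L:=\operatorname{lcm}(2,\ldots,p)$ gives $\sum_{j=2}^{p}a_j(L/j)=L$. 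Since $p$ is prime it divides $L$ exactly once, so $p\mid L/j$ for $2\le j\le p-1$ while $p\nmid L/p$; reducing modulo $p$ annihilates every term except the $j=p$ one and leaves $a_p(L/p)\equiv 0\pmod p$, whence $p\mid a_p$, and therefore $a_p=0$ because $0\le a_p\le p-1$. Consequently all $x_i$ lie in $\{2,\ldots,p-1\}$, so $\sum_{i=1}^{p-1}x_i^{-1}\ge (p-1)\cdot\tfrac{1}{p-1}=1$ with equality only when every $x_i=p-1=k$; as the sum must equal $1$, this forces $x_i=k$ for all $i$, proving the claim.

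The main obstacle is the step that rules out $x_i=p$, and it is the only place where primality of $p$ is used; when $k+1$ is composite, solutions built from divisor pairs of $k+1$ appear (compare the constructions in the preceding lemmas), so the theorem genuinely fails in that range. Everything else in the argument is a one-line estimate or routine bookkeeping.
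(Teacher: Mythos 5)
Your proof is correct, and its arithmetic core is the same as the paper's: both reduce any solution in $\{1,\ldots,p\}$ to the case $y=1$ and then exploit the primality of $p$ through an lcm/denominator divisibility argument. Where you differ is in the packaging. The paper lets Breaker react to Maker's first move --- invoking \cref{Lemma:MakerNeedsOneForPower-1} to force Maker to take $1$, answering with $k$, and then showing that $\{1,\ldots,k-1,k+1\}$ admits no solution with $y=1$ --- whereas you prove the stronger structural statement that $(k,\ldots,k,1)$ is the \emph{unique} solution of \cref{Equation:Fractional-Coefficients1} (with $\ell=-1$) inside $\{1,\ldots,p\}$, ruling out any $x_i=p$ by reducing $\sum_{j}a_j(L/j)=L$ modulo $p$, and then let Breaker win with the single pairing $\{1,k\}$. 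This buys a self-contained and completely non-adaptive Breaker strategy, at the cost of proving slightly more than the game argument needs. You also handle the upper bound at $p=3$ by hand, which is a genuine (if small) point of extra care: \cref{Lemma:UpperPower-1Prime} is stated only for $p\ge5$, so the paper's one-line citation of it technically leaves $k=2$ uncovered, while your direct check of $G(4,2,-1)$ (Maker opens with $2$, threatening both $\{1,2\}$ and $\{2,4\}$) closes that case.
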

\begin{proof}
Suppose $k+1=p$ for some odd prime. By \cref{Lemma:UpperPower-1Prime}, we have $f(k,-1)\leq k+2$. It remains to show that $f(k,-1)\geq k+2$. To do this, it suffices to show that Breaker wins the $G([k+1],k,-1)$ game.

Case 1: $k+1=3$. The only solution to $1/x_1+\cdots+1/x_k=1/y$ in $\{1,2,3\}$ with $x_1,\ldots, x_k$ not necessarily distinct is $(x_1,x_2,y)=(2,2,1)$. Hence Breaker can win by choosing either $1$ or $2$ in the first round.

Case 2: $k+1\geq5$. By \cref{Lemma:MakerNeedsOneForPower-1}, if Maker does not choose $1$ in the first round, then Breaker wins. So we assume that Maker chooses $1$ in the first round. Now we show that Breaker wins by choosing $k$ in the first round. It suffices to show that $\{1,2,\ldots,k-1,k+1\}$ does not have a solution to $1/x_1+\cdots+1/x_k=1/1$ where $x_1$, $\ldots$, $x_k$ are not necessarily distinct. Suppose $(x_1,x_2,\ldots,x_{k-1},x_k)=(a_1,a_2,\ldots,a_{k-1},a_k)$ is a solution in $\{1,2,\ldots,k-1,k+1\}$. We show that $a_k=k+1$. Suppose not. Then $a_i<k$ for all $i=1,2,\ldots,k$. So
\[
\frac{1}{a_1}+\cdots+\frac{1}{a_k}>\frac{1}{k}+\cdots+\frac{1}{k}=\frac{1}{1}
\]
which is a contradiction. Hence $a_k=k+1$. Now we have
\[
1=\frac{r}{k+1}+\sum_{i=1}^{k-r}\frac{1}{a_i}
\]
where $r\in\{1,2,\ldots,k-1\}$ and $a_i<k$ for all $i=1,\ldots,k-r$. Rearranging the equation, we get
\[
\sum_{i=1}^{k-r}\frac{1}{a_i}=\frac{p-r}{p}.
\]
Since $p$ is prime, $p$ divides the least common multiple of $a_1,\ldots,a_{k-r}$. Since $p$ is prime, $p$ divides $a_i$ for some $i$ which is a contradiction because $a_i<p$ for all $i$. Hence Breaker wins the game.
\end{proof}
We are unable to verify that $f(k,-1)=k+2$ when $k+1=p^2$ for some odd prime $p$. However, we believe this should be the case.
\begin{conjecture}
If $k+1=p^2$ for some odd prime $p$, then $f(k,-1)=k+2$.
\end{conjecture}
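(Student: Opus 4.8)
\emph{Towards a proof.} The plan is to supply the one missing half, namely that Breaker wins $G(k+1,k,-1)$; together with \cref{Lemma:UpperPower-1Prime} this yields $f(k,-1)=k+2$ for every odd prime $p\ge 5$ (the single case $p=3$, i.e.\ $k=8$, can be checked by hand). As in the proof of the preceding theorem, \cref{Lemma:MakerNeedsOneForPower-1} applies because $k+1<2k$, so we may assume Maker opens with $1$ and that every solution realised inside $\{1,\dots,k+1\}$ has $y=1$; thus Maker can only win by eventually occupying a multiset $\{a_1,\dots,a_k\}\subseteq\{1,\dots,p^2\}$ with $\sum_{i=1}^{k}1/a_i=1$.

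First I would record two elementary facts. (i) If $a_i\le k=p^2-1$ for all $i$ then $\sum 1/a_i\ge k/k=1$, with equality only when every $a_i=k$; hence the only solution not containing the value $p^2$ is the ``all-$k$'' one. (ii) Setting $a_i=p^2-d_i$, the equation $\sum 1/a_i=1$ is equivalent to $\sum_i d_i/(p^2-d_i)=1$; a single summand with $d_i>0$ is at most $1$ only when $a_i\ge (p^2+1)/2$, so no value below $(p^2+1)/2$ occurs in any solution, and inside the window $[(p^2+1)/2,\,p^2-2]$ the only $w$ for which $(p^2-w)/w$ is a unit fraction is $w=p(p-1)$. Pairing $p-1$ copies of $p^2-p$ with $p(p-1)$ copies of $p^2$ (and $y=1$) gives a solution we call $(\star)$.

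The crux is the claim that \emph{for an odd prime $p$ the only solution of $\sum_{i=1}^{p^2-1}1/a_i=1$ in $\{1,\dots,p^2\}$ avoiding the value $p^2-1$ is $(\star)$.} Granting this, Breaker wins in two moves: if Maker's opening move is not $1$, take $1$ and invoke \cref{Lemma:MakerNeedsOneForPower-1}; otherwise take $k=p^2-1$ at once, after which (Maker already holds $1$, and by fact (i) every surviving solution uses $p^2$) the only solution Maker can still complete is $(\star)$; on the second round Breaker takes whichever of $p^2-p$, $p^2$ Maker did not just take, which kills $(\star)$. To prove the claim I would clear denominators: writing $r$ for the number of terms lying outside $\{p^2-1,p^2\}$, one has $\sum c_j/w_j=(r+1)/p^2$ over the values $w_j$ outside $\{p^2-1,p^2\}$, so $p^2\mid M(r+1)$ where $M$ is the least common multiple of those $w_j$. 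If no $w_j$ is divisible by $p$ then $p^2\mid r+1$, forcing $r=p^2-1$ and a solution with all $a_i\le p^2-2$, contradicting $\sum 1/a_i\ge (p^2-1)/(p^2-2)>1$; hence some $w_j$ is a multiple of $p$ in $\{p(p+1)/2,\dots,p(p-1)\}$, whence $p\,\|\,M$ and $p\mid r+1$. One then has to combine this with the defect identity $\sum_i d_i/(p^2-d_i)=1$ --- which forces $\sum_i d_i<p^2$ and at least $(p^2+1)/2$ of the $d_i$ to vanish --- to rule out every configuration other than $(\star)$.

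I expect this classification to be the main obstacle, and it is presumably why the statement is only conjectured. When $k+1=p$ the analogous step is immediate, because any extra solution would force $p$ to divide some $a_i<p$; when $k+1=p^2$ the numbers $p,2p,\dots,(p-1)p$ and $p^2$ are all at most $p^2$ and divisible by $p$, so that obstruction evaporates and one faces a genuine bounded Egyptian-fraction problem. I have verified the claim for $p=3$ (there the only solutions of $\sum_{i=1}^{8}1/a_i=1$ in $\{1,\dots,9\}$ are all-$8$, $(\star)$, and one further solution using $8$), and checking $p=5,7$ by computer should reveal the correct normal form; should a sporadic extra solution appear for some larger $p$, it still uses $p^2$ and avoids all but finitely many small values, so it would merely lengthen Breaker's reply --- via a pairing strategy as in \cref{Section:Proof-MainTheorem-NegativeNotDistinct} --- rather than change the overall approach.
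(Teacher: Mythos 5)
You are attempting a statement that the paper itself leaves open: it is stated as a conjecture, with the authors explicitly saying they are unable to verify $f(k,-1)=k+2$ when $k+1=p^2$, so there is no proof in the paper to compare against — and your proposal does not close the gap either. The framing of your reduction is sound: by \cref{Lemma:UpperPower-1Prime} only the lower bound $f(k,-1)\geq k+2$ is at issue, by \cref{Lemma:MakerNeedsOneForPower-1} you may assume Maker opens with $1$ and every solution inside $\{1,\dots,p^2\}$ has $y=1$, your facts (i) and (ii) are correct (every value occurring in a solution is at least $(p^2+1)/2$, the defect identity $\sum_i d_i/(p^2-d_i)=1$ holds, $w=p(p-1)$ is the only admissible value below $p^2-1$ whose defect $(p^2-w)/w$ is a unit fraction, and $(\star)$ is indeed a solution), and the two-move Breaker plan (take $p^2-1$, then whichever of $p(p-1)$, $p^2$ remains) does win \emph{provided} your crux claim holds. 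But the crux claim — that $(\star)$ is the only solution of $\sum_{i=1}^{p^2-1}1/a_i=1$ in $\{1,\dots,p^2\}$ avoiding the value $p^2-1$ — is precisely the hard content, and you do not prove it: you verify it for $p=3$ and record the congruence facts $p\,\|\,M$ and $p\mid r+1$, which localize the problem but do not classify the solutions. This is exactly the obstruction that makes the $k+1=p^2$ case different from the theorem for $k+1=p$, where a single divisibility argument kills every solution avoiding $k$; here the multiples of $p$ in the admissible window (and $p^2$ itself) destroy that argument, leaving a genuine bounded Egyptian-fraction classification that your sketch acknowledges but does not carry out.

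A secondary weakness is the closing fallback: you assert that any sporadic extra solution avoiding $p^2-1$ could be handled "via a pairing strategy" and would merely lengthen Breaker's reply. That is not justified. By your fact (i), any such solution must contain $p^2$, so the surviving solution sets after Breaker takes $p^2-1$ need not be pairwise disjoint from each other or from $\{1,p(p-1),p^2\}$; if Maker answers with $p^2$, Breaker must then block several mutually intersecting solutions, and a pairing strategy requires disjoint pairs. So without the classification (or at least a structural statement about the surviving solutions strong enough to support an explicit blocking strategy), the argument does not go through, and the statement remains, as in the paper, a conjecture.
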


\section{Proof of \cref{Theorem:Main-NegativePower-Distinct}}\label{Section:Proof-MainTheorem-NegativeDistinct}
Let $k,\ell$ be integers with $k\geq2$ and $\ell\leq-1$. By \cref{Lemma:Linear-Radical}, we have $f^*(k,\ell)\leq[f^*(k,-1)]^{-\ell}$. It remains to show that $f^*(k,-1)=\exp(O_k(k\log k))$. By \cref{Theorem:GameBrownRodl}, it suffices to find a finite set $A\subseteq\mathbb{N}$ such that Maker wins the $G^*(A,x_1+\cdots+x_k=y)$ game and the least common multiple of $A$ is small.

\begin{lemma}\label{Lemma:Power-1Distinct}
Let $k\geq4$ be an integer and let $A=\{1,\ldots,2k+1\}\cup\{k^2-k+1,\ldots,k^2+2k\}$. Then Maker wins the $G^*(A,x_1+\cdots+x_k=y)$ game.
\end{lemma}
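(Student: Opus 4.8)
The plan is to have Maker build up a set of small integers by always selecting the smallest available number, then cash in on a target sum once enough small numbers are collected; the second block $\{k^2-k+1,\ldots,k^2+2k\}$ of $A$ exists precisely to serve as the pool of possible values of $y$. First I would record the key arithmetic fact: if Maker has claimed $k$ distinct integers from $\{1,\ldots,2k+1\}$, their sum lies between $\binom{k+1}{2}=\frac{k^2+k}{2}$ and $\sum_{i=k+2}^{2k+1}i = \frac{(2k+1)(2k+2)}{2}-\frac{(k+1)(k+2)}{2}=\frac{3k^2+3k}{2}$; more usefully, with a greedy strategy Maker's $i$-th pick satisfies $m_i\le 2i-1$ (as in Claim 2 of \cref{Lemma:Power1Distinct-Upper}), so after $k$ rounds the smallest available $k$-sum is at most $k^2$, and since Breaker has touched at most $k$ numbers, many $k$-sums in the window $[\frac{k^2+k}{2}, k^2+O(k)]$ remain unclaimed. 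The point of including $\{k^2-k+1,\ldots,k^2+2k\}$ (a run of $3k$ consecutive integers containing all these candidate sums) in $A$ is that all these potential values of $y$ are legal moves.

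The main steps, in order: (1) Fix Maker's strategy — for the first $k$ rounds take the smallest unclaimed element of $A$; thereafter, if some $k$-sum of Maker's current set is unclaimed and still in $A$, take it and win, else again take the smallest unclaimed element. (2) Show $m_i \le 2i-1$ for $i=1,\ldots,k$, hence Maker's first $k$ picks all lie in $\{1,\ldots,2k-1\}\subseteq\{1,\ldots,2k+1\}$, so they are $k$ distinct integers from the small block and every $k$-subset of size $k$ among the eventual $k+1$ or $k+2$ small picks gives a distinct $k$-sum landing in $[\frac{k^2+k}{2}, \tfrac{3k^2+3k}{2}]$ — I must check this whole interval is contained in $A$; since $A$'s second block is $\{k^2-k+1,\ldots,k^2+2k\}$ this needs $\frac{k^2+k}{2}\ge k^2-k+1$, which fails, so in fact I only claim the \emph{relevant} $k$-sums (those actually realized by a greedy Maker, which are $\le k^2$ and $\ge$ something like $k^2-O(k)$) land in the block — this is the delicate bookkeeping point. (3) Mirror the counting argument of \cref{Lemma:Power1Distinct-Upper}: after $k$ rounds Breaker has $k-1$ numbers; if Breaker has not blocked the sum $m_1+\cdots+m_k$, Maker wins on round $k+1$; otherwise Maker plays the smallest available number again (getting $m_{k+1}\le 2k$ or so), producing $\binom{k+1}{k}=k+1$ distinct $k$-sums, of which Breaker can have blocked at most $k$, so one survives and lies in $A$, and Maker wins on round $k+2$; iterate once more with $m_{k+2}$ if necessary, exactly as in Cases 1 and 2 of that lemma. (4) Conclude that the surviving $k$-sum $y$ is a genuine element of $A$ (this is where the second block of $A$ is used) and that $\{m_{i_1},\ldots,m_{i_k},y\}$ is a solution with distinct $x$'s.

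I expect the main obstacle to be step (2)–(3): verifying that, under the greedy strategy, the finitely many $k$-sums Maker might need (the sum of the $k$ smallest picks, and the $k+1$ sums after one extra greedy pick, and the handful after a second extra pick) all land inside the interval $[k^2-k+1, k^2+2k]$ covered by the second block of $A$ — i.e.\ that the block is wide enough and correctly positioned. Concretely one must show the largest such $k$-sum is at most $k^2+2k$ and the smallest is at least $k^2-k+1$; the upper estimate follows from $m_i\le 2i-1$ plus a slack term for the one or two extra rounds (giving a bound like $k^2 + O(k)$, and one checks the constant is $\le 2$), and the lower estimate from the fact that Maker's picks, being a $k$-subset of $\{1,\ldots,2k+1\}$ missing at most a couple of small values, sum to at least $\frac{k^2+k}{2} - O(k)$; here I would need $k\ge 4$ to make the constants work, which matches the hypothesis. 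Once the geometry of $A$ is pinned down, the combinatorial game argument is a direct transcription of the proof of \cref{Lemma:Power1Distinct-Upper}.
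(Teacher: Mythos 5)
There is a genuine gap, and it is exactly at the point you flag as ``the delicate bookkeeping point.'' Under your greedy strategy the only thing you control is an \emph{upper} bound $m_i\leq 2i-1$ on Maker's picks; you have no matching lower bound. Breaker can simply refuse to play in $\{1,\ldots,2k+1\}$ (playing instead in the second block), in which case greedy Maker collects $1,2,\ldots,k+1$, and every $k$-subset of these sums to at most $\frac{(k+1)(k+2)}{2}-1=\frac{k^2+3k}{2}$, which for $k\geq5$ is strictly less than $k^2-k+1$ and also greater than $2k+1$ --- so \emph{none} of Maker's candidate values of $y$ lie in $A$, and Maker has no winning move at round $k+1$ or $k+2$. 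Your own attempted lower estimate ($\approx\frac{k^2+k}{2}-O(k)$) is of the right order of magnitude to see the problem but nowhere near the needed threshold $k^2-k+1$; the assertion that the realized $k$-sums are ``$\geq$ something like $k^2-O(k)$'' is false for a greedy Maker. The set $A$ in the lemma is not wide enough to accommodate a greedy strategy, and no amount of constant-chasing with $k\geq4$ repairs this.

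The missing idea is to replace greed by a \emph{pairing strategy}: Maker takes $1$ first and then pairs $\{2,3\},\{4,5\},\ldots,\{2k,2k+1\}$, answering Breaker inside a pair whenever Breaker enters one, and otherwise taking the smallest element of the smallest untouched pair. This guarantees that after round $k+1$ Maker holds $1$ together with exactly one element of each pair $\{2j,2j+1\}$, $j=1,\ldots,k$, so \emph{every} $k$-subset of Maker's numbers has sum between $1+2+4+\cdots+2(k-1)=k^2-k+1$ and $3+5+\cdots+(2k+1)=k^2+2k$ --- precisely the second block of $A$. From there your counting in step (3) goes through essentially as in \cref{Lemma:Power1Distinct-Upper}: there are $k+1$ such $k$-sums; if Breaker has blocked all of them, Breaker never played in $\{1,\ldots,2k+1\}$, so Maker took the smaller element of every pair, $2k+1$ is free, and taking $m_{k+2}=2k+1$ creates two new unblocked $k$-sums ($k^2+k+1$ and $k^2+k+3$, both still $\leq k^2+2k$), only one of which Breaker can kill. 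So your overall architecture (small picks, then cash in on a sum in the second block, with a one- or two-round endgame count) matches the paper, but the strategy you chose for the accumulation phase does not pin the sums into the block, and that is the whole point of the lemma's choice of $A$.
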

\begin{proof}
Let $k\geq4$. For $i=1,\ldots,k+3$, let $m_i$ be the number selected by Maker in round $i$ and let $b_i$ be the number selected by Breaker in round $i$.

Consider the following strategy for Maker:
\begin{itemize}
\item[(1)] Set $m_1=1$ and $M_1=\{\{2,3\},\{4,5\},\ldots,\{2k,2k+1\}\}$.
\item[(2)] For $i=2,\ldots,k+1$, if $b_{i-1}\in B$ for some $B\in M_{i-1}$, then set $m_i\in B\backslash\{b_{i-1}\}$ and $M_i=M_{i-1}\backslash\{B\}$; if $b_{i-1}\notin B$ for any $B\in M_{i-1}$, then set $m_i=\min_{S\in M_{i-1}}\min S$, $M_i=M_{i-1}\backslash S'$ where $m_i\in S'$.
\item[(3)] In round $k+2$, if there exists a subset $\{a_1,\ldots,a_k\}\subseteq\{m_1,\ldots,m_{k+1}\}$ of size $k$ such that $a_1+\cdots+a_k\in\{k^2-k+1,\ldots,k^2+2k\}\backslash\{b_1,\ldots,b_{k+1}\}$, then set $m_{k+2}=a_1+\cdots+a_k$. Otherwise, set $m_{k+2}=2k+1$, and then, in round $k+3$, set $m_{k+3}=a_1+\cdots+a_k$ where $\{a_1,\ldots,a_k\}\subseteq\{m_1,\ldots,m_{k+2}\}$ has size $k$ with $a_1+\cdots+a_k\in\{k^2-k+1,\ldots,k^2+2k\}\backslash\{b_1,\ldots,b_{k+2}\}$.
\end{itemize}
In Step (3), Maker wins for the first case. So it remains to show that if no subset $\{a_1,\ldots,a_k\}\subseteq\{m_1,\ldots,m_{k+1}\}$ of size $k$ satisfies $a_1+\cdots+a_k\in\{k^2-k+1,\ldots,k^2+2k\}\backslash\{b_1,\ldots,b_{k+1}\}$, then Maker can set $m_{k+2}=2k+1$ in round $k+2$ and there exists a subset $\{a_1,...,a_k\}\subseteq\{m_1,\ldots,m_{k+2}\}$ of size $k$ such that $a_1+\cdots+a_k\in\{k^2-k+1,\ldots,k^2+2k\}\backslash\{b_1,\ldots,b_{k+2}\}$.

Suppose, at the beginning of round $k+2$, no subset $\{a_1,\ldots,a_k\}\subseteq\{m_1,\ldots,m_{k+1}\}$ of size $k$ satisfies $a_1+\cdots+a_k\in\{k^2-k+1,\ldots,k^2+2k\}\backslash\{b_1,\ldots,b_{k+1}\}$. First note that by Maker's strategy, for all $i=2,\ldots,k+1$, $m_i=2(i-1)$ or $2(i-1)+1$. So for all subsets $\{a_1,\ldots,a_k\}\subseteq\{m_1,\ldots,m_{k+1}\}$ of size $k$, we have
\[
a_1+\cdots+a_k\geq1+2+4+\cdots+2(k-1)=k^2-k+1
\]
and
\[
a_1+\cdots+a_k\leq 3+5+\cdots+2k+1=(k+1)^2-1=k^2+2k.
\]
So if no subset $\{a_1,\ldots,a_k\}\subseteq\{m_1,\ldots,m_{k+1}\}$ of size $k$ satisfies $a_1+\cdots+a_k\in\{k^2-k+1,\ldots,k^2+2k\}\backslash\{b_1,\ldots,b_{k+1}\}$, then $b_1,\ldots,b_{k+1}\notin\{1,\ldots,2k+1\}$. Now according to Maker's strategy, we have, $m_1=1$, and $m_i=2(i-1)$ for all $i=2,\ldots,k+1$. This implies that at the beginning of round $k+2$, $2k+1$ is available to Maker and hence Maker can set $m_{k+2}=2k+1$. At the same time, for all subsets $\{a_1,\ldots,a_k\}\subseteq\{m_1,\ldots,m_{k+1}\}$ of size $k$, we have $a_1+\cdots+a_k\leq 2+4+\cdots+2k=k^2+k$ and hence $b_1,\ldots,b_{k+1}\leq k^2+k$. By setting $m_{k+2}=2k+1$, there are at least two subsets of $\{m_1,\ldots,m_{k+2}\}$ of size $k$ whose sum is greater than $k^2+k$. They are $\{2,4,\ldots,2(k-1),2k+1\}$ and $\{2,4,\ldots,2(k-2),2k,2k+1\}$. The first subset sums to $k^2+k+1<k^2+2k$ and the second one sums to $k^2+k+3<k^2+2k$. Since Breaker can only occupy one of them in round $k+2$, there exists a subset $\{a_1,\ldots,a_k\}\subseteq\{m_1,...,m_{k+2}\}$ of size $k$ such that $a_1+\cdots+a_k\in\{k^2-k+1,\ldots,k^2+2k\}\backslash\{b_1,\ldots,b_{k+2}\}$. This proves that Maker wins the $G^*(A,x_1+\cdots+x_k=y)$ game.
\end{proof}

Let $k\geq4$ be an integer and let $A:=\{1,\ldots,2k+1\}\cup\{k^2-k+1,\ldots,k^2+2k\}$. By \cref{Theorem:GameBrownRodl,Lemma:Power-1Distinct}, we have
\[
\begin{split}
f^*(k,-1)\leq&\text{lcm}\{n:n\in A\}\\\leq&\text{lcm}\{1,...,2k+1\}\text{lcm}\{k^2-k+1,...,k^2+2k\}\\\leq&\text{lcm}\{1,...,2k+1\}(k^2+2k)^{3k}\\=&e^{(2+o_k(1))k}e^{3k\log(k^2+2k)}.
\end{split}
\]
Hence we have $f^*(k,-1)=\exp(O_k(k\log k))$.

\subsection{Remarks} By exhaustive search, we are able to find the exact value of $f^*(k,-1)$ for $k=2$.

\begin{proposition}
$f^*(2,-1)=36.$
\end{proposition}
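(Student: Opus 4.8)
The plan is to reduce the proposition to a small finite game-tree computation. First I would enumerate every solution of $\frac{1}{x_1}+\frac{1}{x_2}=\frac{1}{y}$ with $x_1\neq x_2$ lying in $[36]$. Writing $x_1<x_2$, one has $y<x_1<2y<x_2$ and $x_1-y$ divides $y^2$ (since $x_1y=(x_1-y)y+y^2$), so $x_2\le 36$ forces $y\le 17$; running over $y$ and the divisors of $y^2$ below $y$, exactly the fifteen unordered triples
\[
\{2,3,6\},\ \{3,4,12\},\ \{4,5,20\},\ \{4,6,12\},\ \{5,6,30\},\ \{6,8,24\},\ \{6,9,18\},\ \{6,10,15\},
\]
\[
\{8,12,24\},\ \{9,12,36\},\ \{10,14,35\},\ \{10,15,30\},\ \{12,18,36\},\ \{12,20,30\},\ \{12,21,28\}
\]
arise, and exactly the two containing $36$ disappear when the board is $[35]$, leaving thirteen. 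Since a number lying in no solution never helps either player, it suffices to analyze the games played on the nineteen (resp.\ eighteen) numbers occurring above.

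For the upper bound $f^*(2,-1)\le 36$ I would give an explicit Maker strategy. Maker opens with $12$, which lies in $\{3,4,12\}$, $\{4,6,12\}$, $\{9,12,36\}$, and $\{12,18,36\}$ (among others). If Breaker's first move is not one of $9,18,36$, then Maker takes $36$ and simultaneously threatens to complete $\{9,12,36\}$ (by taking $9$) and $\{12,18,36\}$ (by taking $18$); Breaker blocks only one of $9,18$, so Maker wins on the third move. If Breaker's first move is one of $9,18,36$, then it is not one of $3,4,6$, so Maker takes $4$ and threatens to complete $\{3,4,12\}$ (by $3$) and $\{4,6,12\}$ (by $6$), again winning on the third move.

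For the lower bound $f^*(2,-1)>35$ I would run a minimax search of the Maker--Breaker game on the eighteen relevant numbers of $[35]$ and certify that the opening position is a Breaker win; nearly every branch is decided as soon as some player has created a double threat or hit every solution, so this is a straightforward computation (and could be done by hand with careful bookkeeping). I expect this direction to be the crux. Unlike the other Breaker victories in the paper, no pairing strategy works here: if a system of disjoint pairs met all of $\{3,4,12\},\{4,5,20\},\{4,6,12\},\{12,20,30\}$, then the pair inside $\{4,5,20\}$ would have to be $\{5,20\}$ (otherwise it uses $4$, and then $\{3,4,12\},\{4,6,12\}$ would have to be covered by the non-disjoint pairs $\{3,12\},\{6,12\}$), whence $\{12,20,30\}$ is covered by $\{12,30\}$, whence $\{3,4,12\}$ and $\{4,6,12\}$ must each be covered by a pair inside $\{3,4,6\}$ containing $4$---impossible for two disjoint pairs. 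Hence Breaker must play adaptively: for instance Breaker replies $12$ to the opening $6$, replies $4$ to the opening $12$, and against other openings replies $12$ and then follows a pairing of the eight $12$-free solutions; but a few openings (notably $10$) force Breaker's reply to depend on Maker's second move as well, so I would present the full case analysis, if at all, only as a supplement to the verified search.
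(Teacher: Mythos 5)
Your enumeration of the fifteen solutions in $[36]$ and thirteen in $[35]$ is correct, and your upper-bound strategy is essentially identical to the paper's: open with $12$, then move to whichever of $4$ or $36$ still has both of its "children" ($3,6$, resp.\ $9,18$) untouched, creating a double threat. That half is complete.

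The lower bound is where your proposal has a genuine gap: "I would run a minimax search \ldots\ and certify that the opening position is a Breaker win" is a plan, not a proof, and the adaptive strategy you sketch afterwards is, by your own admission, incomplete (indeed, replying $12$ to the opening $10$ loses outright: Maker answers $15$ and double-threatens $\{6,10,15\}$ and $\{10,15,30\}$). As written, $f^*(2,-1)>35$ is not established. That said, your observation that \emph{no} global pairing strategy exists is correct and valuable: the four triples $\{3,4,12\}$, $\{4,5,20\}$, $\{4,6,12\}$, $\{12,20,30\}$ cannot all contain pairs from a disjoint system, by exactly the case analysis you give. This actually exposes a flaw in the paper's own argument, whose listed pairs $\{5,20\}$, $\{6,30\}$, $\{20,30\}$ are not mutually disjoint and hence do not constitute a pairing strategy in the sense of the paper's own definition. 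So Breaker's win on $[35]$ genuinely requires an adaptive first move followed by a position-dependent pairing (your replies $12$ to the opening $6$ and $4$ to the opening $12$ do each admit such a pairing afterwards), or a verified exhaustive search; either way, the case analysis you defer is the actual content of the lower bound and must be supplied before the proposition can be considered proved.
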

\begin{proof}
We first show that Maker wins the $G^*([36],2,-1)$ game. Consider the following solutions to $1/x_1+1/x_2=1/y$ in $\{1,2,\ldots,36\}$ with $x_1\neq x_2$: $(x_1,x_2,y)=(4,12,3)$, $(6,12,4)$, $(12,36,9)$, and $(18,36,12)$. We construct a rooted binary tree using these solutions as follows:

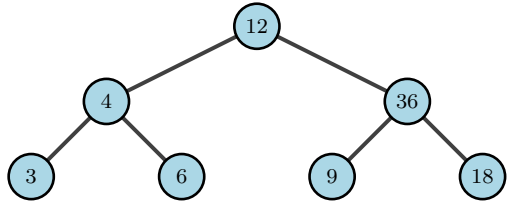
\begin{figure}[H]
\centering
\begin{tikzpicture}
\Vertex[x=0,y=0,label=$3$]{1};
\Vertex[x=2,y=0,label=$6$]{2};
\Vertex[x=4,y=0,label=$9$]{3};
\Vertex[x=6,y=0,label=$18$]{4};
\Vertex[x=1,y=1,label=$4$]{5};
\Vertex[x=5,y=1,label=$36$]{6};
\Vertex[x=3,y=2,label=$12$]{7}
\Edge(7)(5);
\Edge(7)(6);
\Edge(5)(1);
\Edge(5)(2);
\Edge(6)(3);
\Edge(6)(4);
\end{tikzpicture}
\caption{Rooted Binary Tree for Solutions to $1/x_1+1/x_2=1/y$}\label{Figure:Tree}
\end{figure}

In \cref{Figure:Tree}, each path from the root 12 to a leaf is a solution set to $1/x_1+1/x_2=1/y$. It is easy to see that Maker can win this game by doing the following:
\begin{itemize}
\item[(1)] Maker selects the root in round 1.
\item[(2)] In round 2, Maker selects a vertex that is adjacent to the root such that both of its children are untouched by Breaker.
\item[(3)] In round 3, Maker chooses a child of the vertex that Maker selected in round 2.
\end{itemize}

Now we show that Breaker wins the $G^*([35],2,-1)$ game. One can check that there are 13 solutions to $1/x_1+1/x_2=1/y$ in $\{1,2,\ldots,35\}$: $\{2,3,6\}$, $\{3,4,12\}$, $\{4,6,12\}$, $\{4,5,20\}$, $\{5,6,30\}$, $\{6,8,24\}$, $\{6,9,18\}$, $\{6,10,15\}$, $\{8,12,24\}$, $\{10,14,35\}$, $\{10,15,30\}$, $\{12,20,30\}$, and $\{12,21,28\}$. Breaker wins the game using the pairing strategy over $\{4,12\}$, $\{8,24\}$, $\{10,15\}$, $\{2,3\}$, $\{5,20\}$, $\{6,30\}$, $\{9,18\}$, $\{14,35\}$, $\{20,30\}$, and $\{21,28\}$.
\end{proof}

For general $k$, \cref{Theorem:Main-NegativePower-Distinct} only provides an upper bound for $f^*(k,-1)$. It is trivially true that $f^*(k,-1)\geq2k+1$ because Maker needs to occupy at least $k+1$ numbers to win. However, we don't have a nontrivial lower bound.

\begin{oproblem}
Find a nontrivial lower bound for $f^*\left(k,-1\right)$.
\end{oproblem}
\section{Equations with Arbitrary Coefficients}\label{Section:AribitraryCoefficients}
In this section, we briefly discuss the Maker-Breaker Rado games for the equation
\begin{equation}\label{Equation:AribtraryCoefficients}
a_1x_1+\cdots+a_kx_k=y,
\end{equation}
where $k,a_1,\ldots,a_k$ are positive integers with $k\geq2$ and $a_1\geq a_2\geq\cdots\geq a_k$. Write $w:=a_1+\cdots+a_k$, and $w^*:=\sum_{i=1}^k(2i-1)a_i$. Let $f(a_1,\ldots,a_k;y)$ be the smallest positive integer $n$ such that Maker wins the $G([n],a_1x_1+\cdots+a_kx_k=y)$ game and let $f^*(a_1,\ldots,a_k;y)$ be the smallest positive integer $n$ such that Maker wins the $G^*([n],a_1x_1+\cdots+a_kx_k=y)$ game. 

Hopkins and Schaal \cite{HopkinsSchaal2005}, and Guo and Sun \cite{GuoSun2008} proved that if $\{1,2,\ldots,a_kw^2+w-a_k\}$ is partitioned into two classes, then one of them contains a solution to \cref{Equation:AribtraryCoefficients} with $x_1,\ldots,x_k$ not necessarily distinct; and there exists a partition of $\{1,2,\ldots,a_kw^2+w-a_k-1\}$ into two classes such that neither contains a solution to \cref{Equation:AribtraryCoefficients} with $x_1,\ldots,x_k$ not necessarily distinct. By these results and strategy stealing, we have $f(a_1,\ldots,a_k;y)\leq a_kw^2+w-a_k$. The strategy stealing argument here is similar to the one in \cref{Section:Introduction} where we explained that $f(k,\ell)\leq R(k,\ell)$ and $f^*(k,\ell)\leq R^*(k,\ell)$. The next theorem shows that, in fact, $f(a_1,\ldots,a_k;y)$ is much smaller than $a_kw^2+w-a_k$.

\begin{theorem}\label{Theorem:AritraryCoefficients-NonDistinct}
For all integers $k\geq2$, we have $w+2a_k\leq f(a_1,\ldots,a_k;y)\leq w+a_{k-1}+a_k$.
\end{theorem}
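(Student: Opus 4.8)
The plan is to reduce the whole game to a short, almost forced play centred on the number $1$ and on the ``all-ones'' value $w=a_1+\cdots+a_k$. This mirrors \cref{Lemma:Power1-NonDistinct}, the special case $a_1=\cdots=a_k=1$ in which the two bounds coincide at $k+2$; here I would first pin down exactly which solutions of $a_1x_1+\cdots+a_kx_k=y$ fit into an interval of the critical length, and then show they are few enough, and close enough to $1$, that the outcome is forced.

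For the lower bound I would show that Breaker wins $G(w+2a_k-1,\,a_1x_1+\cdots+a_kx_k=y)$. Writing $x_i=1+t_i$ with $t_i\ge 0$, any solution inside $\{1,\dots,w+2a_k-1\}$ has $\sum_i a_it_i=y-w\le 2a_k-1$, and since $a_i\ge a_k$ for every $i$ this forces $a_k\sum_i t_i\le 2a_k-1$, hence $\sum_i t_i\le 1$. So the only solutions in this range are $(1,\dots,1,w)$, requiring Maker to own $\{1,w\}$, and, for each index $j$ with $a_j\le 2a_k-1$, the solution with $x_j=2$ and every other $x_i=1$, requiring Maker to own $\{1,2,w+a_j\}$; in particular every solution in range contains the number $1$, and every solution other than $(1,\dots,1,w)$ also contains $2$. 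Breaker's strategy is then: if Maker's opening move is not $1$, take $1$, after which no solution can ever be completed; if Maker opens with $1$, reply with $w$ (legal since $w\le w+2a_k-1$ and $w\ge 2$), killing $(1,\dots,1,w)$; if Maker does not follow with $2$, take $2$, killing every surviving solution; and if Maker does take $2$, block the remaining level-two threat by taking $w+a_k$.

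For the upper bound I would show that Maker wins $G(w+a_{k-1}+a_k,\,a_1x_1+\cdots+a_kx_k=y)$. Assume first $k\ge 3$ and use the three solutions $(1,\dots,1,w)$, the one with $x_k=2$ (whose $y$-value is $w+a_k$), and the one with $x_{k-1}=x_k=2$ (whose $y$-value is $w+a_{k-1}+a_k$); all three $y$-values lie on the board, and $w\ge k\ge 3$, so $1,2,w$ are distinct. Maker opens with $1$; if $w$ is still free Maker takes it and wins; otherwise Breaker has already spent a move on $w$, so Maker takes $2$, and now $w+a_k$ and $w+a_{k-1}+a_k$ are two distinct still-available numbers (they differ by $a_{k-1}\ge 1$), each of which together with $\{1,2\}$ completes a solution, so on the next move Maker takes whichever of them Breaker has not. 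For $k=2$ the solution $(2,2,2w)$ uses the set $\{2,2w\}$, which no longer contains $1$, so Maker instead opens with $2$: if $2w$ is free Maker takes it and wins via $(2,2,2w)$; otherwise Maker takes $1$, and then the set $\{w,\,2a_1+a_2,\,a_1+2a_2\}$ provides at least two distinct still-available targets, all lying in $\{3,\dots,2w-1\}$ (the degenerate case $w=2$ being $a_1=a_2=1$, where owning $\{1,2\}$ already completes $(1,1,2)$), each completing one of $(1,1,w)$, $(2,1,2a_1+a_2)$, $(1,2,a_1+2a_2)$; Maker finishes on the next move.

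The step I expect to be the main obstacle is the lower-bound branch in which Maker grabs $1$ on the opening move: Breaker is then committed to answering with $w$, and the argument must verify that once Maker secures $\{1,2\}$ the only level-two solution still alive is $\{1,2,w+a_k\}$, so that a single further Breaker move disposes of it — this is precisely where the ordering $a_1\ge a_2\ge\cdots\ge a_k$ and the exact board size $w+2a_k-1$ must be used to the full. On the upper-bound side the only real wrinkle is the separate $k=2$ analysis, which cannot be avoided because for $k=2$ the ``both-twos'' solution $(2,2,2w)$ loses the anchoring number $1$ that drives the argument for $k\ge 3$.
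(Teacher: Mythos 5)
Your upper-bound argument for $k\ge 3$ is exactly the paper's: open with $1$, grab $w$ if Breaker leaves it, otherwise take $2$ and then whichever of $w+a_k$, $w+a_{k-1}+a_k$ survives. Your separate $k=2$ case is unnecessary: the solution $(2,2,2w)$ needs only the set $\{2,2w\}$, so a Maker holding $\{1,2,2w\}$ has still completed it, and the $k\ge3$ strategy goes through verbatim (the paper runs one argument for all $k$, excluding only $k=2$, $a_1=a_2=1$, which is \cref{Lemma:Power1-NonDistinct}). Your enumeration of the solutions in $\{1,\dots,w+2a_k-1\}$ via $x_i=1+t_i$ and $\sum_i t_i\le 1$ is clean and correct.

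The step you flag as ``the main obstacle'' in the lower bound is a genuine gap, and it cannot be closed: your own enumeration shows that for \emph{every} index $j$ with $a_j\le 2a_k-1$ the set $\{1,2,w+a_j\}$ is a live threat, and these targets $w+a_j$ need not coincide. If two distinct values of $a_j$ lie in $[a_k,2a_k-1]$, then after Maker holds $\{1,2\}$ there are two distinct level-two threats, and Breaker, having already spent a move on $w$, cannot cover both with the single move $w+a_k$. Concretely, for $3x_1+2x_2=y$ one has $w=5$, $w+2a_k-1=8$, and the solution sets $\{1,5\}$, $\{1,2,7\}$, $\{1,2,8\}$ all fit in $[8]$; Maker plays $1$, forcing Breaker to $5$, then $2$, and then takes whichever of $7,8$ remains, so $f(3x_1+2x_2=y)\le 8<w+2a_k=9$ and the stated lower bound fails. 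You should be aware that the paper's own proof has the same defect: it asserts that the only solutions in range are $(1,\dots,1,w)$ and $(1,\dots,1,2,w+a_k)$, addressing only the case $a_i=a_k$ (where the extra solutions share the target $w+a_k$) and overlooking coefficients with $a_k<a_i<2a_k$. So the issue is with the theorem's lower bound as stated, not with your reading of it; the bound is correct only under an extra hypothesis such as ``every $a_i$ equals $a_k$ or is at least $2a_k$,'' and in general the pairing/blocking argument only yields a lower bound in which $2a_k$ is replaced by $a_k$ plus the smallest coefficient value other than possibly $a_k$ itself that stays below the board's edge.
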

\begin{proof}
The case that $k=2$ and $a_1=a_2=1$ is a special case of \cref{Lemma:Power1-NonDistinct}. So we assume that $k>2$ or $k=2$ but $a_1\geq2$. Then $w>2$.

We first show that Maker wins the $G([w+a_{k-1}+a_k],a_1x_1+\cdots+a_kx_k=y)$ game. Maker chooses $1$ in round 1. If Breaker does not choose $w$ in round 1, then Maker wins in round 2 by choosing $w$. If Breaker chooses $w$ in round 1, then Maker chooses 2 in round 2 and either $w+a_k$ or $w+a_{k-1}+a_k$ in round 3 to win the game.

Now we show that Breaker wins the $G([w+2a_k-1],a_1x_1+\cdots+a_kx_k=y)$ game. The only solutions to \cref{Equation:AribtraryCoefficients} in $\{1,2,\ldots,w+2a_k-1\}$ are 
\[
(x_1,x_2,\ldots,x_{k-1},x_k,y)=(1,1,\ldots,1,1,w)\] and \[(x_1,x_2,\ldots,x_{k-1},x_k,y)=(1,1,\ldots,1,2,w+a_k).\]
Now Breaker wins by the pairing strategy over $\{1,w\}$ and $\{2,w+a_k\}$. Note that if $a_i=a_k$ for some $i\in\{1,2,\ldots,k-1\}$, then $(x_1,\ldots,x_{i-1},x_i,x_{i+1},\ldots,x_k,y)=(1,\ldots,1,2,1,\ldots,1,w+a_1)$ is also a solution, but Breaker can still win the game by the pairing strategy becuase $w+a_i=w+a_k$.
\end{proof}
The next theorem provides lower and upper bounds for $f^*(a_1,\ldots,a_k;y)$.

\begin{theorem}\label{Theorem:AribitraryCoeffients-Distinct}
For all integers $k\geq4$, we have
\[
w^*\leq f^*(a_1,\ldots,a_k;y)\leq w^*+(2k-2)(a_1-a_k)+(k+3)a_{k-2}.
\]
\end{theorem}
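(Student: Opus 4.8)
\textbf{Plan of proof for \cref{Theorem:AribitraryCoeffients-Distinct}.}

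The plan is to mirror the structure of the proof of \cref{Theorem:Main-PositivePower-Distinct} (that is, \cref{Lemma:Power1Distinct-Upper} and \cref{Lemma:Power1Distinct-Lower}), but carrying the coefficients $a_1,\ldots,a_k$ through the bookkeeping. For the lower bound $w^*\leq f^*(a_1x_1+\cdots+a_kx_k=y)$, I would show that Breaker wins the $G^*(w^*-1,a_1x_1+\cdots+a_kx_k=y)$ game. The key point is that the smallest possible $k$-sum $a_1\alpha_1+\cdots+a_k\alpha_k$ over distinct $\alpha_i$ is minimized by pairing the largest coefficient with the smallest integer, i.e. it equals $\sum_{i=1}^k(2i-1)a_i=w^*$ only if Maker's first $k$ claimed numbers are exactly $1,3,5,\ldots,2k-1$ \emph{assigned in the right order}; more precisely, Breaker plays ``take the smallest available number'' for the first $k-1$ rounds, forcing $a_i\geq 2i-1$ for Maker's $i$-th smallest claimed number (as in Claim 1 of \cref{Lemma:Power1Distinct-Lower}), and then argues that any $k$-sum not using all of Maker's $k-1$ smallest numbers already exceeds $w^*-1$. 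A short case analysis on the deficiency $A-(w^*-\text{something})$, together with a pairing strategy over the few remaining winning sets (each of the form ``fixed small part plus one large value''), finishes the lower bound exactly as in Cases 1--3 there; one must check the relevant pairing sets are pairwise disjoint, which holds because the large values are all distinct and lie above $2k$.

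For the upper bound, I would have Maker follow the strategy from \cref{Lemma:Power1Distinct-Upper}: if a completing $k$-sum is available, take it; otherwise take the smallest available number. The same argument gives $m_i\leq 2i-1$ for $i\leq k$, so Maker's minimal $k$-sum is at most $w^*$, hence at most $n$. If Breaker has not blocked $\sum_i a_i m_i$ in the first $k$ rounds Maker wins in round $k+1$; otherwise Maker plays on into rounds $k+1,k+2$ (and possibly $k+3$). The extra slack $(2k-2)(a_1-a_k)+(k+3)a_{k-2}$ is exactly what is needed to bound, over the $\binom{k+1}{k}=k+1$ or $\binom{k+2}{k}$ many $k$-sums available once Maker has $k+1$ or $k+2$ numbers, the value of the smallest $k$-sum that Breaker has \emph{not} yet claimed. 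Concretely: when one swaps one of Maker's $k$ chosen numbers for the $(k+1)$-st, the value of the $k$-sum changes by a coefficient difference times an integer difference, and the worst case is governed by $a_1-a_k$ (largest coefficient gap) times the gap $\leq 2k-2$ between the dropped and added integers, with the $(k+3)a_{k-2}$ term absorbing the round-$k+3$ contingency where Breaker has blocked a near-maximal collection of $k$-sums. I would organize this as two lemmas and conclude by combining them.

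The main obstacle I anticipate is the upper bound's round-$(k+2)$/round-$(k+3)$ case analysis: one must show that after Breaker blocks up to $k$ (or $k+1$) of the available $k$-sums, there is always an unblocked $k$-sum of value at most $w^*+(2k-2)(a_1-a_k)+(k+3)a_{k-2}$, and getting the constant exactly right requires carefully choosing which two ``large'' $k$-sums Maker threatens so their values are close together, analogous to the pair
\[
\sum_{i=2}^{k}m_i+m_{k+2}\quad\text{and}\quad m_{k+1}+m_{k+2}+\sum_{i=2}^{k-1}m_i
\]
used at the end of \cref{Lemma:Power1Distinct-Upper}, but now weighted by the $a_i$'s in the order that keeps the large coefficients on the small, forced integers. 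The linear-independence machinery (\cref{Theorem:LinearIndependence}, \cref{Corollary:SolutionsFractional}) is not needed here since $\ell=1$; the difficulty is purely the combinatorial optimization of weighted $k$-sums under an adversary.
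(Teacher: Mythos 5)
Your lower bound plan is correct but over-engineered. Since the board is $\{1,\ldots,w^*-1\}$, Breaker only needs the ``smallest available number'' strategy: it forces Maker's $i$-th smallest number to be at least $2i-1$, and then the rearrangement inequality gives that \emph{every} $k$-sum is at least $\sum_{i=1}^k(2i-1)a_i=w^*$, which already lies off the board. No pairing strategy, no case analysis on a deficiency, and no analogue of Cases 1--3 of \cref{Lemma:Power1Distinct-Lower} is needed; that extra machinery was only required there because the board extended two units \emph{above} the minimal $k$-sum, whereas here it stops one unit below it.

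The upper bound plan has a genuine gap: you never say where the supply of \emph{distinct} available $k$-sums below $W$ comes from, and your count $\binom{k+1}{k}=k+1$ (treating a $k$-sum as determined by a subset) is the wrong object once the $a_i$ are not all equal --- the same subset yields many values depending on which number is matched with which coefficient, and conversely distinct subsets need not be what you want. The paper's argument hinges on exactly this point and splits into two cases. If $a_1=a_k$ the equation is $c(x_1+\cdots+x_k)=y$ and one simply rescales \cref{Lemma:Power1Distinct-Upper}. If $a_1>a_k$, Maker takes the smallest available number for $k+1$ rounds (so $m_i\le 2i-1$), picks $t$ with $a_t<a_{t-1}$, and by transposing two positions in the assignment $i\mapsto m_i$ produces $k$ pairwise distinct $k$-sums using only $m_1,\ldots,m_k$, plus two more involving $m_{k+1}$; each transposition changes the value by (coefficient gap)$\times$(integer gap), and the largest of these $k+2$ values is exactly $w^*+(2k-2)(a_1-a_k)+(k+3)a_{k-2}=W$. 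Since Breaker holds only $k+1$ numbers at the start of round $k+2$, one of these $k+2$ values is free and Maker wins then --- there is no round-$(k+3)$ contingency in this case, and none of the delicate blocking analysis of \cref{Lemma:Power1Distinct-Upper} survives with weights. Without the equal-versus-unequal dichotomy and the explicit transposition construction, the specific constant in the statement cannot be verified from your outline.
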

\begin{proof}
Let $k\geq4$ be an integer and write $W=w^*+(2k-2)(a_1-a_k)+(k+3)a_{k-2}$. We first show that Breaker wins the $G^*([w^*-1],a_1x_1+\cdots+a_kx_k=y)$ game by choosing the smallest number available each round. Suppose, for a contradiction, that Maker wins. Let $\alpha_1\leq\alpha_2\leq\cdots\leq\alpha_s$, where $s\geq k+1$, be the numbers chosen by Maker after winning the game. Then by Breaker's strategy, we have $\alpha_i\geq 2i-1$ for all $i=1,2,\ldots,k$. By the rearrangement inequality \cite{HLP1952}, the smallest $k$-sum is
\[
\sum_{i=1}^k a_i\alpha_i\geq\sum_{i=1}^k(2i-1)a_i=w^*
\]
which is a contradiction.

Now we show that Maker wins the $G^*([W],a_1x_1+\cdots+a_kx_k=y)$ game. We split it into two cases.

Case 1: $\alpha_1=\alpha_k=c$ for some $c$. Since the coefficients of $x_1,\ldots,x_k$ are the same, Maker's strategy defined in the proof of \cref{Lemma:Power1Distinct-Upper} still applies by multiplying the $k$-sums in the proof of \cref{Lemma:Power1Distinct-Upper} by $c$. So Maker wins the $G^*([ck^2+3c],a_1x_1+\cdots+a_kx_k=y)$ game. Since
\[
W=w^*+(2k-2)(a_1-a_k)+(k+3)a_{k-2}=ck^2+ck+3c>ck^2+3c,
\]
Maker wins the $G^*([W],a_1x_1+\cdots+a_kx_k=y)$ game.

Case 2: $a_1>a_k$. We will show that Maker wins the game with the following strategy: 
\begin{itemize}
    \item[(1)] Maker chooses the smallest number available each round for the first $k+1$ rounds;
    \item[(2)] and then chooses an available $k$-sum in round $k+2$.
\end{itemize}

For $i=1,2,\ldots,k+1$, let $m_i$ be the number chosen by Maker in round $i$. Then by Maker's strategy, we have $i\leq m_i\leq2i-1$ for all $i=1,2,\ldots,k+1$.

Since $a_1>a_k$, there exists $t\in\{2,3,\ldots,k\}$ such that $\alpha_t<\alpha_{t-1}$. For $i=1,\ldots,k+1$, let $m_i$ be the number chosen by Maker in round $i$. By the rearrangement inequality, we have the following $k$ distinct $k$-sums involving only $m_1,\ldots,m_k$:
\[
(a_tm_{t+j}+a_{t+j}m_t)-(a_tm_t+a_{t+j}m_{t+j})+\sum_{i=1}^ka_im_i,\text{ where } j=0,1,\ldots,k-t
\]
and
\[
(a_{t-j'}m_k+a_km_{t-j'})-(a_{t-j'}m_{t-j'}+a_km_k)+\sum_{i=1}^ka_im_i, \text{ where }j'=1,2,\ldots,t-1.
\]

Among these distinct $k$-sums, the smallest is $\sum_{i=1}^ka_im_i$ and the largest is
\begin{equation}\label{Equaion:Long}
(a_1m_k+a_km_1)-(a_1m_1+a_km_k)+\sum_{i=1}^ka_im_i=a_1m_k+\left(\sum_{i=2}^{k-1}a_im_i\right)+a_km_1.
\end{equation}

Since $k\geq4$, there are two terms of the form $a_im_i$, $i\in\{2,\ldots,k-1\}$, in the middle of the right hand side of \cref{Equaion:Long}. Replacing $m_{k-1}$ with $m_{k+1}$ and replacing $m_{k-2}$ with $m_{k+1}$, we get two larger and distinct $k$-sums:
\[
a_1m_k+\left(\sum_{i=2}^{k-2}a_im_i\right)+a_{k-1}m_{k+1}+a_km_1
\]
and
\[
a_1m_k+\left(\sum_{i=2}^{k-3}a_im_i\right)+a_{k-2}m_{k+1}+a_{k-1}m_{k-1}+a_km_1.
\]
The largest of these $k$-sums is
\[
\begin{split}
&a_1m_k+\left(\sum_{i=2}^{k-3}a_im_i\right)+a_{k-2}m_{k+1}+a_{k-1}m_{k-1}+a_km_1\\=&a_1m_k++a_{k-2}m_{k+1}+a_km_1-a_1m_1-a_{k-2}m_{k-2}-a_km_k+\sum_{i=1}^ka_im_i\\=&(m_k-m_1)(a_1-a_k)+a_{k-2}(m_{k+1}-m_{k-2})+\sum_{i=1}^ka_im_i\\\leq&w^*+[(2k-1)-1](a_1-a_k)+[2k+1-(k-2)]a_{k-2}\\=&w^*+(2k-2)(a_1-a_k)+(k+3)a_{k-2}=W.
\end{split}
\]
So there exists a $k$-sum unoccupied by Breaker in the beginning of round $k+2$ and hence Maker wins the $G^*([W],a_1x_1+\cdots+a_kx_k=y)$ game by choosing the available $k$-sum in round $k+2$.
\end{proof}
The bounds in \cref{Theorem:AribitraryCoeffients-Distinct} can be optimized using the technique in the proofs of \cref{Lemma:Power1Distinct-Upper,Lemma:Power1Distinct-Lower}, but we don't attempt it here.

\section{Concluding Remarks}\label{Section:Conclusion}
It would be interesting to study Rado games for other well-studied equations in arithmetic Ramsey theory. One direction is to study Rado games for
\begin{equation}
a_1x_1^{1/\ell}+\cdots+a_kx_k^{1/\ell}=y^{1/\ell},
\end{equation}
where $\ell,k,a_1,\ldots,a_k$ are positive integers with $k\geq2$ and $\ell\neq0$. We studied the $G([n],a_1x_1+\cdots+a_kx_k=y)$ and $G^*([n],a_1x_1+\cdots+a_kx_k=y)$ games in \cref{Section:AribitraryCoefficients}, but how the fractional power $1/\ell$ interacts with the coefficients $a_1,\ldots,a_k$ is yet unknown.
\begin{oproblem}
What is the smallest integer $n$ such that Maker wins the $G([n],a_1x_1^{1/\ell}+\cdots+a_kx_k^{1/\ell}=y^{1/\ell})$ game for $\ell\in\mathbb{Z}\backslash\{-1,0,1\}$? And what is the smallest integer $n$ such that Maker wins the $G^*([n],a_1x_1^{1/\ell}+\cdots+a_kx_k^{1/\ell}=y^{1/\ell})$ game for $\ell\in\mathbb{Z}\backslash\{-1,0,1\}$?
\end{oproblem}

Another direction is to study Rado games for the equation
\begin{equation}\label{Equation:PowerMoreThan1}
x_1^\ell+\cdots+x_k^\ell=y^\ell
\end{equation}
where $\ell\in\mathbb{Z}\backslash\{-1,0,1\}$ and $k\in\mathbb{N}\backslash\{1\}$. In 2016, Heule, Kullmann, and Marek \cite{HKM2016} verified that if $\{1,2,\ldots,7825\}$ is partitioned into two classes, then one of them contains a solution to \cref{Equation:PowerMoreThan1} with $k=\ell=2$ and that there exists a  partition of $\{1,2,\ldots,7824\}$ into two classes so that neither contains a solution to \cref{Equation:PowerMoreThan1} with $k=\ell=2$. It is easy to see that if $a_1,a_2,b\in\mathbb{N}$ with $a_1^2+a_2^2=b^2$, then $a_1\neq a_2$. So the result of Heule, Kullmann, and Marek implies that Maker wins both the $G([7825],x_1^2+x_2^2=y^2)$ game and the $G^*([7825],x_1^2+x_2^2=y^2)$ game. It would be interesting to see if Maker can do better.

\begin{oproblem}
Does there exist $n<7825$ such that Maker wins the $G^*([n],x_1^2+x_2^2=y^2)$ game?
\end{oproblem}

The situation for Maker is more complicated when $\ell\geq3$. By Fermat's last theorem \cite{Wiles1995}, for all $n,\ell\in\mathbb{N}$ with $\ell\geq3$, Breaker wins both the $G([n],x_1^\ell+x_2^\ell=y^\ell)$ game and the $G^*([n],x_1^\ell+x_2^\ell=y^\ell)$ for $\ell\geq3$. By homogeneity, Breaker also wins the $G([n],x_1^\ell+x_2^\ell=y^\ell)$ game and the $G^*([n],x_1^\ell+x_2^\ell=y^\ell)$ game for all $n\in\mathbb{N}$ and $\ell\leq-3$. Hence, in order to study Rado games for \cref{Equation:PowerMoreThan1}, one needs extra conditions on $k$ and $\ell$ to make sure there are solutions to \cref{Equation:PowerMoreThan1} in $\mathbb{N}$. Recently, Chow, Lindqvist, and Prendiville \cite{CLP2021} proved that, for all $\ell\in\mathbb{N}$, there exists $k_0\in\mathbb{N}$ such that for all $k\geq k_0$, if we partition of $\mathbb{N}$ into two classes, then one of them contains a solution to \cref{Equation:PowerMoreThan1} with $x_1,\ldots,x_k$ not necessarily distinct. By the result of Brown and R\"{o}dl \cite{BrownRodl1991} described in \cref{Section:Introduction}, the same result holds for $\ell\in\{-1,-2,\ldots\}$ as well. For example, if $|\ell|=2$, then $k=4$ suffices; and if $|\ell|=3$, then $k=7$ is enough.

\section*{Acknowledgements}
The authors would like to thank the referee for their suggestions which helped improve the presentation of the paper.

\end{document}